\newtheorem{Rem}{Remark}
\newtheorem{sats}{Theorem}
\newtheorem{prop}{Proposition}
\newtheorem{lem}{Lemma}
\newtheorem{kor}{Corollary}
\newcommand{\banm}{\begin{anm}}
\newcommand{\eanm}{\end{anm}}
\newcommand{\loc}{{\rm loc}}
\begin{document}

\title
{The Dirichlet problem \\
for  non-divergence parabolic equations\\
with discontinuous in time coefficients\\ in a wedge
}
\author
{Vladimir Kozlov, Alexander Nazarov}

\date{}
\maketitle
\begin{abstract} We consider the Dirichlet problem in a wedge for parabolic
equation whose coefficients are measurable function of $t$. We
obtain coercive estimates in weighted $L_{p,q}$-spaces. The concept
of ``critical exponent'' introduced in the paper plays here the
crucial role. Various important properties of the critical exponent
are proved. We give applications to the Dirichlet problem for linear
and quasi-linear non-divergence parabolic equations with
discontinuous in time coefficients in cylinders $\Omega\times
(0,T)$, where $\Omega$ is a bounded domain with an edge or with a
conical point.
\end{abstract}

\section{Introduction}

Consider the parabolic equation
\begin{equation}\label{Jan1}
{\cal L}u=\partial_tu(x;t)-A_{ij}(t)D_iD_ju(x;t)=f(x;t).
\end{equation}
Here and elsewhere $D_i$ denotes the operator of differentiation with respect to $x_i$;
in particular, $Du=(D_1u, \dots, D_nu)$ is the gradient of $u$. By $\partial_tu$ we denote
the derivative of $u$ with respect to $t$.

The only assumptions
about the coefficients in (\ref{Jan1}) is that $A_{ij}$ are real
valued measurable functions of $t$ such that\footnote{We may suppose
also $A_{ij}\equiv A_{ji}$ without loss of generality.}
\begin{equation}\label{0.3}
\nu|\xi|^2\le A_{ij}(t)\xi_i\xi_j\le \nu^{-1}|\xi|^2, \qquad
\xi\in{\mathbb R}^n, \quad \nu=const>0.
\end{equation}

In \cite{Kr2}, \cite{Kr} it was shown by Krylov that for coercive
estimates of $\partial_tu$ and $D(Du)$ one needs no smoothness
assumptions on coefficients $A_{ij}$ with respect to $t$. The only
assumption which is needed is estimate (\ref{0.3}). Solvability
results for the whole space $\mathbb R^n\times\mathbb R$ for
equation (\ref{Jan1}) in $L_{p,q}$ spaces were proved in \cite{Kr2};
solvability of the Dirichlet problem in the half-space $\mathbb
R^n_+\times\mathbb R$ was established in weighted $L_{p,q}$ spaces
by Krylov \cite{Kr} for particular range of weights and by the
authors \cite{KN} for the whole range of weights.\medskip

This paper addresses solvability results for the Dirichlet problem
to (\ref{Jan1}) in the wedge. Namely, let $K$ be a cone in $\mathbb
R^m$, $2\le m\le n$, and let ${\cal K}=K\times\mathbb R^{n-m}$. We
assume that $\omega=K\cap {\mathbb S}^{m-1}$ is of class ${\cal
C}^{1,1}$. We underline that the case $m=n$ where ${\cal K}=K$ is
not excluded. We are looking for solutions to the following
Dirichlet problem
\begin{equation}\label{Jan1a}
{\cal L}u=f\quad\mbox{in}\quad {\cal K}\times \mathbb R;\qquad
u=0\quad\mbox{on}\quad \partial {\cal K}\times \mathbb R.
\end{equation}

When the coefficients are independent on $t$, solvability results in weighted
$L_2$ spaces and even asymptotic decomposition for solutions of problem
(\ref{Jan1a}) in the case of the cone ($n=m$) were obtained in \cite{KM1}-\cite{K0}.
In the case of the dihedral angle ($m=2$) weighted $L_p$-coercive estimates were proved
in \cite{Sol} (see also \cite{Sol1} where solvability in H\"older classes was
established); for arbitrary wedge these estimates and corresponding estimates in
anisotropic spaces were established in \cite{Na}. In the constant coefficient case
by using a change of variables the elliptic part of the operator in (\ref{Jan1})
can be reduced to the Laplacian, for which the interval of the admissible weights
in the coercive estimates is described in terms of the first positive eigenvalue
$\lambda_{\cal D}$ of a quadratic operator pencil associated with the Beltrami-Laplacian
on $\omega$, see \cite{K0}, \cite{Na}. If coefficients depend on $t$ the above reduction
is impossible. We define the critical exponents $\lambda_c^{\pm}$ of ${\cal L}$ in ${\cal K}$,
which can be considered as a generalization of $\lambda_{\cal D}$ onto the case when coefficients
depend on $t$. This critical exponents coincide with $\lambda_{\cal D}$ when
$A_{ij}(t)\equiv\delta_{ij}$, i.e. (\ref{Jan1}) is the heat equation. Other important properties of
$\lambda_c^{\pm}$ are established in Theorem \ref{lambda}.

 In order to formulate our main result let us introduce two classes of anisotropic spaces.
For $1<p,q<\infty$ we define $L_{p,q}=L_{p,q}({\cal K}\times \mathbb R)$
as a space of functions with the finite norm
$$
\|f\|_{p,q}=\big\|\|f(\cdot;t)\|_{p,{\cal K}}\big\|_{q,\mathbb R}=
\Big (\int\limits_{\mathbb R}\Big (\int\limits_{\cal K}|f(x;t)|^pdx\Big )^{q/p}dt\Big )^{1/q}.
$$
Similarly, the space $\widetilde L_{p,q}=\widetilde L_{p,q}({\cal K}\times \mathbb R)$
consists of functions with the finite norm
$$|\!|\!|f|\!|\!|_{p,q}=\big\|\|f(x,\cdot)\|_{q,\mathbb R}\big\|_{p,{\cal K}}=
\Big (\int\limits_{\cal K}\Big (\int\limits_{\mathbb R}|f(x;t)|^qdt\Big )^{p/q}dx\Big )^{1/p}.
$$

We denote by $W^{2,1}_{p,q,(\mu)}$ and $\widetilde {W}^{2,1}_{p,q,(\mu)}$ the sets of functions with the
finite norms
$$
\|u \|_{W^{2,1}_{p,q,(\mu)}}=\||x'|^\mu\partial_t u\|_{p,q}+\||x'|^\mu D(Du)\|_{p,q}+
\||x'|^{\mu-2} u\|_{p,q}
$$
and
$$
\|u \|_{\widetilde{W}^{2,1}_{p,q,(\mu)}}=|\!|\!||x'|^\mu\partial_t u|\!|\!|_{p,q}+
|\!|\!||x'|^\mu D(Du)|\!|\!|_{p,q}+ |\!|\!| |x'|^{\mu-2} u|\!|\!|_{p,q}
$$
respectively.\medskip

\begin{sats}\label{main}
Let $\lambda_c^{\pm}$ be the critical exponents defined in {\rm Sect.\ref{Rn}}. Suppose that
\begin{equation}\label{mu}
2-\frac mp-\lambda_c^+<\mu<m-\frac mp+\lambda_c^-.
\end{equation}
Then for any $f\in L_{p,q}$ (respectively, for $f\in \widetilde L_{p,q}$)
there is a solution of the boundary value problem {\rm (\ref{Jan1a})} satisfying the following estimates:
\begin{equation}\label{beztilde}
\|u \|_{W^{2,1}_{p,q,(\mu)}}\le C\ \||x'|^\mu f\|_{p,q};
\end{equation}
\begin{equation}\label{tilde}
\|u \|_{\widetilde{W}^{2,1}_{p,q,(\mu)}}\le C\ |\!|\!||x'|^\mu f|\!|\!|_{p,q}.
\end{equation}
This solution is unique in the space $W^{2,1}_{p,q,(\mu)}$ (respectively, $\widetilde {W}^{2,1}_{p,q,(\mu)}$).
\end{sats}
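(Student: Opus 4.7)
The plan is to reduce the problem on ${\cal K}\times\mathbb{R}$ to a one-parameter family of parabolic problems on the spherical cap $\omega$ by means of a Mellin transform in the radial variable $r=|x'|$ and a Fourier transform in the transverse variables $y\in\mathbb{R}^{n-m}$. Writing $x=(r\theta,y)$ with $r>0$, $\theta\in\omega$, $y\in\mathbb{R}^{n-m}$, and substituting $r=e^s$ together with a gauge factor $w(s,\theta,y;t)=e^{\alpha s}u(e^s\theta,y;t)$ for a suitable shift $\alpha=\alpha(\mu,p)$, the weighted spaces $W^{2,1}_{p,q,(\mu)}$ and $\widetilde W^{2,1}_{p,q,(\mu)}$ become unweighted anisotropic spaces on $\mathbb{R}_s\times\omega\times\mathbb{R}^{n-m}_y\times\mathbb{R}_t$. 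The operator ${\cal L}$ becomes translation-invariant in $(s,y)$ and, after Fourier transform in these variables, yields an operator pencil $\mathfrak{A}(\lambda,\eta,t)$ on $\omega$ with Dirichlet data, where $\lambda=\alpha+i\tau$ is the Mellin variable and $\eta$ is dual to $y$.

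By the spectral properties of $\lambda_c^\pm$ collected in Theorem \ref{lambda}, the pencil $\mathfrak{A}(\lambda,0,t)$ is uniformly invertible for a.e.\ $t$ whenever $\mathrm{Re}\,\lambda$ lies in a strip whose width is controlled by $\lambda_c^\pm$; condition (\ref{mu}) is tailored so that the line $\mathrm{Re}\,\lambda=\alpha(\mu,p)$ is contained in this strip. Fibre-wise inversion then produces a candidate solution of the reduced problem, and the main task is to convert the fibre-wise bounds into the coercive estimates (\ref{beztilde}) and (\ref{tilde}). For this step we rely on the sharp half-space results of Krylov \cite{Kr2}, \cite{Kr} and of the authors \cite{KN}, which furnish $L_{p,q}$- and $\widetilde L_{p,q}$-estimates for model parabolic problems whose coefficients are only measurable in $t$, combined with an operator-valued Fourier/Mellin multiplier theorem applied in the variables $(s,y,\tau)$ with values in the cross-sectional function spaces.

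The two norms $L_{p,q}$ and $\widetilde L_{p,q}$ require parallel but not identical treatments: the order of integration dictates whether one first fixes $t$ and applies spatial Mellin calculus or first fixes $x$ and applies $L_q$-in-$t$ estimates, interchanging the roles of inner and outer norms. Uniqueness in the respective weighted classes follows from the same coercive estimates applied to the homogeneous problem, supplemented by a duality argument against the formally adjoint operator, whose critical exponents are linked to those of ${\cal L}$ through the relations in Theorem \ref{lambda}. The principal obstacle is verifying the multiplier hypotheses for $\mathfrak{A}^{-1}(\lambda,\eta,t)$ under the sole assumption of measurability of $A_{ij}$ in $t$: classical Mikhlin--H\"ormander theory is not applicable, so one has to employ the operator-valued, discontinuous-in-$t$ variants of the multiplier theorems developed for the half-space problem in \cite{KN}, together with the precise spectral information on the cross-sectional pencil encoded in $\lambda_c^\pm$.
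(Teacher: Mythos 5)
Your reduction collapses at the very first step. The Mellin transform in $s=\log|x'|$ turns the problem into a family of fibre problems only if the operator is translation invariant in $s$, i.e.\ invariant under the dilations $x'\to\rho x'$. For ${\cal L}=\partial_t-A_{ij}(t)D_iD_j$ the elliptic part is dilation homogeneous of degree $-2$, but $\partial_t$ is not: after the substitution $r=e^s$ one gets (up to the gauge factor) an operator of the form $e^{2s}\partial_t-\mathfrak{A}(\theta,\partial_s,\partial_\theta,D_y;t)$, and the factor $e^{2s}$ destroys translation invariance in $s$. Under the Mellin transform this term acts as a shift in the complex Mellin plane, not as a multiplier, so no operator pencil $\mathfrak{A}(\lambda,\eta,t)$ arises and there is nothing to invert fibre-wise. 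Homogeneity could only be restored by the parabolic scaling $t\to\rho^2t$, which replaces $A_{ij}(t)$ by $A_{ij}(\rho^2t)$ --- a different operator, since $A_{ij}$ is merely measurable; likewise a Laplace/Fourier transform in $t$ (the classical Kozlov--Maz'ya route for $t$-independent coefficients) is unavailable. This is precisely the obstruction the paper states in the introduction (``if coefficients depend on $t$ the above reduction is impossible'') and the reason the critical exponents are defined through the decay estimate (\ref{0.4}) rather than through a pencil spectrum. Consequently your key claim --- that ``the spectral properties of $\lambda_c^\pm$'' give uniform invertibility of $\mathfrak{A}(\lambda,0,t)$ for a.e.\ $t$ in a strip determined by (\ref{mu}) --- has no basis: $\lambda_c^\pm$ is a global-in-time quantity attached to solutions of the evolution problem, it is identified with eigenvalues of the Beltrami--Laplace pencil only in special cases (statements {\bf 3} and {\bf 8} of Theorem \ref{lambda}), and a fibre-wise-in-$t$ inversion discards the coupling through $\partial_t$ altogether. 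The operator-valued multiplier theorems you invoke also presuppose a multiplier structure in the transformed variables that simply is not present here; the half-space results of \cite{KN} are themselves obtained from Green-function estimates, not from a multiplier theorem you could transplant.

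For comparison, the paper's route is entirely kernel-based: the decay definition of $\lambda_c^\pm$ is converted into pointwise bounds for the wedge Green function and its derivatives (Lemma \ref{Green}, Theorem \ref{T2s}), with the weight factors ${\cal R}_x^{\lambda^+}{\cal R}_y^{\lambda^-}$; then the weighted integral operators are shown bounded on $L_p$ and $\widetilde L_{p,\infty}$ (Proposition \ref{L_p}), interpolation and duality give the $\widetilde L_{p,q}$ scale, a H\"ormander-type cancellation estimate in $t$ (Lemma \ref{weak_2}) combined with \cite[Theorem 3.8]{BIN} gives the $L_{p,q}$ scale, the $\partial_t u$ and $D(Du)$ terms are recovered by localizing the half-space estimate of \cite{KN} with a partition of unity, and uniqueness follows from the Green representation applied to cut-off solutions (Theorem \ref{unique}). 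If you want to salvage a transform-based argument you would first have to prove a spectral characterization of $\lambda_c^\pm$ for merely measurable $A_{ij}(t)$, which is not known and is exactly what the paper's construction is designed to avoid.
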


Our paper is organized as follows. The critical exponents
$\lambda_c^{\pm}$ of the operator ${\cal L}$ in ${\cal K}$ are
defined in Section \ref{Rn}, where we present also their various
properties. In Section \ref{Greenest} we estimate the Green function
of the BVP (\ref{Jan1a}) and its derivatives. Theorem \ref{main} is
proved in Section \ref{Rn+2}. In Section \ref{solv} we give some
applications of this result to the Dirichlet problem for linear and
quasi-linear non-divergence parabolic equations with discontinuous
in time coefficients in cylinders $\Omega\times (0,T)$, where
$\Omega$ is a bounded domain with an edge or with a conical point.
Auxiliary estimates are collected in the Appendix.\medskip

Let us introduce some notation: $x=(x',x'')=(x_1,\ldots,x_n)$ is a point in $\mathbb R^n$
(here $x'=(x_1,\ldots,x_m)\in\mathbb R^n$ and $x''=(x_{m+1},\ldots,x_n)\in \mathbb R^{n-m}$);
$(x;t)$ is a point in $\mathbb R^{n+1}$.

If $\Omega$ is a domain in $\mathbb R^n$ then $\partial\Omega$ stands for its boundary.
For a cylinder $Q=\Omega\times\,(t_1,t_2)$, we denote by
$\partial'Q=\big(\partial\Omega\times\,(t_1,t_2)\big)\cup\big(\overline{\Omega} \times \{t_1\}\big)$
its parabolic boundary.

For $x\in{\cal K}$, $d(x)$ is the distance from $x$ to $\partial {\cal K}$, and
$r_x=\frac{d(x)}{|x'|}$.

For $0<\theta<\pi$, $K^\theta=\{x'\in\mathbb R^m\,:\,\widehat{x',x_1}<\theta\}$ is a circular cone.
$$\begin{array}{ll}
B_R^n(x_0)=\{x\in\mathbb R^n\,:\,|x-x_0|<R\}; & B_R^n=B_R^n(0);\\
Q_R(x_0;t_0)=B_R^n(x_0)\times\,(t_0-R^2,t_0]; &  Q_R=Q_R(0;0);\\
{\cal B}_{\rho,R}^K(x_0)=\big(B_{\rho}^m(x'_0)\cap K\big)\times B_R^{n-m}(x''_0); &
{\cal B}_{\rho,R}^K={\cal B}_{\rho,R}^K(0);\\
{\cal Q}_{\rho,R}^K(x_0;t_0)={\cal B}_{\rho,R}^K(x_0)\times\,(t_0-R^2,t_0]; &
{\cal Q}_{\rho,R}^K={\cal Q}_{\rho,R}^K(0;0);\\
{\cal B}_R^K(x_0)={\cal B}_{R,R}^K(x_0); & {\cal B}_R^K={\cal B}_R^K(0);\\
{\cal Q}_R^K(x_0;t_0)={\cal Q}_{R,R}^K(x_0;t_0); & {\cal Q}_R^K={\cal Q}_R^K(0;0).
\end{array}
$$

The indeces $i, j$  vary from $1$ to $n$ while the indeces $k,\ell$ vary from $1$ to $m$.
Repeated indeces indicate summation.

By ${\cal V}(Q_R(x_0;t_0))$ we denote the set of functions $u$ with finite norm
\begin{equation*}
\|u\|_{{\cal V}(Q_R(x_0;t_0))}=\sup_{\tau\in\,]t_0-R^2,t_0]}\|u(\tau,\cdot)\|_{L_2(B_R(x_0))}
+\|Du\|_{L_2(Q_R(x_0;t_0)))}.
\end{equation*}
We write $u\in {\cal V}_\loc(Q_R(x_0;t_0))$ if
$u\in {\cal V}(Q_{R'}(x_0;t_0))$ for all $R'\in (0,R)$. Similarly we define spaces
${\cal V}({\cal Q}_R^K(x_0;t_0))$ and ${\cal V}_\loc({\cal Q}_R^K(x_0;t_0))$.

We use the letter $C$ to denote various positive
constants. To indicate that $C$ depends on some parameter $a$, we
sometimes write $C(a)$.

\section{Critical exponent}\label{Rn}

We define the critical exponent for the operator ${\cal L}$ and
the wedge ${\cal K}$ as the supremum of all $\lambda$
such that
\begin{equation}\label{0.4}
|u(x;t)|\leq C(\lambda,\kappa) \Big (\frac{|x'|}{R}\Big )^\lambda\sup_{
{\cal Q}_{\kappa R}^K(0;t_0)}|u|\quad\mbox{for}\quad (x;t)\in {\cal
Q}_{R/2}^K(0;t_0)
\end{equation}
for a certain $\kappa\in (1/2,1)$ independent of $t_0$, $R$ and $u$.
This inequality must be satisfied for all  $t_0\in\mathbb R$, $R>0$ and
for all  $u\in {\cal V}_\loc({\cal Q}_{R}^K(0;t_0))$  subject to
\begin{equation}\label{0.5}
{\cal L}u=0\quad\mbox{in}\quad {\cal Q}_R^K(0;t_0);\qquad
u\big|_{x\in\partial{\cal K}}=0.
\end{equation}
 We denote this critical exponent by $\lambda_c^+\equiv\lambda_c^+({\cal K},{\cal L})$.
Since $\lambda=0$  satisfies (\ref{0.4}) we conclude that $\lambda_c\geq0$.

We also consider operator
\begin{equation}\label{0.51}
\widehat{{\cal L}}=\partial_t-A_{ij}(-t)D_iD_j.
\end{equation}
The elliptic part of this operator evidently satisfies (\ref{0.3}). We introduce the
critical exponent for the operator $\widehat{{\cal L}}$ and the wedge ${\cal K}$ and define
$\lambda_c^-\equiv\lambda_c^-({\cal K},{\cal L})=\lambda_c^+({\cal K},\widehat{{\cal L}})$.
Note that all the properties below will be proved simultaneously for ${\lambda}_c^{\pm}$.

\begin{Rem}\label{init1}
Consider the initial-boundary value problem
\begin{equation}\label{init}
{\cal L}u=f\quad\mbox{in}\quad {\cal K}\times \mathbb R_+;\qquad
u=0\quad\mbox{on}\quad (\partial {\cal K}\times \mathbb R_+)\cup({\cal K}\times \mathbb R_-),
\end{equation}
where coefficients $A_{ij}(t)$ are given only for $t>0$. In this
case we should slightly change the definition of $\lambda^+_c({\cal
K},{\cal L})$ by assuming additionally $t_0-R^2>0$ in {\rm
(\ref{0.4})}. Correspondingly, in the definition of
$\lambda^+_c({\cal K},\widehat {\cal L})$ we assume additionally
$t_0<0$ in {\rm (\ref{0.4})}.
\end{Rem}

\begin{lem}\label{netkappa}
The above definition of $\lambda_c^{\pm}({\cal K},{\cal L})$ does not depend on $\kappa\in (1/2,1)$.
\end{lem}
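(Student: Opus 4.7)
The plan is to show that if (\ref{0.4}) holds for some $\kappa_1 \in (1/2,1)$ with a given exponent $\lambda$, then it holds for any $\kappa_2 \in (1/2,1)$ with the same $\lambda$ (and a different constant); taking supremum in $\lambda$ then yields independence of $\kappa$. The statement for $\lambda_c^-$ follows by applying the same argument to $\widehat{\cal L}$. The direction $\kappa_2 \ge \kappa_1$ is immediate, because ${\cal Q}_{\kappa_1 R}^K \subset {\cal Q}_{\kappa_2 R}^K$ only enlarges the RHS of (\ref{0.4}), so the $\kappa_1$-estimate implies the $\kappa_2$-estimate with the same constant.

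For $\kappa_2 < \kappa_1$, I fix $(x;t) \in {\cal Q}_{R/2}^K(0;t_0)$ and split into two cases depending on the size of $|x'|$. Let $c_0 = c_0(\kappa_1, \kappa_2) > 0$ be a small constant to be chosen below. If $|x'| \ge c_0 R$, the factor $(|x'|/R)^\lambda$ is bounded below, and the desired bound follows from the trivial estimate $|u(x;t)| \le \sup_{{\cal Q}_{\kappa_2 R}^K(0;t_0)}|u|$, valid since $\kappa_2 > 1/2$ yields ${\cal Q}_{R/2}^K \subset {\cal Q}_{\kappa_2 R}^K$. If $|x'| < c_0 R$, I apply (\ref{0.4}) with $\kappa = \kappa_1$ at the translated center $(0, x''; t)$ and scale $R' = c' R$ for some $c' = c'(\kappa_1, \kappa_2) > 0$. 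Such shifts are legitimate: both ${\cal L}$ and $\partial{\cal K}$ are $x''$-translation invariant (coefficients depend only on $t$), while reassigning the reference time from $t_0$ to $t$ is allowed since (\ref{0.4}) must hold for every $t_0 \in \mathbb R$.

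The constant $c'$ must be small enough that both parabolic-cylinder inclusions ${\cal Q}_{R'}^K((0, x''); t) \subset {\cal Q}_R^K(0; t_0)$ (so that $u$ still solves (\ref{0.5}) on the smaller domain) and ${\cal Q}_{\kappa_1 R'}^K((0, x''); t) \subset {\cal Q}_{\kappa_2 R}^K(0; t_0)$ (so that the supremum on the RHS is controlled by the target one) hold. These reduce to inequalities such as $R' \le R/2$, $|x''| + \kappa_1 R' \le \kappa_2 R$, and $\kappa_1^2 R'^2 \le \kappa_2^2 R^2 + (t - t_0)$, which using $|x''| \le R/2$ and $t - t_0 > -R^2/4$ translate to the positivity of $\kappa_2 - 1/2$ and $\kappa_2^2 - 1/4$---both consequences of $\kappa_2 > 1/2$. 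With $c_0 = c'/2$ the condition $|x'| \le R'/2$ is automatically satisfied, and the shifted $\kappa_1$-estimate gives
$$
|u(x;t)| \le C(\lambda, \kappa_1)(|x'|/R')^\lambda \sup_{{\cal Q}_{\kappa_1 R'}^K((0, x''); t)}|u| \le C(\lambda, \kappa_1)(c')^{-\lambda}(|x'|/R)^\lambda \sup_{{\cal Q}_{\kappa_2 R}^K(0; t_0)}|u|.
$$

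The main obstacle is the quantitative geometric bookkeeping in the shifted-center step: the two inclusion conditions above must hold simultaneously for every admissible $(x; t)$, and it is precisely the assumption $\kappa_2 > 1/2$ that provides the uniform positive ``gap'' making the shift possible.
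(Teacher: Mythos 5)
Your proof is correct and follows essentially the same route as the paper: the paper also reduces the case $\kappa_2<\kappa_1$ to applying the $\kappa_1$-estimate on cylinders shifted in $x''$ and $t$ with a comparable radius (there $\rho=\frac{2\kappa_2-1}{2\kappa_1-1}R$, organized as a covering of ${\cal Q}^K_{\rho/2,R/2}(0;t_0)$) whose $\kappa_1$-dilates lie in ${\cal Q}^K_{\kappa_2 R}(0;t_0)$, with the region of larger $|x'|$ absorbed into the constant via the trivial bound. Your pointwise formulation with the generic small constant $c'$ is just a different bookkeeping of the same argument.
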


\begin{proof}
We have to show that any solution to (\ref{0.5}) satisfying (\ref{0.4}) with some $\lambda$ and $\kappa=\kappa_1$
satisfies this estimate with the same $\lambda$ and $\kappa=\kappa_2$. This statement is trivial if
 $\kappa_1\le\kappa_2$, so we assume that $1/2<\kappa_2<\kappa_1<1$.

Define $\rho=\frac {2\kappa_2-1}{2\kappa_1-1}R$ and consider the set $\mathfrak Q$ of points $(0,x_0'';\tau_0)$
such that
${\cal Q}^K_{\kappa_1\rho}(0,x_0'';\tau_0)\subset{\cal Q}^K_{\kappa_2R}(0;t_0)$. Then direct calculation shows that
$${\cal Q}^K_{\frac \rho2,\frac R2}(0;t_0)\subset\bigcup\limits_{(0,x_0'';t_0)\in\mathfrak Q}
{\cal Q}^K_{\frac \rho2}(0,x_0'';\tau_0).
$$
Applying (\ref{0.4}) with $\kappa=\kappa_1$ and $R=\rho$ in ${\cal Q}^K_{\frac \rho2}(0,x_0'';\tau_0)$ for
$(0,x_0'';\tau_0)\in\mathfrak Q$, we obtain
$$|u(x;t)|\leq C(\lambda,\kappa_1) \Big (\frac{|x'|}{\rho}\Big )^\lambda
\sup_{{\cal Q}_{\kappa_2 R}^K(0;t_0)}|u|\quad\mbox{for}\quad (x;t)\in {\cal Q}^K_{\frac \rho2,\frac R2}(0;t_0).
$$
This implies (\ref{0.4}) with $\kappa=\kappa_2$ and
$C(\lambda,\kappa_2)=\big(\frac {2\kappa_1-1}{2\kappa_2-1}\big)^{\lambda}\cdot
\max\{2^{\lambda},C(\lambda,\kappa_1)\}$.
\end{proof}

\begin{Rem}\label{Rem1} Instead of {\rm (\ref{0.4})} one can define the
critical exponent as the supremum of all $\lambda$ in the inequality
\begin{equation}\label{0.4a}
|u(x;t)|\leq \frac {C(\lambda,\kappa)}{R^{1+\frac n2}}\ \Big (\frac{|x'|}{R}\Big )^\lambda
\Big (\int\limits_{ {\cal Q}_{\kappa R}^K(0;t_0)}|u(x;t)|^2dxdt\Big )^{1/2}
\end{equation}
for $(x;t)\in {\cal Q}_{R/2}^K(0;t_0)$, where $\kappa\in (1/2,1)$
and the inequality must be valid for the same set of $t_0$, $R$ and
$u$ as above.  Clearly, {\rm (\ref{0.4})} follows from {\rm (\ref{0.4a})}.
In turn, {\rm (\ref{0.4})} implies {\rm (\ref{0.4a})} due to the local estimate
\begin{equation}\label{local}
\sup_{ {\cal Q}_{\kappa_1 R}^K(0;t_0)}|u|\leq
\frac {C(\kappa_1,\kappa_2)}{R^{1+\frac n2}}\ \Big (\int\limits_{ {\cal Q}_{\kappa_2
R}^K(0;t_0)}|u(x;t)|^2dxdt\Big )^{1/2},
\end{equation}
which holds for solutions to {\rm (\ref{0.5})} and
$1/2<\kappa_1<\kappa_2<1$ (see, e.g., \cite[Ch. III, Sect. 8]{LSU}).
\end{Rem}

Below we present some properties and estimates for $\lambda_c^{\pm}$ for
various geometries of ${\cal K}$.

\begin{sats}\label{lambda}
{\bf 1}. $\lambda_c^{\pm}>0$.\medskip

{\bf 2}. Let ${\cal K}_1\subset{\cal K}_2$. Then $\lambda_c^+({\cal K}_1,{\cal L})\ge\lambda_c^+({\cal K}_2,{\cal L})$,
$\lambda_c^-({\cal K}_1,{\cal L})\ge\lambda_c^-({\cal K}_2,{\cal L})$.\medskip

{\bf 3}. If $A_{ij}(t)\equiv\delta_{ij}$ then $\lambda_c^{\pm}({\cal K},{\cal L})=
\lambda_{\cal D}\equiv-\frac{m-2}{2}+\sqrt{\Lambda_{\cal D}+\frac{(m-2)^2}{4}}$, where
$\Lambda_{\cal D}$ is the first eigenvalue of the Dirichlet boundary value problem to
Beltrami-Laplacian in $\omega$.\medskip

{\bf 4}. Let $K$ be an acute cone i.e. $\overline K\setminus\{0\}\subset
\mathbb{R}^m_+ =\{x'\in\mathbb{R}^m:\ x^1>0\}$. Then
$\lambda_c^{\pm}>1$.\medskip

{\bf 5}. If $K\to\mathbb{R}^m_+$ then $\lambda_c^{\pm}\to1$. \medskip

{\bf 6}. Let ${\cal L}'=\partial_t-A_{k\ell}(t)D_kD_{\ell}$.
Then $\lambda_c^{\pm}({\cal K},{\cal L})=\lambda_c^{\pm}(K,{\cal L}')$.\medskip

{\bf 7}. $\lambda_c^{\pm}\geq -\frac{m}{2}+\nu\sqrt{\Lambda_{\cal D}+\frac{(m-2)^2}{4}}$.
In particular, if $\Lambda_{\cal D}\to\infty$ then $\lambda_c^{\pm}\to\infty$.\medskip

{\bf 8}. Suppose that $A_{ij}$ is a piece-wise constant matrix. Namely, let $-\infty=T_0<T_1<T_2<\dots<T_N=+\infty$, and let
$A_{ij}(t)\equiv A_{ij}^k$ for $t\in(T_{k-1},T_k)$, $k=1,\dots,N$. Then
$$\lambda_c^+({\cal K},{\cal L})=\lambda_c^-({\cal K},{\cal L})=\min\limits_{k=1,\dots,N}\widetilde\lambda_{\cal D}^{(k)}\equiv
-\frac{m-2}{2}+\sqrt{\min\limits_{k=1,\dots,N}\widetilde\Lambda_{\cal D}^{(k)}+\frac{(m-2)^2}{4}},
$$
where $\widetilde\Lambda_{\cal D}^{(k)}$ is the first eigenvalue of the Dirichlet boundary value problem to
Beltrami-Laplacian in the domain $\widetilde\omega^{(k)}=\widetilde K^{(k)}\cap {\mathbb S}^{m-1}$ while
the cones $\widetilde K^{(k)}$ are images of $K$ under change of variables reducing $A_{ij}^k$ to the canonical form.
\end{sats}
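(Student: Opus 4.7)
The plan is to treat the eight items in an order that lets later parts build on earlier ones. First I would dispose of the reductions and monotonicity (items 1, 2, 6) so that items 3--8 can be proved under $n=m$; then I would handle the explicit computations (3, 7, 8); and finally the asymptotic claims near the half-space (4, 5).

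\textbf{Reductions and monotonicity.} For item 6, the inequality $\lambda_c^\pm(K,\mathcal{L}')\le\lambda_c^\pm(\mathcal{K},\mathcal{L})$ is immediate: an $\mathcal{L}'$-solution $v(x';t)$ in $K$ lifts trivially to an $\mathcal{L}$-solution $u(x;t)=v(x';t)$ in $\mathcal{K}$, and (\ref{0.4}) applied to $u$ reduces to (\ref{0.4}) for $v$. The reverse inequality exploits translation invariance of $\mathcal{L}$ and of $\partial\mathcal{K}$ in $x''$: I would cover $\mathcal{Q}^K_{R/2}(0;t_0)$ in the $x''$-direction by shifted cylinders $\mathcal{Q}^K_{R/2}(0,x_0'';t_0)$, apply the assumed estimate on each, and combine with the local $L^\infty$ bound (\ref{local}) in $x''$. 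Item 1 is the boundary Hölder estimate at the vertex: since $\omega\in C^{1,1}$ one has an explicit superharmonic barrier vanishing on $\partial\mathcal{K}$, and iterating oscillation decay on dyadic parabolic shells yields (\ref{0.4}) for some $\lambda>0$. For item 2 I would use that $|u|$ is a nonnegative subsolution of the linear parabolic operator $\mathcal{L}$ (standard truncation argument for measurable coefficients), extend it by zero from $\mathcal{K}_1$ to $\mathcal{K}_2$, and dominate by the solution in $\mathcal{K}_2$ to which (\ref{0.4}) with $\lambda_c^\pm(\mathcal{K}_2,\mathcal{L})$ applies.

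\textbf{Explicit computations.} For item 3 with $A_{ij}\equiv\delta_{ij}$, separation of variables produces the time-independent harmonic solutions $|x'|^\lambda\phi(\omega)$ with $\Lambda=\lambda(\lambda+m-2)$; the first Dirichlet mode on $\omega$ yields the extremal decay, so $\lambda_c^\pm=\lambda_{\cal D}$, with sharpness coming from spherical-harmonic expansion of a general bounded solution. Item 7 uses the same ansatz $w=|x'|^\lambda\phi_{\cal D}(x'/|x'|)$ as a barrier for general $\mathcal{L}$: decompose $A_{k\ell}D_kD_\ell w$ into radial, angular and mixed parts and bound the quadratic form below using $\nu I\le A\le\nu^{-1}I$; the requirement that $\mathcal{L}w$ have the correct sign forces the stated inequality, and a standard barrier/comparison step converts it into a lower bound on $\lambda_c^\pm$. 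For item 8, on each slab $(T_{k-1},T_k)\times\mathcal{K}$ the linear change $y=(A^k)^{-1/2}x$ reduces $\mathcal{L}$ to the heat operator on $\widetilde{K}^{(k)}$; item 3 then gives decay $\widetilde\lambda_{\cal D}^{(k)}$ on that slab, and the minimum dominates the global critical exponent since the worst slab controls propagation. Sharpness is achieved by selecting a solution whose angular profile on the minimising slab matches the principal eigenfunction of $\widetilde\omega^{(k)}$; the time-symmetry of the configuration under $t\mapsto -t$ permutation of slabs gives $\lambda_c^+=\lambda_c^-$.

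\textbf{Asymptotics at the half-space.} For item 5 I would combine continuity of $\lambda_{\cal D}(\omega)$ in the $C^{1,1}$ topology (so $\lambda_{\cal D}(\omega_j)\to 1$ as $K_j\to\mathbb{R}^m_+$) with items 3 and 7: the latter provides a uniform lower bound approaching $1$, and an explicit solution constructed from the principal eigenfunction provides a matching upper bound. Item 4 then follows from item 2 and a strict linear barrier: when $\overline{K}\setminus\{0\}\subset\{x_1>0\}$ one has $x_1\ge c|x'|$ on $\overline K$, so a solution $u$ can be factored as $x_1\cdot(u/x_1)$ with $u/x_1$ still enjoying boundary decay, upgrading the exponent strictly above $1$.

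\textbf{Main obstacle.} I expect the reverse inequality in item 6 and the sharpness direction of item 8 to be the most delicate. The difficulty in item 6 is that the localisation argument in $x''$ must respect the parabolic scale $R$ while the coefficients are merely measurable in $t$, preventing a clean Fourier decomposition; one has to balance the sup-norm gains from (\ref{local}) against the critical decay. In item 8, the sharpness part demands the construction of an honest $\mathcal{L}$-solution (not merely a supersolution) that saturates $\min_k\widetilde\lambda_{\cal D}^{(k)}$ and propagates the prescribed angular profile across the interfaces $t=T_k$ without gaining smoothing, which requires careful matching at the discontinuities of $A_{ij}(t)$.
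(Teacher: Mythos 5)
Your reductions (items 1--3) and the general slab strategy for item 8 are broadly in line with the paper, but several key steps would fail as described. The most serious is the reverse inequality in item 6: your covering argument in $x''$ cannot work, because a wedge solution with $x''$ frozen is \emph{not} a solution of ${\cal L}'$ --- the terms $A_{ij}D_iD_ju$ with $i>m$ or $j>m$ do not vanish --- so the assumed estimate (\ref{0.4}) for ${\cal L}'$ simply does not apply on the shifted cylinders, and (\ref{local}) cannot repair that. The paper instead writes ${\cal L}'u=f_1+f_2$ with the $x''$- and mixed derivatives moved to the right-hand side, represents the cut-off solution through the Green function of ${\cal L}'$ in the cone, uses the weighted kernel bound (Lemma \ref{Green} plus Proposition \ref{L_p}) together with the derivative estimates of Lemma \ref{Laug19}, and bootstraps, gaining one unit of decay per step until $\lambda_c^+(K,{\cal L}')$ is reached. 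Without this (or an equivalent substitute) the reduction to $m=n$, on which your items 7 and 8 rest, is unproved. Item 7 also fails as sketched: $\Lambda_{\cal D}$ controls only the Laplace--Beltrami operator applied to $\phi_{\cal D}$, i.e.\ the trace of the angular Hessian, whereas $A_{k\ell}D_kD_\ell\big(|x'|^\lambda\phi_{\cal D}\big)$ involves the full Hessian, whose off-trace angular part admits no pointwise bound in terms of $\Lambda_{\cal D}$; so the sign condition ``${\cal L}w\ge0$ for all admissible $A(t)$'' cannot be forced this way. The paper's constant $-\frac m2+\nu\sqrt{\Lambda_{\cal D}+\frac{(m-2)^2}{4}}$ comes from an $L_2$ argument: the weighted Caccioppoli/Hardy estimate of Lemma \ref{int_est} (valid for $\mu^2<\nu^2\big(\Lambda_{\cal D}+\frac{(m-2)^2}{4}\big)$) combined with a local sup bound over the shell ${\cal P}_\rho$, which is also where the $-\frac m2$ rather than $-\frac{m-2}2$ originates.

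Items 4 and 5 need different mechanisms as well. For item 5, your lower bound via item 7 tends to $\frac m2(\nu-1)\le 0$, not to $1$ (on the half-sphere $\Lambda_{\cal D}=m-1$), and your proposed upper bound built from the principal eigenfunction is a solution only of the heat equation, not of a general ${\cal L}$ with measurable $A(t)$. The paper uses barriers uniform over the operator class: $\pm{\cal L}(x^1)^{1\mp\gamma}>\varepsilon(\gamma)>0$ for every admissible $A(t)$, angularly perturbed to the cones $K^{\theta_\pm}$, with the normal derivative (Hopf) lemma and comparison giving the upper bound $\lambda_c^+\le 1+\gamma$. For item 4, writing $u=x_1\,(u/x_1)$ and asserting that $u/x_1$ ``still enjoys boundary decay'' assumes exactly the strict gain to be proved: $x_1$ does not vanish on $\partial K\setminus\{0\}$, and the quotient satisfies an equation with a drift of order $|x'|^{-1}$ near the edge, so no known estimate applies off the shelf; the paper's proof is an explicit supersolution (from \cite{Na1}) yielding the exponent $1+\gamma_*$, with $\gamma_*$ a root of $\gamma^2+\gamma={\rm ctg}^2(\theta)\nu^2$. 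Finally, in item 8 you misplace the difficulty: the upper bound needs no matching of angular profiles across the interfaces $t=T_k$ --- it suffices to take cylinders contained in a single slab and use the static homogeneous solution of that constant-coefficient operator --- whereas the genuinely delicate part is propagating the decay from the longest slab to the final time $t_0$, which the paper does via the choice $h\ge C(N)R$, the gradient estimate (Proposition \ref{Pr1a}), and the slab-wise supersolutions $\widetilde w_k$ with the maximum principle; your phrase ``the worst slab controls propagation'' leaves this step unaddressed.
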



\begin{proof} It is sufficient to prove all the statements for $\lambda_c^+$.\medskip

{\bf 1}. Since $\partial \omega\in {\cal C}^{1,1}$, the
wedge ${\cal K}$ satisfies the so-called $A$-condition, see
\cite[Ch. I, Sect. 1]{LSU}. By \cite[Ch. III, Theorem 10.1]{LSU}, any solution of (\ref{0.5}) satisfies
the H\"older condition. Therefore, $\lambda_c^+>0$.

The only exception is the case of crack ($m=2$,
$\omega={\mathbb S}^1\setminus\{\Theta_0\}$, where $\Theta_0$ is a point on
${\mathbb S}^1$). In this case, one can transform $K$ to a half-plane by a
bi-Lipschitz mapping ($r\to r$ and $\theta\to \theta/2$,  where $r$
and $\theta$ are polar coordinates) and then apply \cite[Ch. III, Theorem 10.1]{LSU}.
Thus, in this case also $\lambda_c^+>0$.\medskip

{\bf 2}. This statement easily follows from the maximum principle.\medskip

{\bf 3}. For smooth $\omega$ this statement is well known (see, for
example \cite[Section 4]{Na}).
For  $\partial\omega\in{\cal C}^{1,1}$  we can approximate $\omega$
by smooth domains, and the required fact follows from the statement
{\bf 2}.\medskip

{\bf 4}. Without loss of generality we can assume $t_0=0$.
Since $K$ is acute, there exists $\theta<\frac {\pi}2$ such
that $K\subset K^{\theta}$. We
define the barrier function (\cite[Lemma 5.1]{Na1}, see also \cite[Lemma 4.1]{AN}) by
$$\widehat w(x;t)=\frac {(x^1)^{\gamma-1}}{\rho^{\gamma+1}}\cdot
\frac {(x^1)^2-\cos^2(\theta)|x'|^2}{\sin^2(\theta)}
+\frac{{\rm ctg}^2(\theta)\nu^2(x^1)^2}{4\rho^2}+\frac {|x''|^2-t}{(s_0\rho)^2}
$$
(here $s_0=\frac {(2(n-m)+\nu)^{\frac 12}}{{\rm ctg}(\theta)\nu}$).

Now we define $\widetilde\gamma=\widetilde\gamma(\nu,\theta)$
as a positive root of the quadratic equation
$\widetilde\gamma^2+\widetilde\gamma-{\rm ctg}^2(\theta)\nu^2=0$
and put $\gamma_*=\min\{1,\widetilde\gamma\}$.
Then direct calculation (see \cite[Lemma 5.1]{Na1}) shows that
if $0<\gamma\le\gamma_*(\nu,\theta)$ then ${\cal L}w\ge0$
on the set ${\cal Q}_{\rho,s_0\rho}^K$.
Moreover, $w\ge0$ on $\{x\in\partial{\cal K}\}$, and $w\ge
C(\theta,\nu)$ on the other parts of $\partial'{\cal Q}_{\rho,s_0\rho}^K$.

Now we choose $\rho=\rho(m,n,\nu,\theta)$
such that  ${\cal Q}_{\rho,s_0\rho}^K\subset {\cal Q}^K_{R/4}$ and consider
arbitrary solution of ${\cal L}u=0$ in ${\cal Q}^K_R$
vanishing for $\{x\in\partial{\cal K}\}$. By the maximum principle, for any
$\xi=(0,\xi'')$ with $|\xi''|<R/2$ and for any $\tau\in(0,R^2/4)$
the inequality
 $$\pm u(x+\xi;t-\tau)\le C^{-1}\widehat w(x;t)\cdot \sup\limits_{{\cal Q}^K_{3R/4}}\,(\pm u)$$
holds in ${\cal Q}_{\rho,s_0\rho}^K$. Setting $x''=0$ and $t=0$ we obtain (\ref{0.4}) with
$\kappa=\frac 34$ and $\lambda=1+\gamma_*$.
This implies $\lambda_c^+\ge 1+\gamma_*$, and the statement follows.\medskip

{\bf 5}. By the statement {\bf 2}, it is sufficient to prove that
if $K= K^{\theta}$ and $\theta\to\frac {\pi}2$ then  $\lambda_c^+\to1$.

For any  $0<\gamma<1$, we have
$$\pm{\cal L}(x^1)^{1\mp\gamma}>\varepsilon(\gamma)>0 \quad\mbox{in}\quad
(B_1^m\cap\mathbb{R}^m_+)\times{\mathbb R}^{n-m}\times{\mathbb R} .$$
By continuity, for any $\gamma>0$ there exists $\delta>0$ such that the functions
$$
w^{\pm}(x')=\Big(|x'|\cdot\cos\Big[(1\pm\delta)
\cdot\arccos\Big(\frac {x^1}{|x'|}\Big)\Big]\Big)^{1\mp\gamma}
$$
satisfy
$$\pm{\cal L}w^{\pm}>\frac 12\varepsilon(\gamma) \quad\mbox{in}\quad
(B_1^m\cap K^{\theta_{\pm}})\times{\mathbb R}^{n-m}\times{\mathbb R}$$
with $\theta_{\pm}=\frac {\pi}2(1\pm\delta)$. Therefore, the functions
$$
\widehat w^+(x;t)=w^+(x'/\rho)+\frac {|x|^2-t}{(s_1\rho)^2};
\qquad \widehat w^-(x;t)=w^-(x'/\rho)-\frac {|x''|^2-t}{(s_1\rho)^2}
$$
satisfy $\pm{\cal L}w^{\pm}\ge0$ in
${\cal Q}_{\rho,s_1\rho}^{K^{\theta_{\pm}}}$
if $s_1=s_1(m,n,\nu,\gamma)$ is large enough.

It is easy to see that $w^+\ge0$ on $x\in\partial{\cal K}^{\theta_+}$,
and $w\ge C(\theta,\nu)$ on the other parts of
$\partial'{\cal Q}_{\rho,s_1\rho}^{K^{\theta_+}}$. As in the proof of
statement {\bf 4}, this implies $\lambda_c^+({\cal K}^{\theta_+}, {\cal L})\ge 1-\gamma$.

Further, $w^-\le0$ on all parts of $\partial'{\cal Q}_{\rho,s_1\rho}^{K^{\theta_-}}$ besides
$\{(x;t):\ |x'|=\rho\}$. If
$\rho=\rho(m,n,\nu,\gamma)$ is such that  ${\cal Q}_{\rho,s_1\rho}^{K^{\theta_-}}\subset Q^K_{3R/4}$ then,
by the normal derivative lemma, any positive solution of ${\cal L}u=0$ in
$Q^K_R$ vanishing for $x\in\partial{\cal K}_1$ satisfies the
inequality $|Du|>0$ on the set $\partial'{\cal Q}_{\rho,s_1\rho}^{K^{\theta_-}}
\cap\{(x;t)\,:\,x'\in\partial K^{\theta_-},\ |x'|=\rho\}$. Thus, there exists a constant $C$ such that
$w^-\le Cu$ on $\partial'{\cal Q}_{\rho,s_1\rho}^{K^{\theta_-}}\cap\{(x;t):\ |x'|=\rho\}$. By the maximum principle,
 $u\ge C^{-1}w^-$ in ${\cal Q}_{\rho,s_1\rho}^{K^{\theta_-}}$ and therefore
$\lambda_c^+({\cal K}^{\theta_-}, {\cal L})\le 1+\gamma$.

Since $\gamma$ is arbitrarily small, the statement
follows.\medskip

{\bf 6}. This statement is subtle and uses some properties of the Green function.
We underline that the proof of these properties does not use statements {\bf 6} and {\bf 7}.

The inequality $\lambda_c^+({\cal K},{\cal L})\leq\lambda_c^+(K,{\cal L}')$ is quite clear,
since the function $u(x';t)$ satisfying ${\cal L}'u=0$ is a solution also ${\cal L}u=0$ in
corresponding set. Let us prove the opposite inequality. We put
$\lambda=\min\{\lambda_c^+({\cal K},{\cal L});\lambda_c^+(K,{\cal L}')-1\}-\varepsilon$ with a small
 $\varepsilon>0$ (by the statement {\bf 1} one can assume that $\lambda>-1$).

Let $u$ satisfy (\ref{0.5}).
Without loss of generality we assume $t_0=0$. We rewrite the equation as ${\cal L}'u=f_1+f_2$, where
$$
f_1=\sum_{i=m+1}^n\sum_{j=m+1}^nA_{ij}(t)D_iD_ju,\qquad
f_2=2\sum_{i=1}^m\sum_{j=m+1}^nA_{ij}(t)D_iD_ju
$$
(we fix the variable $x''\in B^{n-m}_{R/2}$ and consider it as a parameter).

Let $\zeta=\zeta(\tau)$ be a smooth function on $\mathbb R_+$, which is
equal to $1$ for $\tau<1/2$ and $0$ for $\tau>3/4$.
We put $\chi(x';t)=\zeta(|x'|/R)\zeta(\sqrt{|t|}/R)$. Then
$$
{\cal L}'(\chi u)=\big(\chi f_1+u\cdot {\cal L}'\chi\big)+\big(\chi f_2-A_{k\ell}D_k\chi\,D_{\ell}u)=:F_1+F_2.
$$
Using the Green function $\Gamma_K'$ for the Dirichlet problem for the operator ${\cal L}'$
in $K\times\mathbb R$, we obtain
\begin{equation}\label{Aug16b}
(\chi u)(x';t)=\int\limits_{-\infty}^t\int\limits_K
\Gamma_K'(x',y';t,s)\big(F_1(y';s)+F_2(y';s))\,dy'ds.
\end{equation}

Let $\mu=1-\lambda+\varepsilon$. By Lemma \ref{Green}, the kernel
$${\cal T}(x',y';t,s)=\frac{|x'|^{\mu-2}}{|y'|^{\mu}}\cdot\Gamma_K'(x',y';t,s)
$$
satisfies the inequality (\ref{kernel_k}) with $p=\infty$, $r=2$, $\lambda_1=\lambda_c^+(K,{\cal L}')-2-\varepsilon$,
$\lambda_2=\lambda_c^-(K,{\cal L}')-\varepsilon$. Thus, Proposition \ref{L_p} provides the estimate
\begin{equation}\label{Aug17c}
\sup_{Q_{R/2}^K}|x'|^{\mu-2}|u(x';t)|\leq C\cdot\sup_{Q_{3R/4}^K}|x'|^{\mu}(|F_1(x';t)|+|F_2(x';t)|),
\end{equation}
where $C$ does not depend on $u$. Further, Lemma \ref{Laug19} gives for $(x;t)\in {\cal Q}_{3R/4}^K$
\begin{equation*}\label{Aug17d}
|F_1(x';t)|\le \frac {C}{R^2}\Big (\frac{|x'|}{R}\Big )^{\lambda}\sup_{ {\cal Q}_{7R/8}^K}|u|;
\quad
|F_2(x';t)|\le \frac {C}{R^2}\Big (\frac{|x'|}{R}\Big )^{\lambda-1}\sup_{{\cal Q}_{7R/8}^K}|u|.
\end{equation*}
Therefore, estimate (\ref{Aug17c}) implies
$$|u(x';t)|\leq C\Big (\frac{|x'|}{R}\Big )^{2-\mu}\sup_{Q_{7R/8}^K}|u|\qquad\mbox{for}\quad
(x';t)\in {\cal Q}_{R/2}^K.
$$

Since all above estimates do not depend on $x''\in B^{n-m}_{R/2}$,
this ensures $\lambda_c^+({\cal K},{\cal L})\geq\lambda+1$. Since
$\varepsilon$ in the definition of $\lambda$ is arbitrarily small,
the statement follows.\medskip

{\bf 7}. By the statement {\bf 6} it's sufficient to prove this assertion for
$m=n$, i.e. ${\cal K}=K$. Let $u\in {\cal V}_\loc({\cal Q}_R^K(0;t_0))$
satisfy (\ref{Jan1a}) (without loss of generality, $t_0=0$).
Then the following local estimate is valid (cf. (\ref{local})):
\begin{equation}\label{Juni11b}
|u(x';t)|^2\leq C \rho^{-m-2}\int\limits_{{\cal P}_\rho}|u(y';\tau)|^2dy'd\tau,\qquad
 (x';t)\in{\cal Q}^K_{\frac 12},
\end{equation}
where $\rho=|x'|$ and
$${\cal P}_\rho={\cal Q}^K_{\frac {3\rho}2,\frac {\rho}2}(0;t)\setminus{\cal Q}^K_{\frac {\rho}2}(0;t)=
\Big\{ (y';\tau):\, y'\in K, \frac \rho 2<|y'|<\frac {3\rho}2, \tau\in (t-\frac {\rho^2}4,t)\Big\}.
$$

From (\ref{Juni11b}), it follows that for any $\mu\in\mathbb R$
$$
|u(x';t)|^2\leq C\rho^{-m-2\mu}\int\limits_{{\cal P}_\rho}|y'|^{2\mu-2}
|u(y';\tau)|^2dy'd\tau.
$$
Now, using the estimate (\ref{May9c}), we obtain
$$
|u(x';t)|^2\leq C\rho^{-m-2\mu}R^{2\mu-2}\int\limits_{{\cal Q}_{3R/4}^K} |u(y';\tau)|^2dy'd\tau\leq
C\Big (\frac{|x'|}{R}\Big)^{-m-2\mu}\sup_{{\cal Q}_{3R/4}^K}|u|^2
$$
for any $\mu$ satisfying the assumption of Lemma \ref{int_est}.
Thus, the statement {\bf 7} follows.\medskip

{\bf 8}. By the statement {\bf 6} we can assume that $m=n$. For
given $t_0\in\mathbb{R}$ and $R>0$, we introduce
$$h^2=\max \limits_{k=1,\dots,N}|(T_k,T_{k-1})\cap (t_0-R^2, t_0-R^2/4)|.
$$
Let, for example, $\min\{t_0-R^2/4,T_j\}-\max\{t_0-R^2,T_{j-1}\}=h^2$. We fix arbitrary
$0<\lambda<\min\limits_{k=1,\dots,N}\widetilde\lambda_{\cal D}^{(k)}$ and conclude by the statement {\bf 3}
that
\begin{equation}\label{qq}
|u(x';t_1)|\leq C(\lambda, \frac hR) \Big (\frac{|x'|}{h}\Big )^\lambda\sup_{{\cal Q}_{\kappa R}^K(0;t_0)}|u|
\quad\mbox{for}\quad x'\in {\cal B}_{R/2}^K,
\end{equation}
where $t_1=\min\{t_0-R^2/4,T_j\}$, $\kappa=\sqrt{1-h^2/R^2}$. Moreover, Proposition \ref{Pr1a} shows that
\begin{equation}\label{qqq}
|u(x';t)|\leq C(\lambda,\frac hR)\cdot r_x\sup_{{\cal Q}_{\kappa R}^K(0;t_0)}|u|
\quad\mbox{for}\quad x'\in {\cal B}_{R/2}^K,\ \ t\in (t_1,t_0).
\end{equation}

Note that the equation $\partial_tu-A_{ij}^kD_iD_ju=0$, $k=1,\dots, N$, has a solution
$w_k(x';t)=|x'|^{\lambda_{\cal D}^{(k)}}\Psi_k\Big(\frac{x'}{|x'|}\Big)$.
Therefore, function $\widetilde w_k(x';t)=|x'|^{\lambda}\Psi_k\Big(\frac{x'}{|x'|}\Big)$ is
a supersolution of this equation. The relations (\ref{qq}) and (\ref{qqq}) and the maximum principle
show that
\begin{equation}\label{qqqq}
|u(x';t)|\leq C(\lambda,\frac hR)\cdot \max\limits_{k=1,\dots,N}\{\widetilde w_k(x';t)\}\cdot h^{-\lambda}
\sup_{{\cal Q}_{\kappa R}^K(0;t_0)}|u|
\end{equation}
for $x'\in {\cal B}_{R/2}^K$, $t\in (t_1,t_0)$. Since $h>C(N)R$, the statement {\bf 8} follows.
\end{proof}

\begin{Rem} It will be interesting to study the question about
validity of property {\bf 8} from Theorem \ref{lambda} for infinite number
of layers, i.e. in the case
$$\dots<T_{-2}<T_{-1}<T_0<T_1<T_2<\ldots,\qquad T_{\pm k}\to{\pm\infty}\quad\mbox{as}\quad k\to+\infty, 
$$
and $A_{ij}(t)$ is constant on intervals $(T_k,T_{k+1})$, $k\in\mathbb Z$.
\end{Rem}

\section{Estimate of Green's function in ${\cal K}\times\mathbb R$}\label{Greenest}

\subsection{Local estimates of solutions}\label{SecR1}

The following statement is quite standard.

\begin{lem}\label{Lem9k}
 {\rm (i)} Let $u\in {\cal V}_\loc(Q_R(x_0;t_0))$ solve the equation ${\cal L}u=0$ in $Q_R(x_0;t_0)$.
Then for any multi-index $\alpha$ the function $D^{\alpha}u$ belongs to ${\cal V}_\loc(Q_R(x_0;t_0))$ and also
solves ${\cal L}u=0$ in $Q_R(x_0;t_0)$.\medskip

{\rm (ii)} Let $u\in {\cal V}_\loc ({\cal Q}_{R}^K(0;t_0))$  and satisfy {\rm (\ref {0.5})}.
Then for any multi-index $\alpha$ with $\alpha'=0$ the function
$D^{\alpha}u$ belongs to ${\cal V}_\loc({\cal Q}_{R}^K(0;t_0))$ and satisfies {\rm (\ref {0.5})}.
\end{lem}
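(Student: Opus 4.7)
The plan is to prove both parts by the classical method of spatial difference quotients, exploiting the fact that the coefficients $A_{ij}(t)$ depend only on $t$ and not on $x$, so any rigid translation in $x$ commutes with the operator $\mathcal{L}$. For a direction $e_i$ and small $h\ne 0$, the translate $u(x+he_i;t)$ is a weak solution of $\mathcal{L}v=0$ on any subcylinder for which it is defined; consequently so is the difference quotient
$$
\Delta_h^i u(x;t)=\frac{u(x+he_i;t)-u(x;t)}{h}.
$$
A standard Caccioppoli-type local energy estimate for the equation $\mathcal{L}v=0$ (valid under the measurable-in-$t$ assumption (\ref{0.3}); see e.g.\ \cite[Ch.~III, \S 2]{LSU}) yields, for $R_1<R_2<R-|h|$,
$$
\|\Delta_h^i u\|_{\mathcal{V}(Q_{R_1}(x_0;t_0))}\le \frac{C}{R_2-R_1}\,\|\Delta_h^i u\|_{L_2(Q_{R_2}(x_0;t_0))}.
$$

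For part (i), the right-hand side is bounded as $h\to 0$ by $\|D_i u\|_{L_2(Q_{R_2+|h|}(x_0;t_0))}$, which is finite because $u\in\mathcal{V}_\loc$. Weak compactness gives a subsequence $\Delta_h^i u\rightharpoonup D_i u$ in $\mathcal{V}(Q_{R_1})$, the limit is itself a weak solution of $\mathcal{L}v=0$, and an elementary density/diagonal argument lets $R_1\uparrow R$. Iterating this step, first on $D_i u$ and then on higher derivatives in $Q_{R'}\Subset Q_R$, yields $D^\alpha u\in\mathcal{V}_\loc(Q_R(x_0;t_0))$ with $\mathcal{L}D^\alpha u=0$ for any multi-index $\alpha$.

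For part (ii), the same argument applies verbatim, provided one restricts attention to indices $i$ for which translation $x\mapsto x+he_i$ preserves $\mathcal{K}=K\times\mathbb{R}^{n-m}$. Since $K$ is a proper cone, this forces $i>m$, i.e.\ precisely the condition $\alpha'=0$. In that case $u(\cdot+he_i;\cdot)$ still vanishes on $\partial\mathcal{K}\times\mathbb{R}$, hence so does $\Delta_h^i u$, and the weak limit $D_i u$ satisfies (\ref{0.5}) on every interior subcylinder. Iteration proceeds as before. Conversely, for an $x'$-direction the translate leaves $\mathcal{K}$ at points near $\partial\mathcal{K}$, which is why the statement excludes such derivatives.

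The only delicate point, more a bookkeeping matter than a genuine obstacle, is verifying that the weak $\mathcal{V}$-limit is still a weak solution amenable to the next round of difference-quotienting; this follows because the weak formulation of $\mathcal{L}v=0$ involves only integration against smooth compactly supported test functions in $Q_{R_1}$, and both the time-derivative term and the elliptic term pass to the limit under weak $\mathcal{V}$-convergence of the spatial gradients together with $L_2$-convergence of the functions themselves on bounded time slices.
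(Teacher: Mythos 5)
Your argument is correct, and it is exactly the standard reasoning the paper has in mind when it states this lemma without proof ("quite standard"): since $A_{ij}$ depend only on $t$, spatial translations commute with ${\cal L}$ (in the wedge case only translations along the edge directions preserve ${\cal K}$ and the Dirichlet condition, whence the restriction $\alpha'=0$), so difference quotients are again solutions, the local energy estimate gives uniform ${\cal V}$-bounds, and the weak (weak-*) limit $D_iu$ is a solution, after which one iterates on nested subcylinders. No gaps beyond routine bookkeeping.
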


The next statement can be found (even for more general equations) in  \cite[Ch. III, Sect. 11
and 12]{LSU}.

\begin{prop}\label{Pr1a}
Let $u\in {\cal V}(Q_R(x_0;t_0))$ solve the equation ${\cal L}u=0$ in $Q_R(x_0;t_0)$. Then
$$
|D u|\le \frac{C}{R}\sup_{Q_{R}(x_0;t_0)} |u|\qquad \mbox{in}\quad
Q_{R/2}(x_0;t_0).
$$
\end{prop}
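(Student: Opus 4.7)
The plan is to exploit the crucial fact that the coefficients $A_{ij}(t)$ depend only on $t$, so the equation is translation invariant in the spatial variables and spatial differentiation preserves it. Concretely, Lemma \ref{Lem9k}\,(i) guarantees that for each $k=1,\dots,n$ the function $v:=D_k u$ still lies in ${\cal V}_\loc(Q_R(x_0;t_0))$ and satisfies ${\cal L}v=0$ in the same cylinder. Hence the problem of bounding $|Du|$ pointwise reduces to producing an $L_\infty$--estimate for another solution of the same homogeneous equation.

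Next, I would apply the local sup bound recalled in Remark \ref{Rem1} (the flat-cylinder version of (\ref{local}), taken from \cite[Ch. III, Sect. 8]{LSU}) to $v$ on the concentric cylinders $Q_{R/2}(x_0;t_0)\subset Q_{3R/4}(x_0;t_0)$:
\begin{equation*}
\sup_{Q_{R/2}(x_0;t_0)}|v|\le \frac{C}{R^{1+n/2}}\,\|v\|_{L_2(Q_{3R/4}(x_0;t_0))}.
\end{equation*}
This step relies only on the uniform ellipticity (\ref{0.3}) and $t$-measurability of $A_{ij}$, which are the only hypotheses available here.

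To control $\|Du\|_{L_2(Q_{3R/4})}$ I would invoke the classical Caccioppoli energy inequality: testing ${\cal L}u=0$ against $\eta^2 u$ with a smooth cutoff $\eta$ equal to $1$ on $Q_{3R/4}(x_0;t_0)$ and vanishing near $\partial'Q_R(x_0;t_0)$, then integrating by parts in $x$ (admissible because the $A_{ij}$ depend only on $t$), produces
\begin{equation*}
\|Du\|_{L_2(Q_{3R/4}(x_0;t_0))}\le \frac{C}{R}\,\|u\|_{L_2(Q_R(x_0;t_0))}.
\end{equation*}
Combining the two displays with the trivial bound $\|u\|_{L_2(Q_R)}\le CR^{1+n/2}\sup_{Q_R}|u|$ and summing over $k$ yields $|Du|\le CR^{-1}\sup_{Q_R}|u|$ on $Q_{R/2}(x_0;t_0)$, as required.

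No real obstacle is expected: this is the standard \emph{differentiate-the-equation $+$ mean-value bound $+$ Caccioppoli} recipe from the classical parabolic theory of \cite[Ch. III, Sect. 11--12]{LSU}, and indeed the proposition itself is quoted from that source. The one conceptual point to keep in view is that the passage to $v=D_k u$ is legitimate only because the coefficients are spatially constant, which is precisely why Lemma \ref{Lem9k}\,(i) is the indispensable input here.
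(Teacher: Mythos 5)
Your argument is correct. Note, though, that the paper does not prove Proposition \ref{Pr1a} at all: it is simply quoted from Lady\v{z}enskaja--Solonnikov--Ural'tseva \cite[Ch.~III, Sect.~11--12]{LSU}, so there is no ``paper proof'' to match; what you have written is essentially the classical argument behind that citation. The chain you use is sound: Lemma \ref{Lem9k}\,(i) (which precedes the proposition, so there is no circularity) gives that each $D_ku$ is again a ${\cal V}_\loc$-solution, the interior local boundedness estimate of \cite[Ch.~III, Sect.~8]{LSU} applies to it, and the Caccioppoli inequality is legitimate here because the coefficients depend only on $t$, so the non-divergence equation coincides with the divergence-form equation $\partial_t u - D_i\big(A_{ij}(t)D_j u\big)=0$ for which testing with $\eta^2u$ is the standard energy argument; combining with $\|u\|_{L_2(Q_R)}\le CR^{1+n/2}\sup_{Q_R}|u|$ gives exactly the stated bound with the right scaling in $R$. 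The only point worth making explicit is this reduction to divergence form (both the energy estimate and the De Giorgi--Nash type sup bound are formulated for divergence-structure equations), and that $u\in{\cal V}$ supplies the integrability needed to run the test-function computation; with that said, your proof is complete and consistent with the source the authors cite.
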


Iterating this inequality we arrive at

\begin{kor}\label{Korol1a} Let  $u\in {\cal V}(Q_R(x_0;t_0))$ solve the equation ${\cal L}u=0$  in $Q_R(x_0;t_0)$. Then
$$
|D^\alpha u|\le \frac{C}{R^{|\alpha|}}\sup_{Q_{R}(x_0;t_0)}
|u|\qquad \mbox{in}\quad Q_{R/2^{|\alpha|}}(x_0;t_0).
$$
\end{kor}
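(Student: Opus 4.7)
The plan is a straightforward induction on $|\alpha|$, using Proposition \ref{Pr1a} as the base case and Lemma \ref{Lem9k}(i) as the mechanism that allows the gradient estimate to be reapplied.

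For the base case $|\alpha|=1$ the inequality is exactly the content of Proposition \ref{Pr1a}. For the inductive step, suppose the estimate has been established for all multi-indices of order $|\alpha|-1$, and fix a multi-index $\alpha$ with $|\alpha|\ge 2$. Write $D^\alpha u = D_i(D^\beta u)$ where $|\beta|=|\alpha|-1$. By Lemma \ref{Lem9k}(i), the function $v:=D^\beta u$ belongs to ${\cal V}_{\loc}(Q_R(x_0;t_0))$ and again satisfies ${\cal L}v=0$ in $Q_R(x_0;t_0)$. This is the key point: the equation is linear with coefficients independent of $x$, so differentiation in $x$ preserves the class of solutions.

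Next I would apply Proposition \ref{Pr1a} to $v$ on the subcylinder $Q_{R/2^{|\alpha|-1}}(x_0;t_0)$ (centered at the same point, smaller radius). This yields
$$
|Dv| \le \frac{C\cdot 2^{|\alpha|-1}}{R}\sup_{Q_{R/2^{|\alpha|-1}}(x_0;t_0)}|v|\qquad\mbox{in}\quad Q_{R/2^{|\alpha|}}(x_0;t_0).
$$
By the inductive hypothesis applied to $v=D^\beta u$, the supremum on the right is bounded by $C R^{-(|\alpha|-1)}\sup_{Q_R(x_0;t_0)}|u|$. Combining these two estimates gives the required bound on $D^\alpha u$ in $Q_{R/2^{|\alpha|}}(x_0;t_0)$, with a new constant that depends on $|\alpha|$ (absorbed into the generic $C$).

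There is no real obstacle here; the only thing to keep track of is that each derivative forces a shrinking of the cylinder by a factor $1/2$, which matches the radius $R/2^{|\alpha|}$ appearing in the statement, and that the translation invariance in $t$ together with $x$-independence of $A_{ij}(t)$ is what makes Lemma \ref{Lem9k}(i) available so that each successive derivative of $u$ can be treated again as a solution. The constants $C=C(|\alpha|,\nu,n)$ multiply at each step but remain finite for each fixed $|\alpha|$.
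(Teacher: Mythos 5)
Your argument is correct and is exactly what the paper means by ``iterating this inequality'': Proposition \ref{Pr1a} applied repeatedly on concentric cylinders halved at each step, with Lemma \ref{Lem9k}(i) guaranteeing that each derivative $D^\beta u$ is again a solution (and lies in ${\cal V}$ of the strictly smaller cylinder, so the proposition applies). No gap; same approach as the paper.
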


The next statement can be extracted from \cite[Ch. III, Sect. 12]{LSU} and \cite[Sect. 3]{KN}.

\begin{prop}\label{L1aa}
For sufficiently small $\delta>0$, depending only on $K$,
the following assertion is valid. Let $x_0\in {\cal K}$, $r_{x_0}<\delta$ and $R\le |x_0'|/2$.
Suppose that $u\in {\cal V}({\cal Q}_R^K(x_0;t_0))$
 solves the equation ${\cal L}u=0$  in ${\cal Q}_R^K(x_0;t_0)$, and $u(x;t)=0$
for $x\in \partial{\cal K}$. Then
\begin{equation}\label{Kop1}
|D u (x;t)|\le
\frac{C}{R}\sup_{{\cal Q}_{R}^K(x_0;t_0)} |u|\qquad
\mbox{in}\quad  {\cal Q}_{R/8}^K(x_0;t_0),
\end{equation}
where   $C$ depends on $\nu$, $K$ and $\delta$.

Further, for $|\alpha'|\ge 2$ and arbitrary small $\varepsilon>0$
\begin{equation}\label{Kop1a}
d(x)^{|\alpha'|-2+\varepsilon}|D^{\alpha'} u(x;t)|\le
\frac{C}{R^{2-\varepsilon}}\sup_{{\cal Q}_{R}^K(x_0;t_0)} |u|\qquad
\mbox{in}\quad {\cal Q}_{R/8^{|\alpha'|}}^K(x_0;t_0),
\end{equation}
where   $C$ depends on $\nu$, $|\alpha'|$, $K$, $\delta$ and
$\varepsilon$.
\end{prop}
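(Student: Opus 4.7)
The plan is to reduce the problem to a half-space estimate by locally straightening the smooth part of $\partial{\cal K}$ that crosses the cylinder ${\cal Q}_R^K(x_0;t_0)$. First I would observe that the assumptions $r_{x_0}<\delta$ and $R\le|x_0'|/2$ imply $|x'|\ge|x_0'|/2>0$ throughout ${\cal B}_R^K(x_0)$, so the cylinder stays away from the edge $\{x'=0\}$ and only the smooth portion of $\partial K$, a ${\cal C}^{1,1}$ hypersurface, is visible. For $\delta$ small enough I would choose a ${\cal C}^{1,1}$ diffeomorphism $\Phi$ acting on the $x'$-variables only, independent of $x''$ and $t$, that maps a fixed neighborhood of the relevant piece of $\partial K$ onto a piece of $\{y_1=0\}$ and sends ${\cal K}$ locally onto $\{y_1>0\}\times\mathbb R^{n-m}$. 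Setting $\tilde u(y;t)=u(\Phi^{-1}(y);t)$, the function $\tilde u$ then satisfies a uniformly parabolic non-divergence equation $\widetilde{\cal L}\tilde u=0$ with coefficients bounded measurable in $t$ and Lipschitz in $y$, together with the Dirichlet condition $\tilde u=0$ on $\{y_1=0\}$.

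For part (i) I would invoke the standard boundary gradient bound for such equations, namely $|D\tilde u|\le CR^{-1}\sup|\tilde u|$ on cylinders of comparable size centered on the flat boundary, which is the content of \cite[Ch.~III, Sect.~12]{LSU} in the half-space setting. Combined with the interior gradient estimate of Proposition \ref{Pr1a} applied at points lying deeper inside ${\cal K}$, this covers all of ${\cal Q}_{R/8}^K(x_0;t_0)$, and transforming back through the bi-Lipschitz $\Phi$ yields (\ref{Kop1}).

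For (ii) the idea is to iterate (i) on tangential derivatives. Differentiation in the directions $y_2,\ldots,y_n$ commutes with the zero Dirichlet condition, so a boundary analogue of Lemma \ref{Lem9k}(ii) makes each such derivative of $\tilde u$ solve an equation of the same type, modulo an inhomogeneous term generated by derivatives of the Lipschitz coefficients of $\widetilde{\cal L}$. Applying the half-space weighted $L_p$ coercive estimates from \cite{KN} with weight $y_1^{|\alpha'|-2+\varepsilon}$, absorbing the inhomogeneity into lower-order information, and invoking a weighted Sobolev embedding would yield the pointwise bound $d(x)^{|\alpha'|-2+\varepsilon}|D^{\alpha'}\tilde u|\le CR^{\varepsilon-2}\sup|\tilde u|$. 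Normal derivatives of order $\ge 2$ are then recovered from tangential ones and $\partial_t\tilde u$ via the equation itself, and transforming back through $\Phi$, which preserves $d(x)$ up to constants, yields (\ref{Kop1a}).

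The main obstacle I expect is the inductive step for high normal derivatives: since the straightening map is only ${\cal C}^{1,1}$ and the coefficients $A_{ij}(t)$ are merely measurable in $t$, differentiating the transformed equation more than twice in the normal direction would exceed the regularity available for its coefficients. This is exactly the source of the strictly positive $\varepsilon$ in (\ref{Kop1a}); the borderline case $\varepsilon=0$ would demand a full $C^{2,\alpha}$ Schauder-type theory up to the boundary that is not accessible in the present setting.
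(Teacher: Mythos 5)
Your treatment of (\ref{Kop1}) is fine and matches what the paper has in mind (the paper gives no proof, only the references to \cite[Ch.~III, Sect.~12]{LSU} and \cite[Sect.~3]{KN}): away from the edge only the ${\cal C}^{1,1}$ lateral boundary is seen, one flattens it and uses the half-space boundary gradient bound together with Proposition \ref{Pr1a} in the interior. The problem is your part (ii). The estimate (\ref{Kop1a}) is claimed for \emph{every} $|\alpha'|\ge 2$, and your scheme --- differentiate the flattened equation tangentially, apply the weighted coercive estimates of \cite{KN}, recover normal derivatives from the equation, and pull back through $\Phi$ --- cannot reach $|\alpha'|\ge 3$: the flattening is only ${\cal C}^{1,1}$, so the transformed coefficients are merely Lipschitz in $y$ and the equation can be differentiated in space at most once; moreover, transporting $D^{\alpha'}_y\tilde u$ back to $D^{\alpha'}_x u$ requires derivatives of $\Phi$ of order $|\alpha'|$, which do not exist for $|\alpha'|\ge 3$. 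You notice this obstruction at the end but then claim it is ``exactly the source'' of the positive $\varepsilon$ in (\ref{Kop1a}); that is a misdiagnosis. The $\varepsilon$ is forced already for a \emph{flat} boundary and $x$-independent coefficients, purely by the measurability in $t$ (this is why the factors $r_x^{-\varepsilon}$, $r_y^{-\varepsilon}$ appear in Theorem \ref{T2s} and in \cite{KN}); it does not compensate for the lack of smoothness of $\Phi$ or of the transformed coefficients, so the gap for high $|\alpha'|$ remains open in your argument. Even for $|\alpha'|=2$, ``weighted Sobolev embedding would yield the pointwise bound'' is too quick: membership of $y_1^{\mu}D(D\tilde u)$ in $L_p$ does not give a pointwise bound on $y_1^{\varepsilon}D^2\tilde u$ without further work.

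The missing idea is to leave the flattened picture as soon as the up-to-boundary information of order one is secured, and to exploit that in the original variables the coefficients $A_{ij}(t)$ do not depend on $x$ at all. From the local $W^{2,1}_p$ estimate near the lateral boundary (large $p$, obtained by flattening and the half-space theory of \cite{KN}, with the $x$-dependence of the transformed coefficients handled by freezing) one gets a parabolic H\"older bound for $Du$ up to $\partial{\cal K}$ with any exponent $1-\varepsilon$ and constant $CR^{-(2-\varepsilon)}\sup|u|$. Then, for $x$ close to the boundary, apply the interior estimates (Lemma \ref{Lem9k}, Proposition \ref{Pr1a}, Corollary \ref{Korol1a}) on cylinders of radius comparable to $d(x)$: estimating $D(Du)(x)$ through the oscillation of $Du$ over such a cylinder gives $d(x)^{\varepsilon}|D^2u(x)|\le CR^{\varepsilon-2}\sup|u|$, and each further derivative of the solution (all $D^{\alpha}u$ again solve the equation because the coefficients are $x$-independent) costs exactly one factor $d(x)^{-1}$. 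This interior iteration at scale $d(x)$ is what the weight $d(x)^{|\alpha'|-2+\varepsilon}$ in (\ref{Kop1a}) records, and it yields the estimate for arbitrary $|\alpha'|\ge 2$ without ever differentiating the flattened equation or the map $\Phi$ more than the available regularity allows.
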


Taking into account Lemma \ref{Lem9k}, we obtain the following estimate.

\begin{kor}\label{Korol1b}
Let assumptions of {\rm Proposition \ref{L1aa}} be valid. If
$|\alpha'|\le1$, then
\begin{equation}\label{Kop1k}
|D^{\alpha} u (x;t)|\le
\frac{C}{R^{|\alpha|}}\sup_{{\cal Q}_{R}^K(x_0;t_0)} |u|\qquad
\mbox{in}\quad  {\cal Q}_{R/8^{|\alpha|}}^K(x_0;t_0),
\end{equation}
where $C$ depends on $\nu$, $|\alpha|$, $K$ and $\delta$.

If $|\alpha'|\ge 2$, then for arbitrary small $\varepsilon>0$
\begin{equation}\label{Kop1ak}
d(x)^{|\alpha'|-2+\varepsilon}|D^{\alpha} u(x;t)|\le
\frac{C}{R^{|\alpha''|+2-\varepsilon}}\sup_{{\cal Q}_{R}^K(x_0;t_0)}
|u|\qquad \mbox{in}\quad {\cal Q}_{R/8^{|\alpha|}}^K(x_0;t_0),
\end{equation}
where   $C$ depends on $\nu$,
$|\alpha|$, $K$, $\delta$ and $\varepsilon$.
\end{kor}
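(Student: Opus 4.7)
The strategy is to factor $D^\alpha = D^{\alpha'}\circ D^{\alpha''}$ and exploit the asymmetry: differentiation in the $x''$-directions preserves the equation \emph{together with} the boundary condition on $\partial{\cal K}$, while differentiation in the $x'$-directions must be handled by the boundary estimate of Proposition \ref{L1aa}. Concretely, set $v=D^{\alpha''}u$. By Lemma \ref{Lem9k}(ii) applied $|\alpha''|$ times, $v\in{\cal V}_{\loc}({\cal Q}_R^K(x_0;t_0))$ and $v$ still satisfies (\ref{0.5}) (the coefficients $A_{ij}(t)$ are $x$-independent, so $x''$-derivatives commute with $\cal L$ and preserve the zero trace on $\partial{\cal K}$).

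The first step is to pass the estimate from $u$ to $v$ on a cylinder shrunk by $8^{|\alpha''|}$. I would iterate Proposition \ref{L1aa} applied to a single $x''$-derivative at a time: starting from ${\cal Q}_R^K(x_0;t_0)$, the estimate (\ref{Kop1}) yields $|D_j u|\le CR^{-1}\sup_{{\cal Q}_R^K}|u|$ on ${\cal Q}_{R/8}^K(x_0;t_0)$ for any $j$; taking $j>m$ and using Lemma \ref{Lem9k}(ii) to guarantee that $D_j u$ again satisfies the hypotheses of Proposition \ref{L1aa} on ${\cal Q}_{R/8}^K(x_0;t_0)$ (the conditions $r_{x_0}<\delta$ and $R/8\le|x_0'|/2$ are preserved), I would repeat the argument. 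After $|\alpha''|$ iterations this gives
\[
\sup_{{\cal Q}_{R/8^{|\alpha''|}}^K(x_0;t_0)}|v|\le \frac{C}{R^{|\alpha''|}}\sup_{{\cal Q}_R^K(x_0;t_0)}|u|,
\]
where $C$ depends on $\nu,|\alpha''|,K,\delta$.

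For part (\ref{Kop1k}) with $|\alpha'|\le 1$: if $\alpha'=0$ this is already done since $D^\alpha u=v$ and $|\alpha|=|\alpha''|$. If $|\alpha'|=1$, apply (\ref{Kop1}) once more to $v$ on ${\cal Q}_{R/8^{|\alpha''|}}^K(x_0;t_0)$ (valid by Lemma \ref{Lem9k}(ii)) to control $|Dv|$, and in particular the single $x'$-derivative in $\alpha'$, on ${\cal Q}_{R/8^{|\alpha|}}^K(x_0;t_0)$. Combining with the previous display and using $|\alpha|=|\alpha''|+1$ produces (\ref{Kop1k}).

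For part (\ref{Kop1ak}) with $|\alpha'|\ge 2$: apply (\ref{Kop1a}) to $v$ on ${\cal Q}_{R/8^{|\alpha''|}}^K(x_0;t_0)$, which gives
\[
d(x)^{|\alpha'|-2+\varepsilon}|D^{\alpha'}v(x;t)|\le \frac{C\cdot 8^{|\alpha''|(2-\varepsilon)}}{R^{2-\varepsilon}}\sup_{{\cal Q}_{R/8^{|\alpha''|}}^K}|v|
\]
on ${\cal Q}_{R/8^{|\alpha'|+|\alpha''|}}^K(x_0;t_0)={\cal Q}_{R/8^{|\alpha|}}^K(x_0;t_0)$. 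Since partial derivatives commute in the interior, $D^{\alpha'}v=D^\alpha u$; substituting the bound on $v$ from the first step yields the claimed estimate with exponent $|\alpha''|+2-\varepsilon$ on $R$.

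I expect no serious obstacle here: the only points to be careful about are (a) verifying that the hypotheses $r_{x_0}<\delta$ and $R\le|x_0'|/2$ of Proposition \ref{L1aa} are inherited at each shrinking step (they are, because $x_0$ and $|x_0'|$ are unchanged and $R$ only decreases), (b) keeping track of the geometric factors so that exactly $R/8^{|\alpha|}$ appears at the end, and (c) checking that $x''$-differentiation indeed preserves the full hypothesis (\ref{0.5}), which is Lemma \ref{Lem9k}(ii).
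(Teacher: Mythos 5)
Your argument is correct and is essentially the paper's intended proof: the paper derives the corollary precisely by combining Proposition \ref{L1aa} with Lemma \ref{Lem9k}(ii) (applying the proposition to $D^{\alpha''}u$ and iterating the first-derivative bound to control $\sup|D^{\alpha''}u|$), which is exactly your factorization $D^{\alpha}=D^{\alpha'}\circ D^{\alpha''}$ with the shrinking radii $R/8^{k}$. The only cosmetic remark is that Lemma \ref{Lem9k}(ii) is stated for edge-centered cylinders ${\cal Q}_R^K(0;t_0)$, but its proof (tangential difference quotients in $x''$) applies verbatim to the cylinders ${\cal Q}_R^K(x_0;t_0)$ used here, as the paper implicitly assumes.
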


\begin{lem}\label{Laug19}
Let $u\in {\cal V}_\loc ({\cal Q}_{R}^K(0;t_0))$  and satisfy {\rm
(\ref {0.5})}. Let also $\lambda<\lambda_c({\cal K},{\cal L})$. Then
for any multi-index $\alpha$ such that $|\alpha'|\le1$ we have
\begin{equation}\label{0.4x}
|D^{\alpha}u(x;t)|\leq \frac {C(\lambda)}{R^{|\alpha|}}
\Big (\frac{|x'|}{R}\Big )^{\lambda-|\alpha'|}\sup_{ {\cal Q}_{3R/4}^K(0;t_0)}|u|
\quad\mbox{for}\quad (x;t)\in {\cal Q}_{R/2}^K(0;t_0).
\end{equation}
\end{lem}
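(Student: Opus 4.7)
The plan is to split on $\rho := |x'|$. When $\rho \gtrsim R$ (say $\rho \geq R/16$), the weight $(|x'|/R)^{\lambda-|\alpha'|}$ is bounded above and below by constants, and the claim reduces to $|D^{\alpha}u(x;t)| \leq C R^{-|\alpha|}\sup|u|$; for $|\alpha'| \leq 1$ this is supplied by Corollary \ref{Korol1a} in the interior (when $r_x \geq \delta$) and by the boundary estimate (\ref{Kop1k}) in Corollary \ref{Korol1b} near $\partial {\cal K}$, applied on a cylinder of radius $\sim R/32$ sitting inside ${\cal Q}_{3R/4}^K(0;t_0)$.

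The substantive case is $\rho \ll R$, which I would handle in two sub-cases. For $|\alpha'| = 0$, Lemma \ref{Lem9k}(ii) tells us $\widetilde u := D^{\alpha}u$ itself solves (\ref{0.5}). I would apply the $L^2$ form (\ref{0.4a}) of the definition of $\lambda_c^+$ (Remark \ref{Rem1}) to $\widetilde u$, centered at the shifted point $(0, x''; t)$ rather than $(0;t_0)$---permitted by translation-invariance of ${\cal K}$ and ${\cal L}$ in $x''$ and $t$, and necessary to keep the reference cylinder inside ${\cal Q}_{3R/4}^K(0;t_0)$ when $|x''|$ is near $R/2$---at a scale $R_1 \sim R$ chosen so that $(x;t) \in {\cal Q}_{R_1/2}^K(0, x''; t)$ and ${\cal Q}_{\kappa R_1}^K(0, x''; t) \subset {\cal Q}_{3R/4}^K(0;t_0)$. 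This produces
$$
|\widetilde u(x;t)| \leq C R_1^{-1-\frac n2} \Big(\frac{\rho}{R_1}\Big)^\lambda \|D^{\alpha} u\|_{L^2({\cal Q}_{\kappa R_1}^K(0, x''; t))}.
$$
The $L^2$ norm is then controlled by iterating the standard Caccioppoli inequality in the $x''$-direction $|\alpha|$ times---allowed because $D_{x''}$ commutes with ${\cal L}$ so each tangential derivative again solves (\ref{0.5})---yielding $\|D^{\alpha} u\|_{L^2} \leq C R^{-|\alpha|+(n+2)/2}\sup_{{\cal Q}_{3R/4}^K(0;t_0)}|u|$, and the target bound falls out.

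For $|\alpha'| = 1$, I would decompose $\alpha = e_j + \beta$ with $e_j$ a single $x'$-derivative and $\beta'=0$, so that $v := D^{\beta}u$ is again a solution by Lemma \ref{Lem9k}(ii). Apply the boundary gradient estimate (\ref{Kop1k}) in Corollary \ref{Korol1b} to $v$ at scale $\rho/2 \leq |x'|/2$:
$$
|D^{\alpha}u(x;t)| = |Dv(x;t)| \leq \frac{C}{\rho}\,\sup_{{\cal Q}_{\rho/2}^K(x;t)}|v|.
$$
On ${\cal Q}_{\rho/2}^K(x;t)$ we have $|y'| \sim \rho$; applying the previous sub-case's bound to $v$ pointwise on this ball (using the same shifted center $(0, x''; s)$ so that ${\cal Q}_{\rho/2}^K(x;t)$ lies in the region of validity) gives $|v(y;s)| \leq C R^{-|\beta|}(\rho/R)^\lambda \sup|u|$. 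Substituting and using $|\alpha| = |\beta|+1$ reproduces the claim with the $(\rho/R)^{\lambda-1}$ factor.

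The main technical hurdle is coordinating the scales and centers: (\ref{0.4a}) requires its reference cylinder to be anchored at the edge $\{x'=0\}$, so it cannot be applied directly at an arbitrary $(x;t)\in{\cal Q}_{R/2}^K(0;t_0)$. Exploiting the $x''$-translation invariance to shift the center to $(0,x'';t)$ resolves this, and the Caccioppoli iteration for $x''$-derivatives is routine because they satisfy exactly the same equation as $u$.
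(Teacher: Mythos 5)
Your argument is correct and follows essentially the same route as the paper: for $\alpha'=0$, apply the $L^2$ form (\ref{0.4a}) of the critical-exponent definition to $D^\alpha u$ (a solution by Lemma \ref{Lem9k}(ii)) and close via Caccioppoli iterations in the $x''$-directions; for $|\alpha'|=1$, split off one $x'$-derivative, apply Proposition \ref{Pr1a} or (\ref{Kop1}) to $D^{\alpha''}u$ at scale $\sim|x'|$, and invoke the $\alpha'=0$ case on the resulting supremum. The re-centering at $(0,x'';t)$ and the preliminary split on $\rho\gtrsim R$ are harmless but unnecessary: the definition already gives the bound for every $(x;t)\in{\cal Q}_{R/2}^K(0;t_0)$ with the single reference cylinder ${\cal Q}_{\kappa R}^K(0;t_0)$, which lies inside ${\cal Q}_{3R/4}^K(0;t_0)$ once one chooses $\kappa\le 3/4$, so no shifting of the center is required.
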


\begin{proof}
First, let $\alpha'=0$. By Lemma \ref{Lem9k}, $D^{\alpha}u\in {\cal V}_\loc({\cal Q}_{R}^K(0;t_0))$.
Using the definition of the critical exponent and Remark \ref{Rem1}, we obtain
for $(x;t)\in {\cal Q}_{R/2}^K(0;t_0)$
\begin{equation}\label{0.4b}
|D^{\alpha}u(x;t)|\leq \frac {C(\lambda,\kappa)}{R^{1+\frac n2}}\ \Big(\frac{|x'|}{R}\Big )^\lambda
\Big (\int\limits_{ {\cal Q}_{\kappa R}^K(0;t_0)} |D^{\alpha}u(y;\tau)|^2dy\,d\tau\Big )^{1/2}
\end{equation}
with $\kappa\in (1/2,1)$.
The right-hand side of (\ref{0.4b}) can be estimated via iterations of the local estimate
(see, e.g., \cite[Ch. III, Sect. 2]{LSU})
\begin{equation*}
\|u\|_{{\cal V}({\cal Q}_{\kappa_1 R}^K(0;t_0))}\leq \frac {C(\kappa_1,\kappa_2)}R\,
\|u\|_{L_2({\cal Q}_{\kappa_2 R}^K(0;t_0))}
\end{equation*}
(here $\kappa_1<\kappa_2$). This gives
$$
|D^{\alpha}u(x;t)|\leq \frac {C(\lambda)}{R^{1+\frac n2+|\alpha|}}\ \Big
(\frac{|x'|}{R}\Big )^\lambda\Big (\int\limits_{ {\cal Q}_{7R/8}^K(0;t_0)}|u(y;\tau)|^2dy\,d\tau\Big )^{1/2},
$$
which implies (\ref{0.4x}).\medskip

For $|\alpha'|=1$ we apply Proposition \ref{Pr1a} and the estimate (\ref{Kop1}) to $D^{\alpha''}u$ and
use obtained estimate (\ref{0.4x}) for $\alpha'=0$.
\end{proof}

\subsection{Green's function in ${\cal K}\times\mathbb R$}

 Let us consider equation (\ref{Jan1}) in the whole space.
Using the Fourier transform with respect to $x$ we obtain the
following representation of solution through the right-hand side:
\begin{equation}\label{TTN1}
u(x;t)=\int\limits_{-\infty}^t\int\limits_{{\mathbb R}^n}
\Gamma(x,y;t,s) f(y;s)\ ds,
\end{equation}
where $\Gamma$ is the Green function of the operator ${\cal L}$
 given by
\begin{equation*}
\Gamma(x,y;t,s)= \frac { \det\big(\int\limits_s^t
A(\tau)d\tau\big)^{-\frac 12}} {(4\pi)^{\frac n2}}    \exp
\bigg(-\frac {\Big(\big(\int\limits_s^t A(\tau) d\tau\big)^{-1}
(x-y),(x-y)\Big)}4\bigg)
\end{equation*}
for $t>s$ and $0$ otherwise.  Here by $A(\tau)$ is denoted the matrix
$\{ A_{ij}(\tau)\}$. The above representation implies, in particular,
 the following estimates for $\Gamma$
\begin{equation}\label{TTN1h}
\Big |\partial_t^k D_x^\alpha D_y^\beta \Gamma(x,y;t,s)\Big |\leq
\frac{C({k,\alpha,\beta})}{(t-s)^{(n+2k+|\alpha|+|\beta|)/2}}\,\exp\left(-\frac{\sigma|x-y|^2}{t-s}\right),
\end{equation}
where $k\leq 1$ and $\alpha$ and $\beta$ are arbitrary
multi-indices. Here $\sigma$ is a positive constant depending on
$\nu$.\medskip

Departing from $\Gamma$ we can construct the Green function for the
Dirichlet problem in ${\cal K}\times\mathbb R$.

\begin{lem}\label{Greenexist}
For all $(y;s)\in{\cal K}\times \mathbb R$ there is a function
$u(x;t)=\Gamma_{\cal K}(x,y;t,s)$ satisfying
the Dirichlet problem
\begin{equation}\label{Ra1a}
{\cal L}u=\delta(x-y)\delta(t-s)\quad\mbox{in}\quad {\cal K}\times \mathbb R;\qquad
u\big|_{x\in\partial {\cal K}}=0;\qquad u\big|_{t<s}=0
\end{equation}
(the equation is understood in the sense of distributions). This
function admits the representation
\begin{equation}\label{TTN1am}
\Gamma_{\cal K}(x,y;t,s)=\chi(x;t)\Gamma
(x,y;t,s)+\widetilde\Gamma(x,y;t,s),
\end{equation}
where $\chi\in C_0^\infty ({\cal K}\times\mathbb R)$  is equal to
$1$ in a neighborhood of $(y;t)$ and the function
$w(x;t)=\widetilde\Gamma(x,y;t,s)$ belongs to ${\cal V}({\cal
K}\times \mathbb R)$. Moreover, the function $\Gamma_{\cal K}$
satisfies the estimate for $x,y\in{\cal K}$ and $t>s$:
\begin{equation}\label{TTN1a}
0\le\Gamma_{\cal K}(x,y;t,s)\leq C(t-s)^{-\frac n2}\,\exp\left(-\frac{\sigma|x-y|^2}{t-s}\right).
\end{equation}
\end{lem}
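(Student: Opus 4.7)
The plan is to isolate the singular part of the whole-space fundamental solution $\Gamma$ near $(y;s)$ and handle the remainder by standard parabolic solvability theory. Fix a source point $(y;s)\in{\cal K}\times\mathbb R$ and choose a cutoff $\chi\in C_0^\infty({\cal K}\times\mathbb R)$ that equals $1$ in a small neighborhood of $(y;s)$ and is supported in a compact subset of ${\cal K}\times\mathbb R$ bounded away from $\partial{\cal K}$. Using the Leibniz formula and the fact that ${\cal L}\Gamma=\delta(x-y)\delta(t-s)$ in the sense of distributions, I would compute
\[
{\cal L}(\chi\Gamma)=\delta(x-y)\delta(t-s)+F,\qquad F=(\partial_t\chi-A_{ij}D_iD_j\chi)\Gamma-2A_{ij}D_i\chi\, D_j\Gamma.
\]
The key observation is that $F$ is supported in the compact set where $\chi$ is not locally constant, which is disjoint from $(y;s)$; combined with the pointwise bound (\ref{TTN1h}) this shows that $F$ is bounded, smooth, and compactly supported, and in particular belongs to $L_2({\cal K}\times\mathbb R)$.

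The problem then reduces to finding $\widetilde\Gamma\in{\cal V}({\cal K}\times\mathbb R)$ solving
\[
{\cal L}\widetilde\Gamma=-F\ \text{in}\ {\cal K}\times\mathbb R,\qquad \widetilde\Gamma|_{x\in\partial{\cal K}}=0,\qquad \widetilde\Gamma\equiv0\ \text{for $t$ below the support of $F$}.
\]
Since $F\in L_2$ has compact support in $t$, this is a classical Cauchy--Dirichlet problem for a uniformly parabolic operator with bounded measurable time-dependent coefficients in the Lipschitz cylindrical domain ${\cal K}\times\mathbb R$. Solvability in ${\cal V}$, together with the energy estimate $\|\widetilde\Gamma\|_{{\cal V}}\le C\|F\|_{L_2}$, is standard and follows from Galerkin approximation combined with Gronwall's inequality as in \cite[Ch.~III]{LSU}. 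Setting $\Gamma_{\cal K}=\chi\Gamma+\widetilde\Gamma$ produces a function satisfying (\ref{Ra1a}) and the decomposition (\ref{TTN1am}).

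To prove (\ref{TTN1a}), I would argue by the maximum principle. For non-negativity, approximate $\delta(x-y)\delta(t-s)$ by a sequence $f_\varepsilon\ge 0$ of smooth mollifiers supported in a shrinking neighborhood of $(y;s)$; the corresponding ${\cal V}$-solutions $u_\varepsilon$ of ${\cal L}u_\varepsilon=f_\varepsilon$ with zero lateral and initial data are non-negative by the parabolic maximum principle in smooth exhausting subdomains of ${\cal K}$, and passing to the limit (testing against non-negative $\varphi\in C_0^\infty$ and using uniqueness of the Green function) gives $\Gamma_{\cal K}\ge0$. For the upper bound, set $v=\Gamma-\Gamma_{\cal K}=(1-\chi)\Gamma-\widetilde\Gamma$. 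On $\partial{\cal K}\times\mathbb R$ one has $v=\Gamma\ge0$, and on any slab $\{t>s+\varepsilon\}$ the function $v$ lies in ${\cal V}_{\loc}$ and solves ${\cal L}v=0$; the maximum principle therefore yields $v\ge 0$ on such a slab, and combining with (\ref{TTN1h}) for $k=|\alpha|=|\beta|=0$ produces the claimed Gaussian estimate.

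The main technical obstacle I expect is the justification of the maximum principle in the last step: the functions $\Gamma$ and $\Gamma_{\cal K}$ carry identical point singularities at $(y;s)$, so the initial-time behavior of their difference must be handled carefully. The natural fix is to work first on $\{t>s+\varepsilon\}$, where both kernels are bounded and $v$ is a genuine ${\cal V}_{\loc}$ weak solution with non-negative lateral data and initial slice bounded below by $-o(1)$ as $\varepsilon\downarrow 0$ (controlled by the energy bound for $\widetilde\Gamma$ and the continuity of $(1-\chi)\Gamma$ away from $(y;s)$); one then sends $\varepsilon\to 0$. The verification that $\Gamma_{\cal K}$ does not depend on the particular cutoff $\chi$ is a uniqueness consequence of the same energy estimate and is routine.
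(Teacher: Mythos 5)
Your proposal follows the same first step as the paper (subtract a cutoff times the whole-space kernel to isolate the singularity), but then diverges: you try to solve for $\widetilde\Gamma$ directly in the unbounded domain ${\cal K}\times\mathbb R$, whereas the paper exhausts ${\cal K}$ by the bounded sets ${\cal B}^K_R$, solves the regular problem there (where LSU applies verbatim), and passes to the monotone limit $R\to\infty$. This is more than a cosmetic difference: your appeal to \cite[Ch.~III]{LSU} for solvability in ${\cal V}({\cal K}\times\mathbb R)$ is not quite correct as stated, since those results are formulated for bounded spatial cross-sections. A direct Galerkin argument on the unbounded wedge does go through (separability of $\W^1_2({\cal K})$, the same energy identity, compact support of $F$ in $t$), but it is precisely this detail that the paper's exhaustion sidesteps. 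More importantly, the exhaustion gives both sides of the estimate (\ref{TTN1a}) at once: the maximum principle on the bounded cylinder shows $0\le\Gamma_{{\cal K},R}\le\Gamma$ and that $\Gamma_{{\cal K},R}$ increases with $R$, so the limit inherits both inequalities with no further work.

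Two points in your argument deserve attention. First, the non-negativity argument via mollification of $\delta$ is the weak link: to conclude $\int u_\varepsilon\varphi\to\int\Gamma_{\cal K}\varphi$ you invoke ``uniqueness of the Green function,'' but $u_\varepsilon$ is not a Green function, and the identification of the limit with $\Gamma_{\cal K}$ is exactly what the representation formula (which you are trying to establish) would give; the argument as sketched is circular and needs to be replaced, e.g.\ by a duality argument using the adjoint Green function of $\widehat{\cal L}$ or, more simply, by the bounded-domain maximum principle as in the paper. Second, the $\varepsilon$-slab device you propose for the upper bound is actually unnecessary: since $\chi\equiv 1$ near $(y;s)$, the function $(1-\chi)\Gamma$ vanishes identically in a space-time neighborhood of the singularity, and $\widetilde\Gamma$ vanishes for $t<s$ and is continuous into $L_2$ at $t=s$. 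Hence $v=\Gamma-\Gamma_{\cal K}=(1-\chi)\Gamma-\widetilde\Gamma$ is a genuine ${\cal V}_{\loc}$ solution of ${\cal L}v=0$ on all of ${\cal K}\times\mathbb R$ with $v\equiv 0$ for $t\le s$ and $v=\Gamma\ge 0$ on $\partial{\cal K}\times\mathbb R$; there is no singular initial layer, and the maximum principle applies directly.
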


\begin{proof}
Let $\zeta=\zeta(\tau)$ be a smooth function on $\mathbb R_+$, which is
equal to $1$ for $\tau<1/2$ and $0$ for $\tau>3/4$.
We put $\chi(x;t)=\zeta(|x-y|/d(y))\zeta(\sqrt{|t-s|}/d(y))$. Then
$\chi\in C_0^\infty ({\cal K}\times\mathbb R)$. For sufficiently large $R$ we introduce the notation
$\widetilde{\mathfrak Q}_R={\cal B}^K_R\times(s,\infty)$ and consider the Dirichlet problem
\begin{equation*}
{\cal L}u=-\partial_t\chi\,\Gamma+2A_{ij}D_i\chi\,D_{x_j}\Gamma+\Gamma A_{ij}D_iD_j\chi
\quad\mbox{in}\quad \widetilde{\mathfrak Q}_R;\qquad u\big|_{x\in\partial' \widetilde{\mathfrak Q}_R}=0.
\end{equation*}
This problem has a unique weak solution in the space ${\cal V}(\widetilde{\mathfrak Q}_R)$, see, e.g.,
\cite[Ch. III, Sect. 3 and 4]{LSU}. We denote this solution $\widetilde\Gamma_R(x,y;t,s)$, expand it by zero
for $t<s$ and consider the function
$$
\Gamma_{{\cal K},R}(x,y;t,s)=\chi(x;t)\Gamma (x,y;t,s)+\widetilde\Gamma_R(x,y;t,s).
$$
This function obviously solves the problem
\begin{equation*}
{\cal L}u=\delta(x-y)\delta(t-s)\quad\mbox{in}\quad {\cal B}^K_R\times \mathbb R;\qquad
u\big|_{x\in\partial {\cal B}^K_R}=0;\qquad u\big|_{t<s}=0.
\end{equation*}
It is easy to check that the definition of $\Gamma_{{\cal K},R}$
does not depend on the choice of the cut-off function. Using maximum
principle, we obtain that $\Gamma_{{\cal K},R}$ increases with
respect to $R$ for all values of arguments and satisfies
$0\le\Gamma_{{\cal K},R}(x,y;t,s)\leq \Gamma (x,y;t,s)$. Therefore,
there exists the limit function $\Gamma_{\cal K}$ satisfying
(\ref{Ra1a}) and (\ref{TTN1am}) with $\widetilde\Gamma\in {\cal
V}({\cal K}\times \mathbb R)$. The estimate (\ref{TTN1a}) follows from
(\ref{TTN1h}).
\end{proof}

Similarly one can construct the Green function
$u(x;t)=\widehat{\Gamma}_{\cal K}(x,y;t,s)$ for the  operator $\widehat{\cal L}$
defined in (\ref{0.51}). This function satisfies
\begin{equation*}
\widehat{\cal L}u=\delta(x-y)\delta(t-s)\quad\mbox{in}\quad {\cal K}\times \mathbb R;
\qquad u\big|_{x\in\partial {\cal K}}=0;\qquad u\big|_{t<s}=0.
\end{equation*}
By changing of variables $t\to -t$ and $s\to -s$, we conclude that the function
$v(x;t)=\widehat{\Gamma}_{\cal K}(x,y;-t,-s)$ solves the problem
\begin{equation*}
{\cal L}^*v=\delta(x-y)\delta(t-s)\quad\mbox{in}\quad {\cal K}\times
\mathbb R;\qquad v\big|_{x\in\partial {\cal K}}=0;\qquad
v\big|_{t>s}=0,
\end{equation*}
where $ {\cal L}^*=-\partial_t-A_{ij}(t)D_iD_j$ is operator formally adjoint to ${\cal L}$.

Using Green's formula for the functions $u(x;t)=\Gamma_{\cal K}(x,y_1;t,s_1)$ and
$v(x;t)=\widehat{\Gamma}_{\cal K}(x,y_2;-t,-s_2)$ with $s_1<t<s_2$, we get
$$
v(y_1;s_1)=\int\limits_{\mathbb R}\int\limits_{\cal K}{\cal L}u\cdot v\,dxdt=
\int\limits_{\mathbb R}\int\limits_{\cal K}u\cdot {\cal L}^*v\,dxdt=u(y_2;s_2),
$$
which means
\begin{equation}\label{2Okt1a}
{\Gamma}_{\cal K}(y,x;-s,-t)=\widehat{{\Gamma}}_{\cal K}(x,y;t,s).
\end{equation}

\begin{Rem}\label{Rem3}
In what follows, we obtain various pointwise estimates for the Green function.
These estimates depend only on the ellipticity constant $\nu$ and the critical exponents
$\lambda_c^{\pm}$; moreover, they contain $t$ and $s$ only in the combination $t-s$.
Therefore, such estimates are valid for Green's functions of operators $\cal L$ and
$\widehat {\cal L}$ simultaneously (with interchange of $\lambda_c^+$ and $\lambda_c^-$).

Note that in our paper \cite{KN} in the proofs of Lemma 4, Theorem 3 and Lemma 5 we
argued that the Green function (in the half-space) is symmetric with respect to $x$ and $y$.
In fact, this is not the case. However, all corresponding estimates are true just by
the statement of the previous paragraph, since in the case 
$m=1$ considered in \cite{KN} all the estimates depend only on $\nu$.
\end{Rem}

Note also that, since the operators ${\cal L}$ and $\widehat{\cal L}$ are invariant with
respect to translations along the edge, $\Gamma_{\cal K}$ and $\widehat{\Gamma}_{\cal K}$
depend actually on $x'$, $y'$ and the difference $x''-y''$.\medskip

We shall use the notations
$$
{\cal R}_x=\frac{|x'|}{|x'|+\sqrt{t-s}};\qquad
{\cal R}_y=\frac{|y'|}{|y'|+\sqrt{t-s}}.
$$

\begin{lem}\label{Green} For $\lambda^+<\lambda_c^+$ and $\lambda^-<\lambda_c^-$ the following inequality
\begin{equation}\label{May0}
| \Gamma_{\cal K}(x,y;t,s)|\le C\,{\cal R}_x^{\lambda^+}{\cal R}_y^{\lambda^-}(t-s)^{-\frac n2}
\,\exp\left(-\frac{\sigma_1|x-y|^2}{t-s}\right)
\end{equation}
holds for $x,y\in{\cal K}$ and $t>s$. Here $\sigma_1$ is a positive
constant depending only on the ellipticity constant $\nu$ and $C$
may depend on $\nu$ and $\lambda^\pm$.
\end{lem}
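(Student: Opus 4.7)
The strategy is to bootstrap the basic Gaussian bound (\ref{TTN1a}) into (\ref{May0}) in three stages: first extract a factor ${\cal R}_x^{\lambda^+}$, then a factor ${\cal R}_y^{\lambda^-}$ by duality, and finally combine the two by iterating the first step using the $y$-sided bound in place of (\ref{TTN1a}).

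In the first stage I would prove the one-sided bound
$$|\Gamma_{\cal K}(x,y;t,s)|\le C\,{\cal R}_x^{\lambda^+}\,(t-s)^{-n/2}\exp\!\left(-\frac{\sigma_1|x-y|^2}{t-s}\right).$$
When $|x'|\ge\sqrt{t-s}/2$ the quantity ${\cal R}_x$ is bounded from below, so the claim reduces to (\ref{TTN1a}). When $|x'|<\sqrt{t-s}/2$, fix $(y;s)$ and regard $u(\cdot;\cdot)=\Gamma_{\cal K}(\cdot,y;\cdot,s)$ as a solution of (\ref{0.5}). I would apply (\ref{0.4}) — after translation in $x''$, permitted by the $x''$-invariance of $\cal L$ — in the cylinder centered at $(0,x'';t)$ with $R=\sqrt{t-s}$ and some $\kappa\in(1/2,1)$. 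The outer time interval is $(t-\kappa^2(t-s),t]\subset(s,t]$, so $u$ satisfies (\ref{0.5}) in the whole cylinder. The resulting spatial gain $(|x'|/R)^{\lambda^+}$ is comparable to ${\cal R}_x^{\lambda^+}$, and the supremum is bounded using (\ref{TTN1a}); the triangle inequality $|z-y|\ge|x-y|-C\sqrt{t-s}$, together with a case split on whether $|x-y|$ exceeds a fixed multiple of $\sqrt{t-s}$, preserves the Gaussian at the cost of shrinking $\sigma$ to some $\sigma_1<\sigma$. The second stage, i.e.\ the symmetric bound with ${\cal R}_y^{\lambda^-}$, follows by applying the first-stage argument to $\widehat\Gamma_{\cal K}$ (whose critical exponent in the first spatial slot is $\lambda_c^+({\cal K},\widehat{\cal L})=\lambda_c^-$) and transferring via (\ref{2Okt1a}); the pointwise Gaussian bound for $\widehat\Gamma_{\cal K}$ analogous to (\ref{TTN1a}) is assured by Remark \ref{Rem3}.

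In the third stage I would repeat the first-stage argument verbatim, but bound the supremum by the second-stage estimate instead of (\ref{TTN1a}). Since $R=\sqrt{t-s}$ and $\tau\in(t-\kappa^2(t-s),t]$, the ratio $(\tau-s)/(t-s)$ lies in $[1-\kappa^2,1]$, so the time-$\tau$ analogue $|y'|/(|y'|+\sqrt{\tau-s})$ of ${\cal R}_y$ is uniformly comparable to ${\cal R}_y$; consequently the factor ${\cal R}_y^{\lambda^-}$ may be pulled out of the supremum. This yields (\ref{May0}) in the subregion $|x'|<\sqrt{t-s}/2$; on its complement the second-stage bound already gives (\ref{May0}) because ${\cal R}_x^{\lambda^+}$ is then bounded below. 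The main obstacle I expect is the Gaussian bookkeeping: each application of (\ref{0.4}) deteriorates the exponent constant by a definite amount, so I must check that the displacement $|z-x|$ of size at most $C\sqrt{t-s}$ can be absorbed by a small reduction in $\sigma$, and that the single choice $R\sim\sqrt{t-s}$ is simultaneously compatible with keeping $(y;s)$ outside the outer cylinder and with keeping $\tau-s$ comparable to $t-s$.
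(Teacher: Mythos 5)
Your three-stage bootstrap — extract ${\cal R}_x^{\lambda^+}$ from (\ref{0.4}) plus (\ref{TTN1a}), obtain the $y$-sided factor by applying the same argument to $\widehat\Gamma_{\cal K}$ and transferring via (\ref{2Okt1a}) and Remark \ref{Rem3}, then iterate the first stage with the $y$-sided bound in place of (\ref{TTN1a}) — is exactly the paper's proof. The minor differences (radius $\sqrt{t-s}$ vs.\ $\sqrt{t/2}$ after setting $s=0$, the case-split for the Gaussian bookkeeping) are cosmetic, and your checks that $\tau-s$ stays comparable to $t-s$ inside ${\cal Q}_{\kappa R}^K$ address the only genuine bookkeeping point.
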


\begin{proof} It is sufficient to prove the estimates for $s=0$ and
$x''=0$. First, let us prove the inequality
\begin{equation}\label{Oct24a}
\Gamma_{\cal K}(x,y;t,0)\leq C{\cal R}_x^{\lambda^+}\ t^{-\frac n2}
\exp\left(-\frac{\sigma_1|x-y|^2}{t}\right).
\end{equation}
If $|x'|^2\geq t/3$ then this estimate easily follows from (\ref{TTN1a}).
Consider the case $|x'|^2<t/3$. We take $R=\sqrt{t/2}$ and apply (\ref{0.4})
with $t_0=t$ to the function $u(x;t)=\Gamma_{\cal K}(x,y;t,0)$. This gives
\begin{equation}\label{Oct24b}
\Gamma_{\cal K}(x,y;t,0)\leq C\Big (\frac{|x'|}{\sqrt{t}}\Big)^{\lambda^+}
\sup_{{\cal Q}_{\kappa R}^K(0;t)}\Gamma_{\cal K}(z,y;\tau,0).
\end{equation}
From (\ref{TTN1a}) it follows that for $(z,\tau)\in {\cal Q}_{\kappa R}^K(0;t)$
the inequality
$$
\Gamma_{\cal K}(z,y;\tau,0)\leq C t^{-\frac n2}\,\exp\left(-\frac{\sigma_1|x-y|^2}{t}\right)
$$
holds with a certain positive $\sigma_1\leq\sigma$. Using this for
estimating the right-hand side in (\ref{Oct24b}), we obtain
(\ref{Oct24a}) and hence (\ref{May0}) for $\lambda^-=0$. By Remark
\ref{Rem3}, we get a similar to (\ref{Oct24a}) estimate for
$\widehat{\Gamma}_{\cal K}$:
\begin{equation}\label{Oct24ag}
\widehat{\Gamma}_{\cal K}(x,y;t,\tau)\leq C{\cal R}_x^{\lambda^-}\
(t-\tau)^{-\frac n2}
\exp\left(-\frac{\sigma_1|x-y|^2}{t-\tau}\right).
\end{equation}
Due to (\ref{2Okt1a}) this inequality leads to
\begin{equation}\label{Okt25a}
\Gamma_{\cal K}(x,y;t,\tau)\leq C{\cal R}_y^{\lambda^-}\
(t-\tau)^{-\frac n2}
\exp\left(-\frac{\sigma_1|x-y|^2}{t-\tau}\right).
\end{equation}
 Now we use the same arguments as above in order to prove
(\ref{May0}) in general case but instead of inequality
(\ref{TTN1a}) we use  (\ref{Oct24ag}) for estimating of supremum
in the right-hand side of (\ref{Oct24b}). The proof is complete.
\end{proof}

\begin{sats}\label{T2s} Let $\lambda^+<\lambda_c^+$, $\lambda^-<\lambda_c^-$,
 and let $|\alpha'|, \, |\beta'|\leq 2$.
For $x,y\in {\cal K}$, $t>s$ the following estimates are valid
\begin{eqnarray}\label{May3}
&&|D_{x}^{\alpha}D_{y}^{\beta} \Gamma_{\cal K}(x,y;t,s)|\le
C\,{\cal R}_x^{\lambda^+-|\alpha'|} {\cal
R}_y^{\lambda^--|\beta'|}r_x^{-\varepsilon}r_y^{-\varepsilon}
\nonumber\\
&&{(t-s)^{-\frac{n+|\alpha|+|\beta|}2}} \,\exp
\left(-\frac{\sigma_1|x-y|^2}{t-s}\right),
\end{eqnarray}
\begin{eqnarray}\label{May4}
&&|D_{x}^{\alpha}\partial_s \Gamma_{\cal K}(x,y;t,s)|\le C\,{\cal
R}_x^{\lambda^+-|\alpha'|} {\cal
R}_y^{\lambda^--2}r_x^{-\varepsilon}r_y^{-\varepsilon}
\nonumber\\
&&{(t-s)^{-\frac {n+|\alpha|+2}2}} \,\exp
\left(-\frac{\sigma_1|x-y|^2}{t-s}\right),
\end{eqnarray}
where $\sigma_1$ is a positive constant depending on $\nu$,
$\varepsilon$ is an arbitrary small positive number  and $C$ may
depend on $\nu$, $\lambda$, $\alpha$, $\beta$ and $\varepsilon$. If
$|\alpha'|\le 1$ {\rm(}or $|\beta'|\le 1${\rm)} then the factor
$r_x^{-\varepsilon}$ (respectively, $r_y^{-\varepsilon}$) must be
removed from the right-hand side.
\end{sats}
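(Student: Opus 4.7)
The bound extends Lemma \ref{Green} to derivatives by combining it with the local parabolic regularity of Section \ref{SecR1}. I would fix $y\in{\cal K}$, $s<t_0$, and view $v(x;t):=\Gamma_{\cal K}(x,y;t,s)$ as a solution of ${\cal L}v=0$ vanishing on $\partial{\cal K}$ away from the pole $(y;s)$. The main idea is to estimate $D_x^\alpha v(x_0;t_0)$ on a parabolic cylinder of radius $R\asymp\sqrt{t_0-s}$ (centered at $(x_0;t_0)$ if $x_0$ is away from the edge, and at $(0,x_0'';t_0)$ if it is close to the edge), so that the cylinder avoids the pole and Lemma \ref{Green} can be invoked on $\sup_{\text{cyl}}|v|$ to produce the Gaussian and ${\cal R}_y^{\lambda^-}$ factors.

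The choice of local estimate depends on where $x_0$ sits relative to the edge $\{x'=0\}$. When $|x_0'|\gtrsim\sqrt{t_0-s}$ (so ${\cal R}_{x_0}\sim 1$ and no edge factor is needed), I use Corollary \ref{Korol1a} for $\alpha'=0$ and Corollary \ref{Korol1b} otherwise; the latter gives $R^{-|\alpha|}$ for $|\alpha'|\le 1$ and $R^{-|\alpha''|-2+\varepsilon}d(x_0)^{-\varepsilon}$ for $|\alpha'|=2$, which combined with Lemma \ref{Green} yields (\ref{May3}) directly. When $|x_0'|<\sqrt{t_0-s}$, for $|\alpha'|\le 1$ I apply Lemma \ref{Laug19} on a cylinder centered at $(0,x_0'';t_0)$ of radius $\sqrt{t_0-s}$, which produces precisely ${\cal R}_{x_0}^{\lambda^+-|\alpha'|}$. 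For the delicate case $|\alpha'|=2$ with $x_0$ near the edge, I would first apply Corollary \ref{Korol1b} on the small cylinder ${\cal Q}_{|x_0'|/2}^K(x_0;t_0)$ — whose radius is compatible with the hypothesis $R\le|x_0'|/2$ — obtaining $d(x_0)^\varepsilon|D_x^\alpha v(x_0;t_0)|\le C|x_0'|^{-2+\varepsilon}\sup|D_x^{\alpha''}v|$, and then bound $|D_x^{\alpha''}v|$ on this cylinder via Lemma \ref{Laug19} (note that $D_x^{\alpha''}v$ still vanishes on $\partial{\cal K}$, since the edge direction $x''$ is tangential to $\partial{\cal K}$). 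Writing $d(x_0)^{-\varepsilon}=r_{x_0}^{-\varepsilon}|x_0'|^{-\varepsilon}$, a short computation shows that the $|x_0'|^{-\varepsilon}$ is absorbed cleanly into the edge factor and only $r_{x_0}^{-\varepsilon}$ remains.

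The pure $y$-derivative bound follows by the same scheme applied to $\widehat\Gamma_{\cal K}$: by (\ref{2Okt1a}), $D_y^\beta\Gamma_{\cal K}(x,y;t,s)$ equals the corresponding $x$-derivative of $\widehat\Gamma_{\cal K}$ evaluated at $(y,x;-s,-t)$, and Remark \ref{Rem3} guarantees all local estimates of Section \ref{SecR1} apply to $\widehat\Gamma_{\cal K}$ with $\lambda_c^+,\lambda_c^-$ swapped. For mixed derivatives I would fix $(y;s)$ and consider $w_\beta(x;t):=D_y^\beta\Gamma_{\cal K}(x,y;t,s)$; off the diagonal $w_\beta$ solves ${\cal L}w_\beta=0$ with zero Dirichlet data, so the $D_x^\alpha$ scheme above applies to $w_\beta$, with $\sup_{\text{cyl}}|w_\beta|$ controlled by the pure $y$-estimate just established. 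Finally, (\ref{May4}) reduces to (\ref{May3}) with $|\beta|=2$: the adjoint equation ${\cal L}^*\Gamma_{\cal K}(x,\cdot;t,\cdot)=0$ off the diagonal gives $\partial_s\Gamma_{\cal K}=-A_{ij}(s)D_{y_i}D_{y_j}\Gamma_{\cal K}$, and the worst sub-case $|\beta'|=2$ supplies the $r_y^{-\varepsilon}$ factor.

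The main obstacle is the $|\alpha'|=2$ step (and symmetrically $|\beta'|=2$): the Dirichlet-derivative loss $d(x)^{\varepsilon-|\alpha'|+2}$ in Corollary \ref{Korol1b}, which reflects the limited regularity of solutions up to the edge when $\omega$ is merely ${\cal C}^{1,1}$, must be combined with the edge decay of Lemma \ref{Laug19} so that only the mild $r_x^{-\varepsilon}$ penalty survives rather than the cruder $d(x)^{-\varepsilon}$. Tracking the Gaussian consistently through the two nested cylinders and verifying the constraint $R\le|x_0'|/2$ of Proposition \ref{L1aa} is the remaining bookkeeping.
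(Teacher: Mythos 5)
Your plan reproduces the paper's proof essentially step for step: the same case analysis ($|x'|$ versus $\sqrt{t-s}$, then distance to the lateral boundary), the same use of Lemma \ref{Laug19} and of Corollaries \ref{Korol1a}--\ref{Korol1b} applied to $D_x^{\alpha''}\Gamma_{\cal K}$ combined with Lemma \ref{Green}, the same passage to pure $y$-derivatives via (\ref{2Okt1a}) and Remark \ref{Rem3}, the same bootstrap for mixed derivatives (rerunning the $x$-scheme with the sup controlled by the pure $y$-estimate), and the same reduction of (\ref{May4}) to second $y$-derivatives through the adjoint equation. The only slip is allocating Corollary \ref{Korol1a} by the value of $\alpha'$ rather than by $r_x\ge\delta$ versus $r_x<\delta$ as the paper does (for $\alpha'=0$ at a point close to the lateral boundary the interior estimate forces a small radius and loses powers of $t-s$, but Corollary \ref{Korol1b}, valid for $|\alpha'|\le1$, covers that case), a harmless bookkeeping correction rather than a gap.
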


\begin{proof} It is sufficient to prove Theorem for $s=0$.
We start with the case $\beta=0$ in (\ref{May3}) and  consider two alternatives:
$|\alpha'|\le1$ and $|\alpha'|=2$.

Let $|\alpha'|\le1$. First suppose that $t\geq 4|x'|^2$.
Using Lemma \ref{Laug19} with $R=\sqrt{t}$, we obtain
$$
|D_x^{\alpha}\Gamma_{\cal K}(x,y;t,0)|\leq Ct^{-\frac
{|\alpha''|+\lambda^+}2}|x'|^{\lambda^+-|\alpha'|}
\sup_{(z,\tau)\in{\cal Q}_{3R/4}^K(x;t)}\Gamma_{\cal
K}(z,y;\tau,0).
$$
 By (\ref{May0})
$$
\Gamma_{\cal K}(z,y;\tau,0)\leq C {\cal R}_y^{\lambda^-}\
t^{-\frac n2} \exp\Big(-\sigma_2\frac{|x-y|^2}{t}\Big )\quad
\mbox{for}\quad (z,\tau)\in{\cal Q}_{3R/4}^K(x;t)
$$
with some positive $\sigma_2$. Therefore
$$
|D_x^{\alpha}\Gamma_{\cal K}(x,y;t,0)|\leq C {\cal
R}_x^{\lambda^+-|\alpha'|} {\cal R}_y^{\lambda^-}\ t^{-\frac
{n+|\alpha|}2} \exp\Big(-\sigma_2\frac{|x-y|^2}{t}\Big ),
$$
which gives (\ref{May3}) in the case $\beta=0$, $|\alpha'|\le1$ and $t\geq 4|x'|^2$.\medskip

Let now $t\leq 4|x'|^2$. Consider two subcases.\medskip

{\it Subcase 1}: $r_x\ge\delta$.
We put $R=\min \{\delta|x'|/2,\sqrt{t}/2\}$. Then the set $Q_R(x;t)$ belongs to
${\cal K}\times\mathbb R$. Applying the local estimate from Corollary \ref{Korol1a}
to the function $u(x;t)=\Gamma_{\cal K}(x,y;t,0)$, we obtain
\begin{equation}\label{Aug27a}
|D^\alpha_x \Gamma_{\cal K}(x,y;t,0)|\le
\frac{C}{R^{|\alpha|}}\sup_{(z,\tau)\in Q_{R}(x;t)} \Gamma_{\cal
K}(z,y;\tau,0),
\end{equation}
here $\alpha$ can be arbitrary multi-index. Using inequality
(\ref{May0}) to estimate the Green function in the right-hand
side, we obtain
\begin{equation}\label{Aug20a}
|D^\alpha_x \Gamma_{\cal K}(x,y;t,0)|\leq C {\cal
R}_x^{\lambda^+}{\cal R}_y^{\lambda^-} R^{-|\alpha|}t^{-\frac
n2}\exp\left(-\frac{\sigma_2|x-y|^2}{t}\right),
\end{equation}
which gives (\ref{May3}) in this subcase.\medskip

{\it Subcase 2}: $r_x<\delta$.
Using Corollary \ref{Korol1b} for the Green function, we get
\begin{equation}\label{Oct2sg}
|D^{\alpha}_x \Gamma_{\cal K}(x,y;t,0)|\le
\frac{C}{R^{|\alpha|}}\sup_{(z,\tau)\in {\cal Q}_{R}^K(x;t)}|\Gamma_{\cal K}(z,y;\tau,0)|,
\end{equation}
where $R=\sqrt{t}/4$. Using inequality (\ref{May0}), we obtain
\begin{equation}\label{Oct2sag}
|D^{\alpha}_x \Gamma_{\cal K}(x,y;t,0)|\le C {\cal
R}_y^{\lambda^-}\ t^{-\frac
{n+|\alpha|}2}\exp\left(-\frac{\sigma_2|x-y|^2}{t}\right),
\end{equation}
which leads to (\ref{May3}) in this subcase.

Thus (\ref{May3}) is proved in the case $\beta=0$ and $|\alpha'|\le1$.\medskip

We turn to the second alternative $|\alpha'|=2$ and also consider two subcases.\medskip

{\it Subcase 1}: $r_x\ge\delta$. We again put $R=\min \{\delta|x'|/2,\sqrt{t}/2\}$ and
apply the local estimate from Lemma \ref{Korol1a} to the function
$u(x;t)=D_x^{\alpha''}\Gamma_{\cal K}(x,y;t,0)$ in $Q_R(x;t)$. This leads to
$$|D^\alpha_x \Gamma_{\cal K}(x,y;t,0)|\le
\frac{C}{R^2}\sup_{(z,\tau)\in Q_R(x;t)} |D_x^{\alpha''}\Gamma_{\cal K}(z,y;\tau,0)|,
$$
and the estimate (\ref{May3}) with $\alpha'=0$ and $\beta=0$ gives
\begin{equation}\label{Sent1}
|D_x^{\alpha} \Gamma_{\cal K}(x,y;t,0)|\le \frac{C}{R^2}\, {\cal
R}_x^{\lambda^+} {\cal R}_y^{\lambda^-}\ t^{-\frac{n+|\alpha''|}2}
\, \exp\left(-\frac{\sigma_1|x-y|^2}{t-s}\right).
\end{equation}

{\it Subcase 2}:  $r_x<\delta$. Using
(\ref{Kop1a}) for $u(x;t)=D_x^{\alpha''}\Gamma_{\cal K}(x,y;t,0)$ with $R=\min \{|x'|/4, \sqrt{t}/2\}$,
we obtain
\begin{equation*}\label{Oct2s}
d(x)^{\varepsilon}|D^\alpha_x \Gamma_{\cal K}(x,y;t,0)|\le
\frac{C}{R^{2-\varepsilon}}\sup_{(z,\tau)\in {\cal Q}_{R}^K(x;t)}|
D_x^{\alpha''}\Gamma_{\cal K}(z,y;\tau,0)|,
\end{equation*}
and the estimate (\ref{May3}) with $\alpha'=0$ and $\beta=0$ leads to
\begin{equation}\label{Oct2sa}
d(x)^{\varepsilon}|D^\alpha_x \Gamma_{\cal K}(x,y;t,0)|\le
\frac{C}{R^{2-\varepsilon}}{\cal R}_x^{\lambda^+} {\cal
R}_y^{\lambda^-}\ t^{-\frac
{n+|\alpha''|}2}\exp\left(-\frac{\sigma_1|x-y|^2}{t}\right).
\end{equation}
As (\ref{Sent1}) as (\ref{Oct2sa}) imply
\begin{equation*}
|D^\alpha_x \Gamma_{\cal K}(x,y;t,0)|\le \frac
{C|x'|^2t}{R^2(|x'|+\sqrt{t})^2}\ \frac {{\cal R}_x^{\lambda^+-2}
{\cal R}_y^{\lambda^-}}{r_x^{\varepsilon}\ t^{\frac
{n+|\alpha|}2}} \exp\left(-\frac{\sigma_1|x-y|^2}{t}\right).
\end{equation*}
The first quotient here is bounded, and we arrive at (\ref{May3}).

Thus inequality (\ref{May3}) is proved in the case $\beta=0$ and
$|\alpha'|\leq 2$. By Remark \ref{Rem3}, we obtain that
\begin{eqnarray}\label{May33a}
&&|D_{x}^{\alpha} \widehat{\Gamma}_{\cal K}(x,y;t,s)|\le C\,{\cal
R}_x^{\lambda^--|\alpha'|} {\cal
R}_y^{\lambda^+}r_x^{-\varepsilon}
\nonumber\\
&&{(t-s)^{-\frac{n+|\alpha|}2}} \,\exp
\left(-\frac{\sigma_1|x-y|^2}{t-s}\right).
\end{eqnarray}
From (\ref{May33a}) and (\ref{2Okt1a}) it follows that
\begin{eqnarray}\label{May33ab}
&&|D_{y}^{\beta} \Gamma_{\cal K}(x,y;t,s)|\le C\,{\cal
R}_x^{\lambda^+} {\cal R}_y^{\lambda^--|\beta'|}r_y^{-\varepsilon}
\nonumber\\
&&{(t-s)^{-\frac{n+|\beta|}2}} \,\exp
\left(-\frac{\sigma_1|x-y|^2}{t-s}\right).
\end{eqnarray}
This gives inequality (\ref{May3}) for $\alpha=0$ and
$|\beta'|\leq 2$.

 In general case $|\alpha'|\leq 2$ and $|\beta'|\leq 2$,
  we  repeat the above proof (when $\beta=0$)
but instead of inequality (\ref{May0}) we use (\ref{May33ab}).
This gives (\ref{May3}) with arbitrary $|\alpha'|\leq 2$ and
$|\beta'|\leq 2$.

Finally, inequality (\ref{May4}) follows from (\ref{May3}), since
the derivative with respect to $s$ can be expressed through the
second derivatives with respect to $y$.
\end{proof}

\begin{Rem}\label{init2}
To study the initial-boundary value problem {\rm (\ref{init})}, we
need the Green function $\Gamma_{\cal K}(x,y;t,s)$ only for
$t>s\ge0$. In this case the estimates in Lemma \ref{Green} and
Theorem \ref{T2s} hold true for $t>s\ge0$ if we define $\lambda^+_c$
and $\lambda^-_c$ as described in Remark \ref{init1} (taking into
account the relation {\rm (\ref{2Okt1a})}).
\end{Rem}

In what follows we denote by the same letter the kernel  and the
corresponding integral operator, i.e.
\begin{equation}\label{Feb15a}
({\cal G}h)(x;t)=\int\limits_{-\infty}^t\int\limits_{\mathbb R^n}
{\cal G}(x,y;t,s)h(y;s)\,dyds.
 \end{equation}
If necessary, all functions are assumed to be expanded by zero to the whole space-time.

\section{The weighted estimates in a wedge}\label{Rn+2}

\begin{sats}\label{tildeLpq}
Let $1<p,q<\infty$, and let $\mu$ be subject to {\rm (\ref{mu})}. Suppose also that
$|x'|^\mu f\in \widetilde{L}_{p,q}({\cal K}\times\mathbb R)$. Then the function
\begin{equation}\label{Sept12a}
u(x;t)=\int\limits_{-\infty}^t\int\limits_{\mathbb R^n} {\Gamma}_{\cal K}(x,y;t,s)f(y;s)\,dyds
\end{equation}
solves the problem {\rm (\ref{Jan1a})} and satisfies the estimate {\rm (\ref{tilde})}.
\end{sats}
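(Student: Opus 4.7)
The plan is to first verify that the function $u$ in (\ref{Sept12a}) solves the boundary-value problem (\ref{Jan1a}), and then to establish the weighted norm estimate (\ref{tilde}). Solvability follows by a standard Green-function argument: $\Gamma_{\cal K}$ from Lemma \ref{Greenexist} satisfies the distributional equation (\ref{Ra1a}), and the pointwise bounds in Lemma \ref{Green} and Theorem \ref{T2s} legitimize differentiation under the integral, yielding ${\cal L}u=f$ in ${\cal K}\times\mathbb R$ with $u=0$ on $\partial{\cal K}\times\mathbb R$.

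For the norm estimate, writing the equation as $\partial_t u=A_{ij}(t)D_iD_j u+f$ and using the ellipticity bound (\ref{0.3}), the quantity $|\!|\!||x'|^\mu\partial_t u|\!|\!|_{p,q}$ is controlled by $|\!|\!||x'|^\mu D(Du)|\!|\!|_{p,q}$ and $|\!|\!||x'|^\mu f|\!|\!|_{p,q}$. Hence it suffices to prove
$$|\!|\!||x'|^\mu D(Du)|\!|\!|_{p,q}+|\!|\!||x'|^{\mu-2}u|\!|\!|_{p,q}\le C\,|\!|\!||x'|^\mu f|\!|\!|_{p,q}.$$
Setting $F(y;s)=|y'|^\mu f(y;s)$, each term on the left is the $\widetilde L_{p,q}$-norm of an integral operator applied to $F$, with kernel of the form ${\cal T}(x,y;t,s)=|x'|^{\mu-j}D_x^\alpha\Gamma_{\cal K}(x,y;t,s)|y'|^{-\mu}$, where $(j,|\alpha|)=(2,0)$ for the lower-order piece and $(j,|\alpha|)=(0,2)$ for the top-order piece. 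The pointwise estimates (\ref{May0}) and (\ref{May3}), together with the admissible range (\ref{mu}) for $\mu$, place these kernels inside the hypothesis of Proposition \ref{L_p} from the Appendix (the tool already used in the proof of Theorem \ref{lambda}, statement \textbf{6}), which yields the desired $\widetilde L_{p,q}$ boundedness. The powers of $\mathcal R_x$ and $\mathcal R_y$ appearing in (\ref{May0}) and (\ref{May3}) combined with the explicit weights $|x'|^{\mu-j}|y'|^{-\mu}$ produce exactly the exponent pair $(\lambda_1,\lambda_2)$ required by Proposition \ref{L_p}, and the two-sided condition in (\ref{mu}) corresponds to the simultaneous integrability in $x$ and $y$.

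The principal obstacle is the top-order piece with $|\alpha'|=2$, where (\ref{May3}) carries the extra singularity $r_x^{-\varepsilon}r_y^{-\varepsilon}$. To handle it I would exploit the decomposition $\Gamma_{\cal K}=\chi\Gamma+\widetilde\Gamma$ from Lemma \ref{Greenexist}. The contribution of $\chi\Gamma$ reduces, up to harmless lower-order terms controlled by the Gaussian decay, to a whole-space second-order singular integral with the Gaussian kernel $D_x^\alpha\Gamma$, which is bounded on $\widetilde L_{p,q}$ by the Calderón-Zygmund theorem of Krylov \cite{Kr2} (with the weighted adaptation developed in \cite{KN}). The contribution of $\widetilde\Gamma$, being regular in a neighbourhood of $y$, admits second-derivative bounds without the $r_x^{-\varepsilon}$ singularity, so Proposition \ref{L_p} applies directly. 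Finally, the freedom to choose $\lambda^\pm$ arbitrarily close to $\lambda_c^\pm$ and $\varepsilon>0$ arbitrarily small in (\ref{May3}) is exactly what recovers the sharp range (\ref{mu}) of admissible exponents.
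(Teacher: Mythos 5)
Your treatment of the lower-order term is essentially the paper's: the kernel $\frac{|x'|^{\mu-2}}{|y'|^\mu}\Gamma_{\cal K}$ is handled via (\ref{May0}) and Proposition \ref{L_p} (though note that Proposition \ref{L_p} only gives boundedness in $L_p$, i.e.\ $q=p$, and in $\widetilde L_{p,\infty}$; the paper still needs the generalized Riesz--Thorin theorem for $q\ge p$ and duality for $q<p$ to reach all $q$, a step you omit). The genuine gap is in the top-order piece. Proposition \ref{L_p} cannot be applied to the kernel $\frac{|x'|^{\mu}}{|y'|^\mu}D_{x_i}D_{x_j}\Gamma_{\cal K}$: the hypothesis (\ref{kernel_k}) requires $r>0$, whereas the second-derivative bound (\ref{May3}) has the borderline homogeneity $(t-s)^{-(n+2)/2}$, i.e.\ $r=0$, and in addition carries the factors $r_x^{-\varepsilon}r_y^{-\varepsilon}$ which Proposition \ref{L_p} does not tolerate. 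This is not a technicality: at this critical homogeneity pointwise kernel bounds alone cannot yield $L_{p,q}$-boundedness (the time integral diverges logarithmically); one needs cancellation, which is exactly why the paper never estimates $D(Du)$ through the Green kernel in this theorem (the second-derivative kernel appears only in Lemma \ref{weak_2}, where the mean-zero condition $\int h\,ds=0$ supplies the cancellation, and only for the non-tilde scale). Your proposed repair via $\Gamma_{\cal K}=\chi\Gamma+\widetilde\Gamma$ does not close this gap: outside the region $|x-y|\lesssim d(y)$ one has $\widetilde\Gamma=\Gamma_{\cal K}$, so $D_x^2\widetilde\Gamma$ retains both the $r_x^{-\varepsilon}$ singularity (which originates at the lateral boundary, not at the diagonal, and is therefore not removed by subtracting a parametrix near $y$) and the $r=0$ homogeneity; and the asserted weighted $\widetilde L_{p,q}$-boundedness of the truncated whole-space singular integral with the edge weight $|x'|^{\mu p}$ is not covered by \cite{Kr2}, \cite{Kr} or \cite{KN} (which treat the half-space weight, $m=1$), and $|x'|^{\mu p}$ need not even be an $A_p$ weight on the whole admissible range (\ref{mu}).

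The paper's actual route for the top-order terms avoids kernels entirely: on the sets $\widetilde{\cal P}_{\rho,2}(\xi'')$, whose distance to the edge is comparable to $\rho$, the weight $|x'|^\mu$ is essentially constant, so a localized version of the coercive estimate of \cite[Theorem 4]{KN} gives
$$
\int\limits_{\widetilde{\cal P}_{\rho,2}(\xi'')}\Big(\int\limits_{\mathbb R}\big(|\partial_t u|^q+|D(Du)|^q\big)dt\Big)^{p/q}dx
\le C\int\limits_{\widetilde{\cal P}_{2\rho,8}(\xi'')}\Big(\int\limits_{\mathbb R}\big(\rho^{-2q}|u|^q+|f|^q\big)dt\Big)^{p/q}dx,
$$
and summation over a suitable partition of unity bounds $|\!|\!||x'|^\mu\partial_t u|\!|\!|_{p,q}+|\!|\!||x'|^\mu D(Du)|\!|\!|_{p,q}$ by $|\!|\!||x'|^{\mu-2}u|\!|\!|_{p,q}+|\!|\!||x'|^\mu f|\!|\!|_{p,q}$. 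Thus the only global ingredient needed is the zero-order bound you (and the paper) obtain from (\ref{May0}); everything of second order is inherited from the flat-boundary theory. If you want to keep a kernel-based argument for $D(Du)$, you would have to prove a new boundedness lemma incorporating both the cancellation in $s$ and the $r_x^{-\varepsilon},r_y^{-\varepsilon}$ factors in the tilde scale -- none of the tools in the Appendix does this as stated.
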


\begin{proof}
It is easy to see that for $f\in{\cal C}^\infty_0({\cal K}\times\mathbb R)$ the function (\ref{Sept12a}) is a
solution of (\ref{Jan1a}). Therefore, it suffices to prove the estimate (\ref{tilde}).

First, we choose $0<\lambda^+<\lambda_c^+$ and $0<\lambda^-<\lambda_c^-$ such that
\begin{equation*}
2-\frac mp-\lambda^+<\mu<m-\frac mp+\lambda^-
\end{equation*}
and observe, that by (\ref{May0}) the kernel ${\cal G}=\frac{|x'|^{\mu-2}}{|y'|^\mu}\cdot\Gamma_{\cal K}(x,y;t,s)$
satisfies the assumptions of Proposition \ref{L_p} (with $\lambda_1=\lambda^+-2$, $\lambda_2=\lambda^-$, $r=2$).
Therefore, the operator ${\cal G}$ is bounded in $L_p(\mathbb R^n\times\mathbb R)$ and in
$\widetilde L_{p,\infty}(\mathbb R^n\times \mathbb R)$. Generalized Riesz--Thorin theorem, see, e.g.,
\cite[1.18.7]{Tr}, shows that this operator is bounded in $\widetilde L_{p,q}(\mathbb R^n\times\mathbb R)$
for any $q\ge p$. For $q<p$ this statement follows by duality arguments. Thus, we obtain the inequality
\begin{equation}\label{tilde-zero}
|\!|\!| |x'|^{\mu-2} u|\!|\!|_{p,q}\le C\ |\!|\!||x'|^\mu f|\!|\!|_{p,q}.
\end{equation}

Now we consider a point $\xi'' \in{\bf R}^{n-m}$, and, given $\rho>0$, $\gamma>1$ define
$$\widetilde{\cal P}_{\rho,\gamma}(\xi'')={\cal B}^K_{\rho}(0,\xi'')\setminus
{\cal B}^K_{\frac {\rho}\gamma,\rho}(0,\xi'')=\Big\{x\in{\cal K}\, :\, \frac{\rho}{\gamma}<
|x'| < \rho,\ |x''-\xi''|<\rho\Big\}.
$$
For any $\xi'' \in{\mathbb R}^{n-m}$ and $\rho>0$ the inequality
\begin{multline*}
\int\limits_{\widetilde{\cal P}_{\rho,2}(\xi'')}
\bigg( \int\limits_{\mathbb R}\left(|\partial_t u|^q+|D(Du)|^q\right)\, dt\bigg)^{p/q} dx \\
\le C\int\limits_{\widetilde{\cal P}_{2\rho,8}(\xi'')}\bigg( \int\limits_
{\mathbb R}\left(\rho^{-2q}|u|^q+|f|^q\right) \, dt\bigg)^{p/q} dx
\end{multline*}
easily follows from localization of the estimate \cite[Theorem 4]{KN}.

Using a proper partition of unity in ${\cal K}$, we arrive at
\begin{multline*}
\int\limits_{\cal K}\bigg(\int\limits_{\mathbb R}\left(|\partial_tu|^q+|D(Du)|^q
\right)\, dt\bigg)^{p/q}|x'|^{\mu p} dx\\
\le C\bigg(\int\limits_{\cal K}\bigg(\int\limits_{\mathbb R}|u|^q
\, dt\bigg)^{p/q}|x'|^{\mu p-2p} dx + \int\limits_{\cal K}\bigg(
\int\limits_{\mathbb R}|f|^q\, dt\bigg)^{p/q}|x'|^{\mu p} dx\bigg).
\end{multline*}
This immediately implies (\ref{tilde}) with regard of
(\ref{tilde-zero}).
\end{proof}

To deal with the scale $L_{p,q}$, we need the following lemma.

\begin{lem}\label{weak_2}
Let a function $h$ be supported in the layer $|s-s^0|\le\delta$ and
satisfy $\int h(y;s)\ ds\equiv 0$. Also let $p\in (1,\infty)$ and
$\mu$ be subject to {\rm (\ref{mu})}. Then the integral operators
${\cal G}$ and ${\cal G}_{ij}$ with kernels
$${\cal G}=\frac{|x'|^{\mu-2}}{|y'|^\mu}\cdot\Gamma_{\cal K}(x,y;t,s);\qquad
{\cal G}_{ij}=\frac{|x'|^\mu}{|y'|^\mu}\cdot
D_{x_i}D_{x_j}\Gamma_{\cal K}(x,y;t,s)$$ satisfy
$$\int\limits_{|t- s^0|>2\delta}\Vert ({\cal G}h)(\cdot; t)\Vert_p\ dt\le
C\,\Vert h\Vert_{p,1};\qquad \int\limits_{|t- s^0|>2\delta}\Vert
({\cal G}_{ij}h)(\cdot; t)\Vert_p\ dt\le C\,\Vert h\Vert_{p,1},$$

\noindent where $C$ does not depend on $\delta$ and $ s^0$.
\end{lem}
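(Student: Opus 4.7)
The estimates have the classical H\"ormander-type structure, and I would approach them by exploiting the mean-zero condition on $h$ in the time variable together with the pointwise bounds of Theorem \ref{T2s}. First, the case $t<s^0-2\delta$ is trivial, since then $t<s$ for every $s$ in the support of $h$, so $\Gamma_{\cal K}(\cdot,\cdot;t,s)$ and $D_{x_i}D_{x_j}\Gamma_{\cal K}(\cdot,\cdot;t,s)$ vanish identically and $({\cal G}h)(\cdot;t)=({\cal G}_{ij}h)(\cdot;t)=0$. Thus it suffices to establish the inequality on the range $t>s^0+2\delta$.

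For such $t$, the cancellation $\int h(y;s)\,ds\equiv 0$ lets me insert a base point and write
\begin{equation*}
({\cal G}h)(x;t)=\int_{|s-s^0|\le\delta}\!\int_{\mathbb R^n}\bigl[{\cal G}(x,y;t,s)-{\cal G}(x,y;t,s^0)\bigr]h(y;s)\,dy\,ds,
\end{equation*}
and likewise for ${\cal G}_{ij}$. Newton--Leibniz in the $s$-variable then gives ${\cal G}(x,y;t,s)-{\cal G}(x,y;t,s^0)=\int_{s^0}^{s}\partial_\tau{\cal G}(x,y;t,\tau)\,d\tau$. Since $|s-s^0|\le\delta<(t-s^0)/2$, the parameter $\tau$ remains in a range on which $t-\tau$ is comparable to $t-s^0$. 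After applying Minkowski's inequality in $x$, the proof is reduced to a uniform operator-norm bound for the fixed-time kernel of the form
\begin{equation*}
\|\partial_\tau{\cal G}_{t,\tau}\|_{L_p(\mathbb R^n)\to L_p(\mathbb R^n)}\le\frac{C}{(t-\tau)^2},
\end{equation*}
and similarly for $\partial_\tau{\cal G}_{ij,t,\tau}$.

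This operator-norm bound is where I would combine (\ref{May4}) from Theorem \ref{T2s} with Proposition \ref{L_p}. The key observation is that the weight $|x'|^{\mu-2}/|y'|^\mu$ entering ${\cal G}$ is homogeneous of degree $-2$ in $x,y$, while the pointwise bound on $\Gamma_{\cal K}$ has the standard parabolic scaling; the rescaling $x=\sqrt{t-\tau}\,\tilde x$, $y=\sqrt{t-\tau}\,\tilde y$ therefore converts ${\cal G}_{t,\tau}$ into $(t-\tau)^{-1}$ times a scale-one normalized operator whose $L_p$-boundedness is exactly what Proposition \ref{L_p} delivers under the hypothesis (\ref{mu}). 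Differentiation in $\tau$ contributes one additional factor $(t-\tau)^{-1}$ (visible already in the exponent $-(n+|\alpha|+2)/2$ of (\ref{May4})), hence the bound $(t-\tau)^{-2}$. For ${\cal G}_{ij}$ the weight $|x'|^\mu/|y'|^\mu$ is homogeneous of degree $0$, but the two spatial derivatives $D_{x_i}D_{x_j}\Gamma_{\cal K}$ pointwise provide the missing $(t-\tau)^{-1}$ (again by (\ref{May4}) with $|\alpha'|=2$), so the same scaling yields the same $(t-\tau)^{-2}$ bound.

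Once the operator-norm estimate is in hand, the conclusion follows by elementary integration:
\begin{equation*}
\int_{t>s^0+2\delta}\|({\cal G}h)(\cdot;t)\|_p\,dt\le C\delta\,\|h\|_{p,1}\int_{2\delta}^\infty\frac{du}{u^2}= C'\|h\|_{p,1},
\end{equation*}
with $C'$ independent of $\delta$ and $s^0$, and the analogous computation for ${\cal G}_{ij}$. The main obstacle will be the operator-norm estimate itself: the factor ${\cal R}_y^{\lambda^--2}$ in (\ref{May4}) has a negative exponent and creates a genuine $|y'|^{-2}$-type singularity when $|y'|\ll\sqrt{t-\tau}$, while the $r_y^{-\varepsilon}$ factor adds a mild boundary singularity, so a careful decomposition of $\partial_\tau{\cal G}$ into several pieces matching the kernel hypothesis (\ref{kernel_k}) of Proposition \ref{L_p} (with appropriately shifted parameters $\lambda_1,\lambda_2$) will be required.
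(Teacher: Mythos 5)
Your opening moves (discarding $t<s^0-2\delta$, inserting the base point $s^0$ via the mean-zero condition, writing the difference as $\int_{s^0}^{s}\partial_\tau{\cal G}\,d\tau$, and reducing by Minkowski to a kernel/operator bound in $x$ followed by an elementary $t$-integration) coincide with the structure of the paper's argument. The gap is in the step you yourself flag as "the main obstacle", and it is not repairable in the way you suggest. If you take the \emph{full} gain $\delta/(t-\tau)$ from the time derivative, the pointwise bound (\ref{May4}) forces the $y$-decay exponent down from ${\cal R}_y^{\lambda^-}$ to ${\cal R}_y^{\lambda^--2}$, i.e.\ the relevant kernel now satisfies (\ref{kernel_k}) only with $\lambda_2=\lambda^--2$. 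Proposition \ref{L_p} (or any Schur/scaling argument for the rescaled fixed-time operator) then needs the weight condition (\ref{mu_m}), namely $\mu<m-\frac mp+\lambda^--2$, whereas the hypothesis (\ref{mu}) only gives $\mu<m-\frac mp+\lambda_c^-$. For $\mu$ in the upper part of the admissible interval the singularity $|y'|^{\lambda^--2-\mu}$ near the edge is genuinely non-integrable against the weight $|y'|^{-\mu}$ in the dual Schur integral, so the claimed operator-norm bound $\|\partial_\tau{\cal G}_{t,\tau}\|_{L_p\to L_p}\le C(t-\tau)^{-2}$ is false in that range; no decomposition of $\partial_\tau{\cal G}$ into pieces "with shifted parameters" can restore it, because the loss of $2$ in $\lambda_2$ is intrinsic to differentiating twice in $y$ (equivalently once in $s$).

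The missing idea is an interpolation between two bounds on the \emph{difference} ${\cal G}(x,y;t,s)-{\cal G}(x,y;t,s^0)$: the mean-value bound just described (gain $\delta/(t-s)$, loss ${\cal R}_y^{\lambda^--2}$) and the direct size bound coming from (\ref{May0}) and (\ref{May3}) (no gain in $\delta$, but the full ${\cal R}_y^{\lambda^-}$). Taking the geometric mean with a small exponent $\varepsilon$ yields a kernel bounded by $\delta^{\varepsilon}(t-s)^{-\frac n2-\varepsilon}{\cal R}_x^{\lambda^+}{\cal R}_y^{\lambda^--2\varepsilon}$ (times the weights and the Gaussian, and with $r_y^{-\varepsilon}$ softened accordingly), and since the inequalities in (\ref{mu}) are strict one can first pick $\lambda^\pm$ and then $\varepsilon$ so small that $\mu<m-\frac mp+\lambda^--2\varepsilon$ still holds. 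This fractional gain $\delta^{\varepsilon}(t-s)^{-\varepsilon}$ is still summable over $t>s^0+2\delta$ uniformly in $\delta$, which is exactly what the weighted $L_{p,1}$ kernel lemma (Lemma \ref{L_p_1}, hypothesis (\ref{kernel_k1}) with $\varkappa=\varepsilon$) is designed to exploit; your elementary computation $\delta\int_{2\delta}^{\infty}u^{-2}du$ would have to be replaced by $\delta^{\varepsilon}\int_{2\delta}^{\infty}(t-s)^{-1-\varepsilon}dt$. With that interpolation inserted, the rest of your outline (including the separate treatment of ${\cal G}_{ij}$, where the extra loss ${\cal R}_x^{\lambda^+-2}$ on the $x$-side is harmless because the weight there is $|x'|^{\mu}$ rather than $|x'|^{\mu-2}$) goes through.
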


\begin{proof}
By $\int h(y; s)\, ds\equiv 0$, we have
\begin{equation}\label{difference1}
({\cal G}h)(x;t)=\int\limits_{-\infty}^{t}\int\limits_{\cal K} \Bigl({\cal
G}(x,y;t, s)-{\cal G}(x,y;t,s^0)\Bigr)\, h(y; s)\, dy\, ds;
\end{equation}
\begin{equation}\label{difference2}
({\cal G}_{ij}h)(x;t)=\int\limits_{-\infty}^{t}\int\limits_{\cal K}
\Bigl({\cal G}_{ij}(x,y;t, s)-{\cal G}_{ij}(x,y;t,s^0)\Bigr)\, h(y;
s)\, dy\, ds.
\end{equation}
We choose $0<\lambda^+<\lambda_c^+$, $0<\lambda^-<\lambda_c^-$ and
$0<\varepsilon<\min\{\frac 1p,1-\frac 1p\}$ such that
\begin{equation}\label{mueps}
2-\frac mp-\lambda^+<\mu<m-\frac mp+\lambda^--2\varepsilon.
\end{equation}
For $| s- s^0|<\delta$ and $t- s^0>2\delta$, estimates (\ref{May4})
with $|\alpha|=0$ and $|\alpha|=2$ imply
\begin{eqnarray*}
&&\left|{\cal G}(x,y;t, s)-{\cal G}(x,y;t, s^0)\right|
\le\int\limits_{s^0}^s|\partial_\tau {\cal G}(x,y;t,\tau)|\,d\tau\\
&&\le C\,\frac {{\cal R}^{\lambda^+}_{x} {\cal R}^{\lambda^--2}_{y}\,|x'|^{\mu-2}}
{(t- s)^{\frac n2}\,|y'|^{\mu}\,r_y^{\varepsilon}} \,
 \frac {\delta} {t- s}\, \exp \left(-\frac {\sigma|x-y|^2}{t- s}\right);
\end{eqnarray*}
\begin{eqnarray*}
&&\left|{\cal G}_{ij}(x,y;t, s)-{\cal G}_{ij}(x,y;t, s^0)\right|
\le\int\limits_{s^0}^s|\partial_\tau {\cal G}_{ij}(x,y;t,\tau)|\,d\tau\\
&&\le C\,\frac {{\cal R}^{\lambda^+-2}_{x} {\cal R}^{\lambda^--2}_{y}\,|x'|^{\mu}}
{(t- s)^{\frac {n+2}2}\,|y'|^{\mu}\,r_x^{\varepsilon}r_y^{\varepsilon}} \,
 \frac {\delta} {t- s}\, \exp \left(-\frac {\sigma|x-y|^2}{t- s}\right).
\end{eqnarray*}
On the other hand, estimates (\ref{May0}) and (\ref{May3}) with
$|\alpha|=2$, $|\beta|=0$ gives
\begin{equation*}
\left|{\cal G}(x,y;t, s)-{\cal G}(x,y;t, s^0)\right| \le C\,\frac
{{\cal R}^{\lambda^+}_{x}{\cal R}^{\lambda^-}_{y}\,|x'|^{\mu-2}}
{(t-s)^{\frac n2}\,|y'|^{\mu}} \, \exp\left(-\frac{\sigma|x-y|^2}{t-s} \right).
\end{equation*}
\begin{equation*}
\left|{\cal G}_{ij}(x,y;t, s)-{\cal G}_{ij}(x,y;t, s^0)\right| \le
C\,\frac {{\cal R}^{\lambda^+-2}_{x}{\cal R}^{\lambda^-}_{y}\,|x'|^{\mu}}
{(t-s)^{\frac {n+2}2}\,|y'|^{\mu}r_x^{\varepsilon}} \,
\exp\left(-\frac{\sigma|x-y|^2}{t- s} \right).
\end{equation*}
Combination of these estimates gives
\begin{equation*}
\left|{\cal G}(x,y;t, s)-{\cal G}(x,y;t, s^0)\right| \le C\,\frac
{\delta^{\varepsilon}\,{\cal R}^{\lambda^+}_{x}{\cal R}^{\lambda^--2\varepsilon}_{y}
\,|x'|^{\mu-2}} {(t-s)^{\frac n2+\varepsilon}\,|y'|^{\mu}\,r_y^{\varepsilon^2}} \,
\exp\left(-\frac{\sigma|x-y|^2}{t- s} \right).
\end{equation*}
\begin{equation*}
\left|{\cal G}_{ij}(x,y;t, s)-{\cal G}_{ij}(x,y;t, s^0)\right| \le
\frac {C\,\delta^{\varepsilon}\,{\cal R}^{\lambda^+-2}_{x}{\cal
R}^{\lambda^--2\varepsilon}_{y}\,|x'|^{\mu}} {(t-s)^{\frac
{n+2}2+\varepsilon}\,|y'|^{\mu}r_x^{\varepsilon}r_y^{\varepsilon^2}}
\, \exp\left(-\frac{\sigma|x-y|^2}{t- s} \right).
\end{equation*}

Thus, the kernels in (\ref{difference1}) and (\ref{difference2})
satisfy the assumptions of Lemma \ref{L_p_1}, respectively, with
$$\varkappa=\varepsilon,\quad r=2,\quad \varepsilon_1=0,\quad \varepsilon_2=\varepsilon^2,\quad
\lambda_1=\lambda^+-2,\quad \lambda_2=\lambda^--2\varepsilon,
$$
and
$$\varkappa=\varepsilon,\quad r=0,\quad \varepsilon_1=\varepsilon,\quad \varepsilon_2=\varepsilon^2,\quad
\lambda_1=\lambda^+-2,\quad \lambda_2=\lambda^--2\varepsilon.
$$
This completes the proof.
\end{proof}

\begin{sats}\label{Lpq}
Let $1<p, q<\infty$, and let $\mu$ be subject to (\ref{mu}).
Suppose also that $|x'|^\mu f\in L_{p,q}({\cal K}\times\mathbb R)$. Then the function {\rm (\ref{Sept12a})}
solves the problem {\rm (\ref{Jan1a})} and satisfies the estimate {\rm (\ref{beztilde})}.
\end{sats}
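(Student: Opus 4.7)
The plan is to reduce Theorem \ref{Lpq} to $L_{p,q}$-boundedness of two integral operators---${\cal G}$ with kernel $\frac{|x'|^{\mu-2}}{|y'|^\mu}\Gamma_{\cal K}$ and ${\cal G}_{ij}$ with kernel $\frac{|x'|^\mu}{|y'|^\mu}D_{x_i}D_{x_j}\Gamma_{\cal K}$---applied to $g(y;s)=|y'|^\mu f(y;s)$. These produce respectively $|x'|^{\mu-2}u$ and $|x'|^\mu D_iD_j u$; once they are bounded in $L_{p,q}$, the estimate for $|x'|^\mu\partial_t u$ follows from the equation ${\cal L}u=f$, condition (\ref{0.3}) and the bound on $|x'|^\mu D(Du)$. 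Unlike the $\widetilde L_{p,q}$-setting of Theorem \ref{tildeLpq}, here the time integration is the outer one, so Riesz--Thorin interpolation between $L_p$ and $\widetilde L_{p,\infty}$ (available via Proposition \ref{L_p}) is not directly applicable to the mixed norm, and another route is required.

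I would treat ${\cal G}$ and ${\cal G}_{ij}$ as vector-valued singular integral operators in the time variable, with values in the weighted $L_p$-space over $x$. The diagonal case $q=p$ is immediate from Theorem \ref{tildeLpq}, since $L_{p,p}=\widetilde L_{p,p}$. Lemma \ref{weak_2} supplies precisely the H\"ormander-type cancellation needed for the Calder\'on--Zygmund theory in time: for any $h$ supported in a layer $|s-s^0|\le\delta$ with $\int h(y;s)\,ds\equiv 0$, the integrals $\int_{|t-s^0|>2\delta}\|({\cal G}h)(\cdot;t)\|_p\,dt$ and $\int_{|t-s^0|>2\delta}\|({\cal G}_{ij}h)(\cdot;t)\|_p\,dt$ are dominated by $\|h\|_{p,1}$. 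Combining the $L_{p,p}$-endpoint from Theorem \ref{tildeLpq} with Lemma \ref{weak_2} via a standard parabolic Calder\'on--Zygmund decomposition of $L_p(\mathbb R^n)$-valued functions of $t$ yields a weak-$(1,1)$ bound in the time variable. Marcinkiewicz interpolation then gives the $L_{p,q}$-bound for $1<q\le p$, while the range $p<q<\infty$ is handled by a duality argument, using that the formal adjoints are of the same type with $\cal L$ replaced by $\widehat{\cal L}$ (see Remark \ref{Rem3} and (\ref{2Okt1a})), and that the admissible range (\ref{mu}) is invariant under the corresponding substitution $(p,q,\mu)\mapsto (p',q',\mu')$ with the interchange $\lambda_c^+\leftrightarrow\lambda_c^-$.

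The main obstacle I expect is the bookkeeping of the vector-valued Calder\'on--Zygmund decomposition in time: one has to split a function $g\in L_{p,q}({\cal K}\times\mathbb R)$ into ``good'' and ``bad'' pieces with respect to a Calder\'on--Zygmund level set on the real line, handle the good part through the $L_{p,p}$-endpoint, and handle the bad part---which consists of zero-mean-in-time pieces supported in short intervals---through the cancellation provided by Lemma \ref{weak_2}. The entire wedge-specific content (Gaussian decay, the weights $|x'|^\mu$ and $|x'|^{\mu-2}$, the behaviour near the edge and near the boundary) has already been absorbed into Theorem \ref{tildeLpq} and Lemma \ref{weak_2}, so no new analytic difficulty tied to the geometry of $\cal K$ should appear. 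The half-space version of this scheme is carried out in \cite{KN}, and the present argument would follow it essentially verbatim with Lemma \ref{weak_2} in place of its half-space analogue.
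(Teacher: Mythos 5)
Your proposal is correct and follows essentially the same route as the paper: the diagonal case $q=p$ from Theorem \ref{tildeLpq}, the cancellation estimate of Lemma \ref{weak_2} as a H\"ormander-type condition in the time variable, interpolation down to $1<q<p$, duality for $q>p$, and recovery of $|x'|^\mu\partial_t u$ from the equation. The only difference is that the paper invokes the ready-made vector-valued singular integral theorem \cite[Theorem 3.8]{BIN} at the step where you sketch its proof via the Calder\'on--Zygmund decomposition in $t$ and Marcinkiewicz interpolation.
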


\begin{proof}
As in Theorem \ref{tildeLpq}, it suffices to establish the estimate (\ref{beztilde}).

The estimate (\ref{tilde}) for $q=p$ provides boundedness of the
operators ${\cal G}$ and ${\cal G}_{ij}$ in $L_p(\mathbb
R^n\times \mathbb R)$, $1<p<\infty$, which gives the first condition
in \cite[Theorem 3.8]{BIN}. Lemma \ref{weak_2} is equivalent to the
second condition in this theorem. Therefore, Theorem 3.8 \cite{BIN}
ensures that these operators are bounded in $L_{p,q}(\mathbb R^n\times \mathbb R)$ for any $q\in\,(1,p)$.
For $q\in\,(p,\infty)$ this statement follows by duality arguments. This implies the
estimates of two last terms in (\ref{beztilde}). The estimate of the
first term follows now from (\ref{Jan1}).
\end{proof}

\begin{sats}\label{unique}
Under assumptions of Theorem \ref{Lpq} (respectively, Theorem \ref{tildeLpq}) the function {\rm (\ref{Sept12a})}
is a unique solution of the problem {\rm (\ref{Jan1a})} in the space $W^{2,1}_{p,q,(\mu)}$ (respectively,
$\widetilde {W}^{2,1}_{p,q,(\mu)}$).
\end{sats}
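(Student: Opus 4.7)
The plan is to prove uniqueness by a duality argument. Let $u\in W^{2,1}_{p,q,(\mu)}$ (respectively $\widetilde W^{2,1}_{p,q,(\mu)}$) satisfy ${\cal L}u=0$ in ${\cal K}\times\mathbb R$ with $u|_{\partial{\cal K}\times\mathbb R}=0$. The goal is to show that $\int u\,g\,dx\,dt=0$ for every $g\in{\cal C}_0^\infty({\cal K}\times\mathbb R)$, which forces $u\equiv0$. A key preliminary observation is that the admissible range (\ref{mu}) is self-dual: using $m/p+m/p'=m$ together with the identity $\lambda_c^\pm(\widehat{\cal L})=\lambda_c^\mp({\cal L})$, one checks that if $(\mu,p,q)$ satisfies (\ref{mu}) for ${\cal L}$, then $(2-\mu,p',q')$ satisfies the analogous condition for $\widehat{\cal L}$.

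For $g\in{\cal C}_0^\infty({\cal K}\times\mathbb R)$ I would set $\tilde g(x;t)=g(x;-t)$ and apply Theorem \ref{Lpq} (respectively, Theorem \ref{tildeLpq}) to the operator $\widehat{\cal L}$ with weight $2-\mu$ and conjugate exponents $p',q'$. This yields $\tilde v\in W^{2,1}_{p',q',(2-\mu)}$ (respectively, $\widetilde W^{2,1}_{p',q',(2-\mu)}$) solving $\widehat{\cal L}\tilde v=\tilde g$ with $\tilde v|_{\partial{\cal K}}=0$. After time reversal, $v(x;t):=\tilde v(x;-t)$ solves the adjoint equation ${\cal L}^*v=g$, where ${\cal L}^*=-\partial_t-A_{ij}(t)D_iD_j$, with zero edge data; by construction, $v$ possesses weighted integrability exactly dual to that of $u$.

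The next step is Green's formula on a truncated domain. Since $A_{ij}=A_{ji}$, the distributional identity
\[
u\,{\cal L}^*v-v\,{\cal L}u=-\partial_t(uv)-D_i\bigl(A_{ij}(u\,D_jv-v\,D_ju)\bigr)
\]
holds. I would multiply by $\chi_R(x;t)\psi_\varepsilon(x)$, where $\chi_R$ is a smooth outer cutoff equal to $1$ on $\{|x|^2+|t|\le R^2\}$ and supported in $\{|x|^2+|t|\le 4R^2\}$, and $\psi_\varepsilon$ is an edge cutoff vanishing on an $\varepsilon$-neighbourhood of $\partial{\cal K}$ and equal to $1$ outside a $2\varepsilon$-neighbourhood, then integrate over ${\cal K}\times\mathbb R$. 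The boundary contributions vanish (on $\partial{\cal K}$ by the Dirichlet conditions on $u$ and $v$, at infinity by the compact support of $\chi_R$), giving
\[
\int u\,g\,\chi_R\psi_\varepsilon\,dx\,dt=E_R+E_\varepsilon,
\]
where $E_R$ and $E_\varepsilon$ collect the commutator integrals involving the derivatives of $\chi_R$ and $\psi_\varepsilon$ paired with $u$, $Du$, $v$, $Dv$.

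The main obstacle will be showing that $E_R\to0$ as $R\to\infty$ and $E_\varepsilon\to0$ as $\varepsilon\to0$. The derivatives $\partial_t\chi_R$, $D\chi_R$, $D^2\chi_R$ scale like $R^{-2}$, $R^{-1}$, $R^{-2}$ and are supported in parabolic annular slabs where the weighted tails of $u$ and $v$ must vanish. H\"older's inequality with the matching weight pairs $|x'|^\mu\cdot|x'|^{-\mu}$ and $|x'|^{\mu-2}\cdot|x'|^{2-\mu}$, combined with the primal norm $\|u\|_{W^{2,1}_{p,q,(\mu)}}$ and the dual norm of $v$, yields $E_R\to0$. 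The edge term $E_\varepsilon$ is handled analogously, with the Hardy-type factors $\||x'|^{\mu-2}u\|_{p,q}$ and $\||x'|^{-\mu}v\|_{p',q'}$ absorbing the singularity at $\partial{\cal K}$ and dominated convergence finishing the job. Passing to the limit gives $\int u\,g\,dx\,dt=0$ for every test $g$, and hence $u\equiv0$.
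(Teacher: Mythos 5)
Your overall strategy --- uniqueness via solvability of the adjoint problem in the dual weighted space --- is genuinely different from the paper's argument: the paper truncates the solution itself, writes ${\cal L}(\chi_R u)=f_R$, pairs this with the Green function to obtain $u(y;s)=\int_{-\infty}^s\int_{\cal K}\Gamma_{\cal K}(y,x;s,t)f_R(x;t)\,dx\,dt$, and passes to the limit $R\to\infty$ using Lemma \ref{gradient} and Theorem \ref{Lpq} (respectively Theorem \ref{tildeLpq}), so that every solution in the space is identified with the representation (\ref{Sept12a}). Your duality bookkeeping is correct: by (\ref{2Okt1a}) the adjoint problem corresponds to $\widehat{\cal L}$, whose critical exponents are $\lambda_c^{\mp}$, and condition (\ref{mu}) for $(\mu,p,q)$ is indeed equivalent to the analogous condition for $(2-\mu,p',q')$ and $\widehat{\cal L}$; the pairings $\partial_t u\cdot v$, $D(Du)\cdot v$, $u\cdot\partial_t v$, $u\cdot D(Dv)$ and the outer errors $E_R$ are controlled as you indicate, provided you also invoke the gradient estimate of Lemma \ref{gradient} for both $u$ and $v$.

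The gap is in the treatment of $E_\varepsilon$. The weights $|x'|^{\mu-2}$, $|x'|^{-\mu}$, $|x'|^{\mu-1}$ measure distance to the edge $\{x'=0\}$, not to $\partial{\cal K}$. If $\psi_\varepsilon$ vanishes on an $\varepsilon$-neighbourhood of the whole boundary $\partial{\cal K}$, then on the part of $\operatorname{supp}D\psi_\varepsilon$ near a lateral face of the wedge one has $|x'|\asymp \mathrm{const}$, so the $|x'|$-weights are inert and the factor $|D\psi_\varepsilon|\sim\varepsilon^{-1}$ must be compensated by the vanishing of $u$ and $v$ on $\partial{\cal K}$; this requires a Hardy-type inequality with respect to $d(x)=\operatorname{dist}(x,\partial{\cal K})$, hence local up-to-boundary $W^{2,1}$ regularity of $u$ and $v$ near smooth boundary points (available from the half-space results of Krylov and \cite{KN}, since ${\cal L}u=0$ and ${\cal L}^*v=g$ there with zero Dirichlet data), none of which follows from the weighted norms alone. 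If instead $\psi_\varepsilon$ cuts off only near the edge $\{|x'|\le\varepsilon\}$, your dominated-convergence argument does handle $E_\varepsilon$, but then Green's formula produces genuine surface integrals over the lateral boundary, whose vanishing again needs the zero traces together with integrability of $Du$, $Dv$ up to that boundary. Either variant can be repaired by inserting this local boundary-regularity step, but as written the claim that the $|x'|$-weighted Hardy factors ``absorb the singularity at $\partial{\cal K}$'' is not correct away from the edge --- and this is exactly the complication that the paper's representation-formula proof avoids.
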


\noindent {\it Proof}. Let $\zeta=\zeta(\tau)$ be a smooth function on $\mathbb R_+$, which is
equal to $1$ for $\tau<1/2$ and $0$ for $\tau>3/4$.
We put $\chi_R(x;t)=\zeta(|x-y|/R)\zeta(\sqrt{|t-s|}/R)$. Then equation
(\ref{Jan1a}) can be written as
\begin{equation}\label{Jan1a'}
{\cal L}(\chi_Ru)=f_R
\end{equation}
where
$$
f_R=\chi_Rf+u\partial_t\chi_R-uA_{ij}D_iD_j\chi_R-2A_{ij}D_iu\,D_j\chi_R.
$$
Multiplying  (\ref{Jan1a'}) by $\Gamma_{\cal K}(y,x;s,t)$ and integrating over
${\cal K}\times(-\infty,s)$, we obtain
\begin{equation}\label{int}
u(y;s)=\int\limits_{-\infty}^s\int\limits_{\cal K}f_R(x;t)\Gamma_{\cal K}(y,x;s,t)\,dxdt.
\end{equation}
Since $|x'|^{\mu}f$, $|x'|^{\mu-2}u$ and, by Lemma \ref{gradient}, $|x'|^{\mu-1}D_ju$ belong to
$L_{p,q}({\cal K}\times\mathbb R)$ (respectively, to  $\widetilde L_{p,q}({\cal K}\times\mathbb R)$), we have
$|x'|^{\mu}f_R\to |x'|^{\mu}f$ in $L_{p,q}({\cal K}\times\mathbb R)$ (respectively, in
$\widetilde L_{p,q}({\cal K}\times\mathbb R)$) as $R\to\infty$. Using Theorem \ref{Lpq} (respectively,
Theorem \ref{tildeLpq}) we can pass to the limit in (\ref{int}) as $R\to\infty$ and obtain
$$u(y;s)=\int\limits_{-\infty}^s\int\limits_{\cal K}\Gamma_{\cal K}(y,x;s,t)f(x;t)\,dxdt.\eqno\square
$$

\begin{Rem}
To deal with initial-boundary value problem {\rm (\ref{init})}, one can extend
$\Gamma_{\cal K}(x,y;t,s)$ and $f(s)$ by zero for $s<0$.
In this case all statements of this Section hold true if we define $\lambda^+_c$ and $\lambda^-_c$
as described in Remark \ref{init1} (taking into account Remark \ref{init2}).
\end{Rem}

\section{Solvability of linear and quasilinear Dirichlet problems}\label{solv}

Let $\Omega$ be a bounded domain in $\mathbb R^n$.
We assume that there exists an $(n-m)$-dimensional submanifold without boundary (``edge'')
${\cal M}\subset\partial\Omega$ satisfying the following condition: for every
point $x_0\in{\cal M}$ there exists a neighborhood $U(x_0)$ and a diffeomorphism
$\Psi_{(x_0)}\,:\,U(x_0)\to B_{ R_0}^n$ ($R_0\le1$ is a constant independent of $x_0$)
such that
\begin{enumerate}
\item $\Psi_{(x_0)}(U(x_0)\cap\Omega)=B_{ R_0}^n\cap {\cal K}$\\
(note that  ${\cal K}={\cal K}(x_0)$ may depend on $x_0$);

\item $\Psi_{(x_0)}(x_0)=0$;

\item $\Psi_{(x_0)}(U(x_0)\cap\partial\Omega)=B_{ R_0}^n\cap \partial{\cal K}(x_0)$;

\item $\Psi'_{(x_0)}(x_0)={\cal I}_n$;

\item the norms of the Jacobi matrices $\Psi'_{(x_0)}(x)$ and $(\Psi_{(x_0)})'(\Psi_{(x_0)}(x))$ are uniformly bounded with respect to $x_0\in{\cal M}$ and to $x\in U(x_0)$.

\end{enumerate}

\begin{Rem}
If $m=n$ then ${\cal M}$ is not an edge but a conical point. In this case the wedge ${\cal K}$ degenerates
into the cone $K$. We do not exclude this case though it requires some trivial changes in notation which will not be mentioned in what follows.
\end{Rem}

We introduce two scales of functional spaces: ${\mathbb L}_{p,q,(\mu)}(Q)$ and
$\widetilde{\mathbb L}_{p,q,(\mu)}(Q)$,
with norms
$$
{\pmb \|}f{\pmb \|}_{p,q,(\mu),Q}=
\Vert (\widehat d(x))^{\mu}f\Vert_{p,q,Q}=
\Big (\int\limits_0^T\Big (\int\limits_\Omega
(\widehat d(x))^{\mu p}|f(x;t)|^pdx\Big )^{q/p}dt\Big )^{1/q}
$$
and
$$
\pmb {|\!|\!|}f\pmb {|\!|\!|}_{p,q,(\mu),Q}=
|\!|\!| (\widehat d(x))^{\mu}f|\!|\!|_{p,q,Q}=
\Big (\int\limits_\Omega\Big
(\int\limits_0^T (\widehat d(x))^{\mu q}|f(x;t)|^qdt\Big )^{p/q}dx\Big)^{1/p}
$$
respectively, where $\widehat d(x)$ stands for the distance from
$x \in \Omega$ to ${\cal M}$. For $p=q$ these spaces coincide, and we write
${\mathbb L}_{p,(\mu)}(Q)$.

We denote by ${\mathbb W}^{2,1}_{p,q,(\mu)} (Q)$ and $\widetilde
{\mathbb W}^{2,1}_{p,q,(\mu)} (Q)$ the set of functions with the
finite norms
$$
\|u \|_{{\mathbb W}^{2,1}_{p,q,(\mu)} (Q)}={\pmb \|}\partial_tu {\pmb \|}_{p,q,(\mu),Q}
+{\pmb \|}D(Du) {\pmb \|} _{p,q, (\mu),Q}+{\pmb \|}u {\pmb \|} _{p,q, (\mu-2),Q}
$$
and
$$
\|u \|_{\widetilde{\mathbb W}^{2,1}_{p,q,(\mu)} (Q)}=\pmb{|\!|\!|} \partial_tu \pmb{|\!|\!|}_{p,q, (\mu),Q}
+\pmb{|\!|\!|} D(Du) \pmb{|\!|\!|}_{p,q, (\mu),Q}+\pmb{|\!|\!|} u \pmb{|\!|\!|}_{p,q, (\mu-2),Q}
$$
respectively, satisfying boundary condition $u|_{\partial'Q}=0$.

We say $\partial\Omega \in {\cal W}^2_{p,(\mu)}$ if $\partial\Omega\setminus{\cal M} \in W^2_{p,loc}$
and for any point $x_0\in{\cal M}$ the mappings $\widetilde\Psi_{(x_0)}(x)=\Psi_{(x_0)}(x)-(x-x_0)$ have
finite norms
$$\| (\widehat d(x))^{\mu}D(Du)\widetilde\Psi_{(x_0)}\|_{p,U(x_0)}+
\| (\widehat d(x))^{\mu-2}\widetilde\Psi_{(x_0)}\|_{p,U(x_0)};
$$
these norms are uniformly bounded with respect to $x_0$.

We set $\widehat\mu(p,q)=1-\frac{n}{p}-\frac{2}{q}$.

\subsection{Linear Dirichlet problem in bounded domains}\label{Sect5.1}

We consider the initial-boundary value problem
\begin{equation}\label{DesS4}
{\mathfrak L}u\equiv \partial_tu-a_{ij}(x;t)D_iD_ju+b_i(x;t)D_iu=f(x;t)
\ \ {\textup{in}} \ \ Q,\quad\ u|_{\partial'Q}=0,
\end{equation}
where the leading coefficients $a_{ij}\in {\cal C}(\overline{\Omega}\to L_\infty(0,T))$
satisfy assumptions $a_{ij}=a_{ji}$ and
$$\nu|\xi|^2\le a_{ij}(x;t)\xi_i\xi_j\le \nu^{-1}|\xi|^2, \qquad
\xi\in{\mathbb R}^n, \quad \nu=const>0.
$$

We denote by ${\cal L}_{x_0}$ the operator of the form (\ref{Jan1}) with frozen coefficients
$A_{ij}(t)=a_{ij}(x_0;t)$ and define the quantities
\begin{equation}\label{lambda-c}
\widehat\lambda_c^{\pm}=\inf\limits_{x_0\in{\cal M}}\lambda_c^{\pm}({\cal K}(x_0),{\cal L}_{x_0}).
\end{equation}

\begin{sats}\label{linear}
Let $1<p,q<\infty$ and $2-\frac mp-\widehat\lambda_c^+<\mu<m-\frac mp+\widehat\lambda_c^-$.
\medskip

{\bf 1}. Let $ b_i \in {\mathbb L}_{{\overline{p}},{\overline{q}},(\overline{\mu})}(Q)$ where
$\displaystyle {\overline{p}}$ and $\displaystyle {\overline{q}}$
are subject to
$$\overline{p}\ge p;\quad \left[
\begin{array}{ll}
\overline{q}=q;&
\widehat\mu(\overline{p},\overline{q})>0\\
q<\overline{q}<\infty;&
\widehat\mu(\overline{p},\overline{q})=0
\end{array}\right.,
$$
while $\overline {\mu}$ satisfies
\begin{equation}\label{mumu}
{\overline{\mu}}=\min\{\mu, \max\{\widehat\mu(p,q),0\}\}.
\end{equation}

Suppose also that $\partial\Omega \in {\cal W}^2_{\overline{p},(\overline{\mu})}\cap W^1_{\infty}$.
Then, for any $f \in {\mathbb L}_{p,q,(\mu)}(Q)$, the initial-boundary value problem {\rm (\ref{DesS4})}
has a unique solution $u\in {\mathbb W}^{2,1}_{p,q,(\mu)}(Q)$. Moreover, this solution satisfies
$$\|u \|_{{\mathbb W}^{2,1}_{p,q,(\mu)} (Q)} \le C {\pmb\|}f{\pmb \|}_{p,q,(\mu)},
$$
where $C$ does not depend on $f$.\medskip

{\bf 2}. Let $ b_i \in \widetilde{\mathbb L}_{{\overline{p}},{\overline{q}},(\overline{\mu})}(Q)$ where
$\displaystyle {\overline{p}}$ and $\displaystyle {\overline{q}}$ are subject to
$$\overline{q}\ge q;\quad \left[
\begin{array}{ll}
\overline{p}=p;&
\widehat\mu(\overline{p},\overline{q})>0\\
p<\overline{p}<\infty;&
\widehat\mu(\overline{p},\overline{q})=0
\end{array}\right.,
$$
while $\overline {\mu}$ satisfies (\ref{mumu}). Suppose also that $\partial\Omega$ satisfies the same
condition as in the part {\bf 1}. Then, for any $f \in \widetilde{\mathbb L}_{p,q,(\mu)}(Q)$, the
initial-boundary value problem {\rm (\ref{DesS4})} has a unique solution
$u\in\widetilde{\mathbb W}^{2,1}_{p,q,(\mu)}(Q)$. Moreover, this solution satisfies
$$\|u \|_{\widetilde{\mathbb W}^{2,1}_{p,q,(\mu)} (Q)} \le C \pmb{|\!|\!|} f \pmb{|\!|\!|}_{p,q,(\mu)},
$$
where $C$ does not depend on $f$.

\end{sats}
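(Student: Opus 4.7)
My plan is to first handle the case $b_i\equiv 0$ by a localization argument based on Theorem \ref{main}, and then to recover the full result by treating the drift $b_iD_iu$ as a lower-order perturbation. I describe the ${\mathbb L}_{p,q}$-scale (part 1); part 2 is parallel, with Theorem \ref{tildeLpq} replacing Theorem \ref{Lpq} at the model step.

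Fix a finite cover of $\overline{\Omega}$ of three types: (i) open sets $V_\alpha$ with $\overline{V_\alpha}\subset\Omega$; (ii) neighborhoods of points of the smooth part $\partial\Omega\setminus{\cal M}$; (iii) the edge neighborhoods $U(x_\alpha)$ supplied by the structural assumption on $\partial\Omega$. Take the radii so small that the oscillation of $a_{ij}(\cdot,t)$ is uniformly small on each piece, and let $\{\eta_\alpha\}$ be a subordinate partition of unity. For each $\eta_\alpha u$ I derive a weighted estimate: in case (i) from Krylov's interior theorem \cite{Kr2}; in case (ii) from the weighted half-space estimate of \cite{KN}; in case (iii) by transplanting $\eta_\alpha u$ to the model wedge ${\cal K}(x_\alpha)\times\mathbb R$ via $\Psi_{(x_\alpha)}$, freezing the leading coefficients at $x_\alpha$, and applying Theorem \ref{main} to the resulting operator ${\cal L}_{x_\alpha}$. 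By (\ref{lambda-c}), $\lambda_c^\pm({\cal K}(x_\alpha),{\cal L}_{x_\alpha})\ge\widehat\lambda_c^\pm$, so the admissibility condition (\ref{mu}) holds uniformly in $\alpha$ for the given $\mu$.

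Summation over $\alpha$ generates three kinds of error: (a) commutators $[\eta_\alpha,{\mathfrak L}]u$, involving only $u$ and $Du$, absorbed into ${\pmb\|}u{\pmb\|}_{p,q,(\mu-2),Q}$ after interpolating $\|Du\|$ between $\|u\|$ and $\|D(Du)\|$; (b) the frozen-coefficient defect $\bigl(a_{ij}(x,t)-a_{ij}(x_\alpha,t)\bigr)D_iD_ju$, absorbed on the left by choosing the radii small and exploiting continuity of $a_{ij}$ in $x$; (c) a curvature-type contribution from the nonlinearity of $\Psi_{(x_\alpha)}$, controlled exactly by the hypothesis $\partial\Omega\in{\cal W}^2_{\overline p,(\overline\mu)}$. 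The outcome is the a priori estimate
$$\|u\|_{{\mathbb W}^{2,1}_{p,q,(\mu)}(Q)}\le C\bigl({\pmb\|}f{\pmb\|}_{p,q,(\mu),Q}+{\pmb\|}u{\pmb\|}_{p,q,(\mu-2),Q}\bigr),$$
from which a standard time-partition/Gronwall step on $Q=\Omega\times(0,T)$ removes the last term. Solvability for $b_i\equiv 0$ then follows by approximation of $a_{ij}$ by coefficients smooth in $t$ combined with the uniform a priori bound, with the endpoint in the continuity step supplied by Theorem \ref{main} itself.

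To include the drift, write ${\mathfrak L}={\mathfrak L}_0+B$ with $Bu=b_iD_iu$. The precise conditions on $(\overline p,\overline q,\overline\mu)$ are tailored so that anisotropic weighted Sobolev embedding renders $B:{\mathbb W}^{2,1}_{p,q,(\mu)}(Q)\to{\mathbb L}_{p,q,(\mu)}(Q)$ bounded and compact: when $\widehat\mu(p,q)>0$, the space ${\mathbb W}^{2,1}_{p,q,(\mu)}(Q)$ embeds into a H\"older-type class and $Du$ gains integrability, so that H\"older's inequality against $b\in{\mathbb L}_{\overline p,\overline q,(\overline\mu)}(Q)$ closes; when $\widehat\mu(p,q)=0$ the embedding degenerates logarithmically and is compensated by permitting $\overline q>q$. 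The Fredholm alternative then reduces existence to uniqueness, which I obtain by representing a solution with $f=0$ through the Green function of the adjoint problem; this adjoint is governed by $\widehat{\cal L}$, to which Theorem \ref{main} applies with $\lambda_c^\pm$ interchanged, forcing $u\equiv 0$. The closed graph theorem yields the quantitative bound. The most delicate step is (c) of the localization: transforming the weight $\widehat d(x)^\mu$ under $\Psi_{(x_\alpha)}$ into $|x'|^\mu$ and showing that the error involving $\widetilde\Psi_{(x_\alpha)}D(Du)$ lies in the same weighted scale and is absorbable, rather than surviving as a genuine perturbation. The nonstandard class ${\cal W}^2_{\overline p,(\overline\mu)}$ is introduced in the hypothesis precisely to match the integrability of $D(Du)$ that Theorem \ref{main} delivers on the model wedge.
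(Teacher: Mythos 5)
Your proposal follows essentially the same route as the paper: partition of unity over interior, smooth‑boundary, and edge neighborhoods; local rectification and coefficient freezing reducing to the model problems in $\mathbb R^n\times\mathbb R$, $\mathbb R^n_+\times\mathbb R$, and ${\cal K}\times\mathbb R$ (the last supplied by Theorem~\ref{main}); absorption of commutators and frozen‑coefficient defects; and control of the drift via H\"older and anisotropic embedding, which is exactly what the conditions on $(\overline p,\overline q,\overline\mu)$ and ${\cal W}^2_{\overline p,(\overline\mu)}$ are designed for. The only wrinkle is your closing remark on uniqueness: representing a homogeneous solution through ``the Green function of the adjoint problem'' identified with $\widehat{\cal L}$ is not quite right here, since the formal adjoint of a variable‑coefficient non‑divergence operator involves derivatives of $a_{ij}$ and does not fit the model operator of Theorem~\ref{main}; uniqueness instead follows directly from the a priori estimate on short time slices (the Gronwall step you already invoke), so this detour should simply be dropped.
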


\begin{Rem}
 These assertions generalize {\rm \cite[Theorem 2.1']{Na1}}.
\end{Rem}

\begin{proof}
The standard scheme, see, e.g., \cite[Ch.IV, Sect. 9]{LSU},
including partition of unity, local rectifying of $\partial\Omega$
and coefficients freezing, reduces the proof to the coercive
estimates for the model problems to equation (\ref{Jan1}) in the
whole space, in the half-space and in the wedge ${\cal K}$. The
first two estimates were obtained in \cite[Theorem 1.1]{Kr} and
\cite[Theorems 1 and 4]{KN}. The last one is established in our
Theorems \ref{tildeLpq} and \ref{Lpq}. By the H\"older inequality
and the embedding theorems (see, e.g., \cite[Theorems 10.1 and
10.4]{BIN}), the assumptions on $b_i$ guarantee that the lower-order
terms in (\ref{DesS4}) belong to desired weighted spaces ${\mathbb
L}_{p,q,(\mu)}(Q)$ and $\widetilde{\mathbb L}_{p,q,(\mu)}(Q)$
respectively. By the same reasons, the requirements on
$\partial\Omega$ ensure the invariance of assumptions on $b_i$ under
rectifying of $\partial\Omega$ in the neighborhood of a smooth point
and under diffeomorphism $\Psi_{(x_0)}$ in the neighborhood of
$x_0\in{\cal M}$.
\end{proof}

\subsection{Quasilinear Dirichlet problem in bounded domains}\label{Sect5.2}

In this subsection we suppose, in addition to the assumptions 1--5 from the beginning of Section
\ref{solv}, that for any $x_0 \in {\cal M}$ corresponding wedge ${\cal K}(x_0)$ is acute that is
$K(x_0) \subset K^{\theta}$ where $\theta < \frac{\pi}{2}$ does not depend on $x_0$.\medskip

We consider the initial-boundary value problem
\begin{equation}\label{quasi}
\partial_tu-a_{ij}(x;t;u;Du)D_iD_ju + a(x;t;u;Du) = 0 \quad\mbox{in}\ \ Q,\qquad
u|_{\partial'Q}=0.
\end{equation}
We suppose that the first derivatives of the coefficients
$a_{ij}(x;t;z;\mathfrak p)$ with respect to $x$, $z$ and $\mathfrak
p$ are locally bounded and the following inequalities hold for all
$(x;t) \in Q$, $z \in {\mathbb R}^1$ and $\mathfrak p \in{\mathbb
R}^n$ with some positive $\nu$ and $\nu_1$:
\begin{equation}\label{quasi1}
\gathered \nu |\xi|^2  \leq a_{ij}(x;t;z;\mathfrak p)\xi_i
\xi_j \leq
\nu^{-1}|\xi|^2\qquad  \forall \xi \in {\mathbb R}^n, \\
|a(x;t;z;\mathfrak p)| \leq \nu_1 |\mathfrak p|^2+
b(x;t)|\mathfrak p|+\Phi(x;t),\\
\left|\frac {\partial a_{ij}(x;t;z;\mathfrak p)}{\partial
\mathfrak p}
\right|\leq \frac {\nu_1}{1+|\mathfrak p|},\\
\left|{\mathfrak p}\cdot \frac {\partial a_{ij}(x;t;z;\mathfrak
p)}{\partial z}+ \frac {\partial a_{ij}(x;t;z;\mathfrak
p)}{\partial x}\right|\le \nu_1|\mathfrak p|+\Phi_1(x;t).
\endgathered
\end{equation}

Similarly to the previous subsection, we denote by ${\cal L}_{x_0}$ the operator of the form (\ref{Jan1})
with frozen coefficients $A_{ij}(t)=a_{ij}(x_0;t;0;0)$ and define the quantity $\widehat\lambda_c^{\pm}$ by the
formula (\ref{lambda-c}).

\begin{sats}\label{quasilinear}
{\bf 1}. Let the following assumptions be satisfied:
\begin{description}
\item[{(i)}] $1<q\leq p<\infty$, \quad $\widehat{\mu}(p,q)>\max\{2-\frac{m}{p}-\widehat\lambda_c^+,0\}$;
\item[{(ii)}] $2-\frac{m}{p}-\widehat\lambda_c^+<\mu<\widehat{\mu}(p,q)$, \quad
$\partial\Omega \in {\cal W}^2_{p,(\mu)}$;
\item[{(iii)}] functions $a_{ij}$ and $a$ satisfy the structure conditions {\rm (\ref{quasi1})};
\item[{(iv)}] $b,\Phi\in{\mathbb L}_{p,q,(\mu)}(Q)$;
\item[{(v)}] $\Phi_1 \in {\mathbb L}_{p_1,q_1,(\mu_1)}(Q)$,\quad $q_1\leq
p_1<\infty$,\ \  $\widehat{\mu}(p_1,q_1)>\max\{\mu_1,0\}$;
\item[{(vi)}] $a(\cdot;z;\mathfrak p)$ is continuous w.r.t.
$(z,\mathfrak p)$ in the norm ${\pmb \|}\cdot{\pmb \|}_{p,q,(\mu),Q}$.
\end{description}
Then the problem {\rm (\ref{quasi})} has a solution $u\in {\mathbb W}^{2,1}_{p,q,(\mu)}(Q)$.
\medskip

{\bf 2}. Let the following assumptions be satisfied:
\begin{description}
\item[{(i)}] $1<p\leq q<\infty$, \quad $\widehat{\mu}(p,q)>\max\{2-\frac{m}{p}-\widehat\lambda_c^+,0\}$;
\item[{(ii)}] $2-\frac{m}{p}-\widehat\lambda_c^+<\mu<\widehat{\mu}(p,q)$, \quad
$\partial\Omega \in {\cal W}^2_{p,(\mu)}$;
\item[{(iii)}] functions $a_{ij}$ and $a$ satisfy the structure conditions {\rm (\ref{quasi1})};
\item[{(iv)}] $b,\Phi\in\widetilde{\mathbb L}_{p,q,(\mu)}(Q)$;
\item[{(v)}] $\Phi_1 \in \widetilde{\mathbb L}_{p_1,q_1,(\mu_1)}(Q)$,\quad
$p_1\leq q_1<\infty$,\ \
$\widehat{\mu}(p_1,q_1)>\max\{\mu_1,0\}$;
\item[{(vi)}] $a(\cdot;z;\mathfrak p)$ is continuous w.r.t. $(z,\mathfrak p)$ in
the norm $\pmb{|\!|\!|}\cdot\pmb{|\!|\!|}_{p,q,(\mu),Q}$.
\end{description}
Then the problem {\rm (\ref{quasi})} has a solution $u\in\widetilde{\mathbb W}^{2,1}_{p,q,(\mu)}(Q)$.
\end{sats}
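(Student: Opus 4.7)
The plan is to prove existence by the Leray--Schauder fixed-point theorem, using Theorem \ref{linear} as the source of linear a priori estimates. For a function $v$ in the target space $W := {\mathbb W}^{2,1}_{p,q,(\mu)}(Q)$ (respectively $\widetilde{\mathbb W}^{2,1}_{p,q,(\mu)}(Q)$), define $u = Tv$ as the unique solution of the frozen-coefficient problem
\begin{equation*}
\partial_t u - a_{ij}(x;t;v;Dv)D_iD_ju + b^v_i(x;t)D_iu = -\widetilde a(x;t;v;Dv) \quad\text{in } Q,\quad u|_{\partial'Q}=0,
\end{equation*}
where the drift $b^v_i$ is obtained from $a(x;t;v;Dv)$ by absorbing the quadratic part $\nu_1|Dv|^2$ through a standard splitting $\nu_1|Dv|^2 = \nu_1 D_iv \cdot D_iv$ (assigning one factor of $Dv$ to the drift), and $\widetilde a$ collects the remaining terms bounded by $b|Dv| + \Phi$. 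By assumption (iii)--(v), $b_i^v$ and $\widetilde a$ fall under the hypotheses of Theorem \ref{linear}, so $T\colon W \to W$ is well defined with a linear bound in terms of $\|v\|_W$.

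Next, I would verify that $T$ is continuous and compact. Continuity follows from assumption (vi) together with continuity of the leading coefficients with respect to $v$ in the embedding topology. Compactness rests on compact embeddings among weighted anisotropic Sobolev spaces on $Q$: the hypothesis $\widehat\mu(p,q)>\max\{2-\frac mp-\widehat\lambda_c^+,0\}$ ensures the embedding of $W$ into a space of continuous functions with a weighted modulus of continuity on $Du$, which is precisely what is needed for the nonlinear coefficients $a_{ij}(\cdot;v;Dv)$, $a(\cdot;v;Dv)$ to depend compactly on $v$.

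The core of the argument is the a priori estimate for all fixed points $u_\sigma$ of $\sigma T$, $\sigma \in [0,1]$. I would proceed in three layers. First, an $L_\infty$ bound for $u_\sigma$ follows from the parabolic Aleksandrov--Krylov maximum principle exploiting the quadratic growth bound $|a|\le \nu_1|\mathfrak p|^2 + b|\mathfrak p| + \Phi$ and the integrability of $b,\Phi$ from (iv). Second, weighted gradient control near the edge ${\cal M}$: here the acute-cone hypothesis enters in an essential way, for by statement 4 of Theorem \ref{lambda} it guarantees $\widehat\lambda_c^{\pm}>1$, so the barrier functions constructed in the proof of that statement (of the form $\widehat w \sim |x'|^{1+\gamma}$ with $\gamma>0$) are admissible supersolutions of the linearized operator near every point of ${\cal M}$, yielding pointwise majorants of the form $|u_\sigma(x;t)| \le C\,(\widehat d(x))^{1+\gamma}$ after straightening by $\Psi_{(x_0)}$. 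Third, having established these local pointwise bounds, Theorem \ref{linear} applied with the now-controlled lower-order terms provides the global weighted coercive estimate for $u_\sigma$ in $W$.

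The main obstacle is the second step, namely the weighted gradient bound at the edge: one must verify that the barriers from the proof of statement 4 of Theorem \ref{lambda} carry over under the bi-Lipschitz charts $\Psi_{(x_0)}$ uniformly in $x_0 \in {\cal M}$, which in turn requires $\partial\Omega \in {\cal W}^2_{p,(\mu)}$ together with the uniform acuteness of all local cones. Once this is achieved, the remaining steps are standard: a uniform bound on the set $\{u : u = \sigma Tu,\ \sigma\in[0,1]\}$ allows the Leray--Schauder theorem to produce a fixed point of $T$, which by construction solves (\ref{quasi}) in the desired weighted space. The tilde version is handled identically, with Theorem \ref{linear}.2 replacing Theorem \ref{linear}.1 and embedding theorems for $\widetilde{\mathbb L}_{p,q,(\mu)}$ taking the role of those for ${\mathbb L}_{p,q,(\mu)}$.
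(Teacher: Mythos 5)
Your overall strategy coincides with the paper's: Leray--Schauder, with solvability of the linearized problems supplied by Theorem \ref{linear} and with a priori estimates based on the Aleksandrov--Krylov maximum principle and barrier arguments (the paper simply cites \cite{Na1}, \cite{LU1}, \cite{AN} for these estimates, observing that they never use continuity of $a_{ij}$ in $t$, and uses statement {\bf 4} of Theorem \ref{lambda} only to note that $\widehat\lambda_c^+>1$ makes the interval for $\mu$ non-empty). However, your sketch of the a priori estimates has a genuine gap exactly where the difficulty lies. After the $L_\infty$ bound and the barrier bound $|u_\sigma(x;t)|\le C\,(\widehat d(x))^{1+\gamma}$ you assert that Theorem \ref{linear} "applied with the now-controlled lower-order terms" yields the coercive estimate. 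But the nonlinearity $a(x;t;u_\sigma;Du_\sigma)$ grows quadratically in the gradient, and neither the sup bound nor the pointwise decay of $u_\sigma$ gives any control of $|Du_\sigma|^2$ in ${\mathbb L}_{p,q,(\mu)}(Q)$; the barriers of statement {\bf 4} majorize $u$, not $Du$, so your "second layer" is not a gradient estimate at all. What is actually needed before the linear theory can be invoked is the full chain of quasilinear gradient estimates (boundary gradient bound, global gradient bound, H\"older estimate for the gradient) in the weighted setting near the edge --- precisely the material the paper imports from \cite{Na1}, \cite{LU1}, \cite{AN}. Your identification of "the main obstacle" as the uniformity of the barriers under the charts $\Psi_{(x_0)}$ misses this.

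Two further points. The construction of the map $T$ by "absorbing the quadratic part into a drift" is not legitimate as written: (\ref{quasi1}) provides only the growth bound $|a|\le\nu_1|\mathfrak p|^2+b|\mathfrak p|+\Phi$, not a structural decomposition of $a$ into $\nu_1 D_iv\cdot\mathfrak p_i$ plus a remainder dominated by $b|\mathfrak p|+\Phi$; moreover the resulting drift $\sim Dv$ would have to be verified to lie in ${\mathbb L}_{\overline p,\overline q,(\overline\mu)}(Q)$ uniformly along the homotopy, which you do not address. The standard scheme (cf.\ \cite[Ch.~V, Sect.~6]{LSU}) simply places all of $a(\cdot;v;Dv)$ in the right-hand side, which is admissible here because $\widehat\mu(p,q)>0$ makes $Dv$ bounded for $v$ in the relevant space. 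Finally, you never explain the orderings $q\le p$ in part {\bf 1} and $p\le q$ in part {\bf 2}: they are forced by the Aleksandrov--Krylov maximum principle, which is currently available only when the right-hand side lies in the scale with the stronger norm --- the very reason the two parts of the theorem are stated for different scales.
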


\begin{proof}
The proof by the Leray--Schauder principle is also rather
standard, see, \cite[Ch.V, Sect. 6]{LSU}. In the case when the leading
coefficients are continuous in $t$, these assertions were proved
in \cite[Theorem 1.1]{Na1}.

Note that by Theorem \ref{lambda}, statement {\bf 4}, the assumption $\theta < \pi /2$ implies
$\widehat\lambda_c^+ >1$. This guarantees that for sufficiently large $p$ and $q$
the interval for $\mu$ in {\bf(ii)} is non-empty. Further, since $\widehat{\mu}(p,q)>0$,
the assumption {\bf(ii)} implies $\partial\Omega\setminus{\cal M}\in{\cal C}^1_{loc}$. The solvability
of the corresponding linear problem follows from Theorem \ref{linear} while
required a priori estimates obtained in \cite{Na1}, see also \cite{LU1} and \cite{AN},
do not use continuity of $a_{ij}$ with respect to $t$.
\end{proof}

Note that in Theorem \ref{quasilinear} for $p>q$ we deal with ${\mathbb L}_{p,q,(\mu)}(Q)$ scale
while for $p<q$ we deal with $\widetilde{\mathbb L}_{p,q,(\mu)}(Q)$ scale. The reason is that all
a priori estimates for quasilinear equations are based on the Aleksandrov--Krylov
maximum principle. Up to now this statement is proved only if the right-hand side
of the equation belongs to the space with stronger norm, see \cite{N87} and \cite{Na1}.

\section{Appendix}\label{append}

\begin{lem}\label{int_est} Let
$\mu^2<\nu^2 \Big (\Lambda_{\cal D}+\frac {(m-2)^2}{4}\Big )$
and let $0<\kappa_1<\kappa_2\leq 1$. Then the estimate
\begin{multline}\label{May9c}
\int\limits_{{\cal Q}_{\kappa_1R}^K(0;t_0)}|x'|^{2\mu}|Du|^2dxdt+
\int\limits_{{\cal Q}_{\kappa_1R}^K(0;t_0)}|x'|^{2\mu-2}|u|^2dxdt\\
\leq CR^{2\mu-2}\int\limits_{{\cal Q}_{\kappa_2R}^K(0;t_0)}
|u|^2dxdt
\end{multline}
holds for any function $u\in {\cal V}({\cal Q}_R^K(0;t_0))$
satisfying (\ref{0.5}). The constant $C$ may depends on $\mu$,
$\kappa_1$ and $\kappa_2$.
\end{lem}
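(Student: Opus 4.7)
The plan is a weighted Caccioppoli-type energy estimate. I would test the equation ${\cal L}u=0$ against $|x'|^{2\mu}\eta^2u$, where $\eta\in C_0^\infty({\cal Q}_{\kappa_2R}^K(0;t_0))$ satisfies $\eta\equiv1$ on ${\cal Q}_{\kappa_1R}^K(0;t_0)$ and $|\partial_t\eta|+|D\eta|^2\le CR^{-2}$, and integrate over space-time. Integration by parts in $x$ is legitimate since $u|_{\partial{\cal K}}=0$, combined with an approximation truncating $\{|x'|<\delta\}$ and sending $\delta\to 0$ (allowed by the ${\cal V}_\loc$-regularity of $u$). The $\partial_tu$-piece produces a non-negative terminal integrand at $t=t_0$ plus a cutoff correction, and the elliptic piece produces (a) the coercive quadratic $\int A_{ij}D_iuD_ju|x'|^{2\mu}\eta^2$, (b) the cross term $2\mu\int A_{k\ell}D_ku\cdot u\cdot x'_\ell|x'|^{2\mu-2}\eta^2$ from differentiating the weight, and (c) cutoff terms absorbed via Young's inequality by $\varepsilon\int A_{ij}D_iuD_ju|x'|^{2\mu}\eta^2+CR^{-2}\int u^2|x'|^{2\mu}\chi_{\mathrm{supp}(D\eta,\partial_t\eta)}$.

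The decisive step --- and the one I expect to be the main obstacle --- is extracting coercivity from (a)+(b) with the \emph{sharp} constant $\nu^2(\Lambda_{\cal D}+(m-2)^2/4)$; a direct Cauchy--Schwarz plus Hardy argument would be suboptimal by a factor of four. To achieve the sharp constant I would complete the square. Fix a small $\varepsilon>0$, set $\gamma:=\mu/(1-\varepsilon)$, and $\zeta_i:=D_iu+\gamma|x'|^{-2}\widehat{x}'_iu$, where $\widehat{x}'_i=x'_i$ for $i\le m$ and zero otherwise. A direct computation yields
\begin{equation*}
(1-\varepsilon)A_{ij}D_iuD_ju+2\mu|x'|^{-2}uA_{k\ell}D_kux'_\ell=(1-\varepsilon)A_{ij}\zeta_i\zeta_j-\frac{\mu^2}{1-\varepsilon}|x'|^{-4}u^2\sum_{k,\ell\le m}A_{k\ell}x'_kx'_\ell,
\end{equation*}
and the last term is pointwise at most $\frac{\mu^2\nu^{-1}}{1-\varepsilon}|x'|^{-2}u^2$. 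The substitution $v:=|x'|^\gamma u$ identifies $\zeta_i=|x'|^{-\gamma}D_iv$, so $\int A_{ij}\zeta_i\zeta_j|x'|^{2\mu}\eta^2\ge\nu\int|D'v|^2|x'|^{2(\mu-\gamma)}\eta^2$ with $2(\mu-\gamma)=O(\varepsilon)\to0$. Apply the sharp Hardy inequality on the cone $K$,
\begin{equation*}
\int_K|D'\phi|^2\,dx'\ge\Big(\Lambda_{\cal D}+\tfrac{(m-2)^2}{4}\Big)\int_K\frac{\phi^2}{|x'|^2}\,dx',\qquad \phi|_{\partial K}=0,
\end{equation*}
slice-by-slice in $(x'',t)$ to $\phi:=v\eta$, using a Young split to peel off the cutoff error $v^2|D'\eta|^2$.

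Putting the identity, the square-completion, and the Hardy bound together yields
\begin{equation*}
\bigl[(1-\varepsilon)^2(1+\delta)^{-1}\nu^2\Lambda_{{\cal D},0}-\mu^2\bigr]\int u^2|x'|^{2\mu-2}\eta^2\le CR^{-2}\int u^2|x'|^{2\mu}\chi_{\mathrm{supp}(D\eta,\partial_t\eta)},
\end{equation*}
with $\Lambda_{{\cal D},0}:=\Lambda_{\cal D}+(m-2)^2/4$. The hypothesis $\mu^2<\nu^2\Lambda_{{\cal D},0}$ makes the bracketed coefficient positive for $\varepsilon,\delta$ small. Arranging the cutoff so that the $x'$-transition lies in the annulus $\{\kappa_1R\le|x'|\le\kappa_2R\}$ (where $|x'|^{2\mu}\sim R^{2\mu}$), together with routine estimates for the remaining cutoff contributions, reduces the right-hand side to $CR^{2\mu-2}\int_{{\cal Q}_{\kappa_2R}^K}u^2$; this controls $\int u^2|x'|^{2\mu-2}$ on ${\cal Q}_{\kappa_1R}^K$, and the companion gradient bound follows by reinserting the $L^2$ bound into the original energy identity. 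The crucial point is that only the square-completion combined with the weight substitution $v=|x'|^\gamma u$ reduces the weighted problem to the Hardy inequality on $K$ at weight-exponent zero, yielding precisely the threshold constant $\Lambda_{{\cal D},0}$ dictated by the hypothesis.
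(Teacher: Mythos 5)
Your core mechanism coincides with the paper's: test the equation with $|x'|^{2\mu}\times(\hbox{cutoff})^2\times u$, remove the dangerous cross term coming from differentiating the weight by completing the square, pass to $v=|x'|^{\mu}u$ so that the coercive part becomes an unweighted Dirichlet integral of $v$, and invoke the sharp Hardy inequality on $K$ with constant $\Lambda_{\cal D}+\frac{(m-2)^2}{4}$; this is exactly how the paper reaches the threshold $\mu^2<\nu^2\big(\Lambda_{\cal D}+\frac{(m-2)^2}{4}\big)$ (it completes the square \emph{exactly}, via the identity $A_{ij}D_iu\,D_j(\rho_\varepsilon^{2\mu}u)=A_{ij}\partial_i(\rho_\varepsilon^{-\mu}v)\partial_j(\rho_\varepsilon^{\mu}v)$ with the regularized weight $\rho_\varepsilon=(|x'|^2+\varepsilon)^{1/2}$ instead of truncating near the edge, and peels a small $\delta$ off the Hardy constant afterwards to retain the gradient term). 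Two remarks on your variant: your $(1-\varepsilon)$-split forces $\gamma=\mu/(1-\varepsilon)\neq\mu$, so after the substitution the Dirichlet integral of $v$ still carries the weight $|x'|^{2(\mu-\gamma)}$ with a nonzero exponent of order $\varepsilon$; the unweighted Hardy inequality you quote does not literally apply there, and you must either use the weighted Hardy inequality (whose sharp constant $\Lambda_{\cal D}+\frac{(m-2+\tau)^2}{4}$ is continuous in the exponent $\tau$) or complete the square exactly as in the paper. This point is fixable.

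The genuine gap is the final step, where you declare the remaining cutoff contributions ``routine'' and reduce $R^{-2}\int u^2|x'|^{2\mu}$ over the support of $(D\eta,\partial_t\eta)$ to $CR^{2\mu-2}\int u^2$. Only the $x'$-transition of $\eta$ can be placed in the annulus $\{\kappa_1R\le|x'|\le\kappa_2R\}$; the transitions in $x''$ and in $t$ unavoidably meet the edge $\{x'=0\}$, where for $\mu<0$ the factor $|x'|^{2\mu}$ is unbounded and cannot be majorized by $R^{2\mu}$, nor can that contribution be absorbed into the left-hand side, since $\eta$ is not bounded away from zero there. As the hypothesis $\mu^2<\nu^2\big(\Lambda_{\cal D}+\frac{(m-2)^2}{4}\big)$ allows negative $\mu$, your argument as written only covers $\mu\ge 0$. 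The paper closes precisely this gap: it first proves the intermediate estimate (\ref{May9e}), whose right-hand side deliberately keeps the weight $|x'|^{2\mu}|u|^2$, and then, for $\mu\le 0$, iterates (\ref{May9e}) over a nested family of cylinders, raising the weight exponent at each step (e.g.\ using $|x'|^{2\mu}\le (\kappa_2R)^{2-2s}|x'|^{2(\mu+s)-2}$ with $s>0$ small enough that the shifted exponents remain admissible) until the weight on the right becomes nonnegative and can be replaced by $R^{2\mu}$, yielding (\ref{May9c}). Some such bootstrapping device is needed in your proof as well.
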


\begin{proof}
Without loss of generality, we assume $t_0=0$. Further, since the estimate
(\ref{May9c}) depends only on $K$ and $\nu$, we can suppose $R=1$;
general case can be reduced to this one by dilation with respect to $x$ and $\sqrt{|t|}$.

We put $\rho_\varepsilon=(|x'|^2+\varepsilon)^{1/2}$, where
$\varepsilon$ is a small positive number. Let us fix two real
numbers $R_1$ and $R$ such that $0<R_1<R_2\leq 1$. We choose two
smooth cut-off functions: $\chi=\chi(x)$ supported in ${\cal B}_{R_2}^K$ and
equal to $1$ on ${\cal B}_{R_1}^K$ and $\zeta=\zeta(t)$ which is equal to
$1$ for $t>-R_1^2$ and to $0$ for $t<-R_2^2$. Then we have
\begin{eqnarray}\label{May9a}
&&\int\limits_{{\cal Q}_{R_2}^K} {\cal L}u\; \rho_\varepsilon^{2\mu}\zeta^2\chi^2
udx=\frac{1}{2}\int\limits_{{\cal B}_{R_2}^K}u^2(x;0)
\chi^2\rho_\varepsilon^{2\mu}|u|^2dx\nonumber\\
&&-\int\limits_{{\cal Q}_{R_2}^K}\zeta'\zeta\chi^2u^2dxdt
+\int\limits_{{\cal Q}_{R_2}^K}\zeta^2\chi^2A_{ij}\partial_{x_i}(\rho_\varepsilon^{-\mu}
v)\partial_{x_j}(\rho_\varepsilon^{\mu}v)dxdt\nonumber\\
&&+\int\limits_{{\cal Q}_{R_2}^K}\zeta^2A_{ij}\partial_{x_i}(\rho_\varepsilon^{-\mu}
v)\rho_\varepsilon^{\mu}v\partial_{x_j}\chi^2dxdt,
\end{eqnarray}
where $v=\rho_\varepsilon^\mu u$. Direct calculations together
with (\ref{0.3}) give
\begin{eqnarray}\label{Uk2}
&&\int\limits_{{\cal B}_{R_2}^K}\chi^2A_{ij}\partial_{x_i}(\rho_\varepsilon^{-\mu}
v)\partial_{x_j}(\rho_\varepsilon^{\mu}v)dx=\int\limits_{{\cal B}_{R_2}^K}\chi^2A_{ij}\partial_{x_i}
v\;\partial_{x_j}v\,dx\\
&&-\mu^2\int\limits_{{\cal B}_{R_2}^K}\rho_\varepsilon^{-4}\chi^2A_{ij}x_ix_j|v|^2\,dx\geq
\nu^{-1} \int\limits_{{\cal
K}}\chi^2(\nu^2|Dv|^2-\mu^2\rho_\varepsilon^{-2}|v|^2)\,dx.\nonumber
\end{eqnarray}
Using the Hardy inequality
$$
\int\limits_K|D_{x'}f|^2dx'\geq \Big (\frac{(m-2)^2}{4}+\Lambda_{\cal D}\Big
)\int\limits_K|x'|^{-2}|f|^2dx'
$$
and the identity
$$
\int\limits_{{\cal B}_{R_2}^K}\chi^2 |Dv|^2dx=\int\limits_{{\cal B}_{R_2}^K}\Big (|D(\chi v)|^2+\chi\Delta\chi\,v^2 \Big )dx,
$$
we obtain
\begin{equation}\label{Uk3}
\int\limits_{{\cal B}_{R_2}^K}\chi^2|Dv|^2dx\geq \Big (\frac{(m-2)^2}{4}+\Lambda_{\cal D}\Big
)\int\limits_{{\cal B}_{R_2}^K}|x'|^{-2}\chi^2v^2dx-c\int\limits_{{\cal B}_{R_2}^K}v^2dx.
\end{equation}
 Since ${\cal L}u=0$ in ${\cal Q}_1^K$ then using (\ref{Uk2}) and (\ref{Uk3}), we
derive from (\ref{May9a}) the following estimate
\begin{eqnarray*}
&&0\geq \frac{1}{2}\int\limits_{{\cal B}_{R_2}^K}u^2(x;0)
\chi^2\rho_\varepsilon^{2\mu}|u|^2dx-\!\int\limits_{{\cal Q}_{R_2}^K}\!\rho_\varepsilon^{2\mu}|u|^2dxdt
+\delta\!\int\limits_{{\cal Q}_{R_2}^K}\!\chi^2|Du|^2dxdt\\
&&+c_\mu\int\limits_{{\cal Q}_{R_2}^K}\zeta^2\chi^2 v^2\rho_\varepsilon^{-2}dxdt
-\!\int\limits_{{\cal Q}_{R_2}^K}\!\zeta^2A_{ij}u^2
\partial_{x_i}\big (\rho_\varepsilon^{2\mu}\chi\partial_{x_j}\chi(x)\big
)dxdt\,,
\end{eqnarray*}
where $c_\mu\equiv(\nu -\delta)\Big (\frac{(m-2)^2}{4}+\Lambda_{\cal D}\Big
)-\nu^{-1}\mu^2$ is positive if $\delta>0$ is sufficiently small.
 Here we performed a partial integration in
the last integral in (\ref{May9a}). Taking the limit in the last
inequality as $\varepsilon\to 0$, we get
\begin{equation}\label{May9e}
\int\limits_{{\cal Q}_{R_1}^K}\big (|x'|^{2\mu}|Du|^2+ |x'|^{2\mu-2}|u|^2\big )dxdt\leq
C\int\limits_{{\cal Q}_{R_2}^K}|x'|^{2\mu}|u|^2dxdt,
\end{equation}
where $C$ depends on $\mu$. If $\mu$ is positive then taking $R_1=\kappa_1$ and
 $R_2=\kappa_2$ in (\ref{May9e}), we obtain
\begin{equation}\label{May9ct}
\int\limits_{{\cal Q}_{\kappa_1}^K}|x'|^{2\mu-2}|u|^2dxdt\leq
C(\mu)\int\limits_{{\cal Q}_{\kappa_2}^K} |u|^2dxdt.
\end{equation}
Otherwise, iterations of (\ref{May9e}), without the first term in the
left-hand side, again lead to (\ref{May9ct}). The estimate of the first
term in the left-hand side of (\ref{May9c}) follows from
(\ref{May9e}) and (\ref{May9ct}).
\end{proof}

\begin{lem}\label{gradient}
Let $1<p,q<\infty$ and $\mu\in\mathbb R$.
For any $u\in W^{2,1}_{p,q,(\mu)}$ (respectively, $u\in\widetilde {W}^{2,1}_{p,q,(\mu)}$) the following
estimate holds:
\begin{equation}\label{estgrad}
\||x'|^{\mu-1} Du\|_{p,q} \le C \|u \|_{W^{2,1}_{p,q,(\mu)}};\qquad
|\!|\!||x'|^{\mu-1} Du|\!|\!|_{p,q}\le C\|u \|_{\widetilde{W}^{2,1}_{p,q,(\mu)}}.
\end{equation}
\end{lem}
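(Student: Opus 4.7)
The target is the weighted interpolation
\begin{equation}\label{plan-key}
\int_{\cal K}|x'|^{p(\mu-1)}|Du(x;t)|^p\,dx \le C\int_{\cal K}\big(|x'|^{p\mu}|D(Du)(x;t)|^p + |x'|^{p(\mu-2)}|u(x;t)|^p\big)\,dx,
\end{equation}
holding for a.e.\ $t$ and any sufficiently smooth $u$. From \eqref{plan-key} the first inequality in \eqref{estgrad} follows by raising to the $(q/p)$-th power and integrating in $t$, using $(a+b)^{q/p}\le 2^{q/p}(a^{q/p}+b^{q/p})$; the second (tilde) inequality will follow from an $L^q_t$-valued analogue of \eqref{plan-key}, discussed at the end.

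To prove \eqref{plan-key} I would use a Whitney-type localization: fix a small constant $c=c(K)>0$ and construct a locally finite covering $\{B_i=B^n_{\rho_i}(x^{(i)})\}$ of ${\cal K}$ with $\rho_i=c|x'^{(i)}|$ and with bounded overlap of the enlargements $\{2B_i\}$. Since $\omega\in{\cal C}^{1,1}$, $c$ may be chosen so that each $2B_i\cap{\cal K}$ is, after rescaling by $\rho_i^{-1}$, a uniformly Lipschitz domain (near the lateral boundary of ${\cal K}$ this uses the ${\cal C}^{1,1}$-regularity of $\partial{\cal K}\setminus\{x'=0\}$); in particular, a uniform $W^{2,p}$-extension from $2B_i\cap{\cal K}$ to $2B_i$ is available. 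On each $B_i$ I would then apply the standard local Sobolev interpolation
\begin{equation*}
\|Du\|_{L^p(B_i\cap{\cal K})}\le C\rho_i\|D(Du)\|_{L^p(2B_i\cap{\cal K})}+C\rho_i^{-1}\|u\|_{L^p(2B_i\cap{\cal K})},
\end{equation*}
with constants independent of $i$.

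Next I would raise the local inequality to the $p$-th power, multiply by $|x'^{(i)}|^{p(\mu-1)}$, and use $|x'|\sim|x'^{(i)}|\sim\rho_i/c$ throughout $2B_i$, so that
$$
|x'^{(i)}|^{p(\mu-1)}\rho_i^p\sim|x'|^{p\mu},\qquad |x'^{(i)}|^{p(\mu-1)}\rho_i^{-p}\sim|x'|^{p(\mu-2)},
$$
obtaining \eqref{plan-key} localized to $B_i$. Summation over $i$, combined with the bounded overlap of $\{2B_i\}$, yields \eqref{plan-key}.

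For the $\widetilde L_{p,q}$ estimate I would repeat the same scheme but with $L^p$ of a function replaced by $L^p_x$ of its $L^q_t$-norm. The key point is that the local interpolation above admits a proof by integral representation of $Du$ in terms of $u$ and $D(Du)$ through convolution with scalar kernels supported on the ball (equivalently, via a bounded linear extension followed by a Fourier multiplier); this commutes with taking $L^q_t$-norms by Minkowski, yielding the mixed-norm version. The anticipated main obstacle is the uniformity of the $W^{2,p}$-extension across Whitney balls crossing $\partial{\cal K}\setminus\{x'=0\}$; after rescaling, however, this reduces to a Stein-type extension on a uniformly ${\cal C}^{1,1}$-perturbation of a half-ball, which is covered by the ${\cal C}^{1,1}$ assumption on $\omega$.
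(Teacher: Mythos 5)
Your proposal is correct and follows essentially the same route as the paper's proof: a decomposition of ${\cal K}$ into pieces on which $|x'|$ is comparable to a fixed scale (the paper uses the annular sets $\widetilde{\cal P}_{\rho,2}(\xi'')$ together with a partition of unity rather than Whitney balls with bounded overlap), the scaled local interpolation inequality on each piece, and summation that converts the local scale $\rho\sim|x'|$ into the weights $|x'|^{\mu-1}$, $|x'|^{\mu}$, $|x'|^{\mu-2}$. The only minor difference is in the mixed-norm case, where the paper simply invokes a local anisotropic embedding whose right-hand side also contains $\partial_t u$ (harmless, as it is part of the $\widetilde{W}^{2,1}_{p,q,(\mu)}$ norm), while you obtain a vector-valued version of the interpolation via an integral representation and Minkowski's inequality.
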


\begin{proof}
For any $\xi'' \in{\mathbb R}^{n-m}$ and $\rho>0$ the inequality
$$\int\limits_{\widetilde{\cal P}_{\rho,2}(\xi'')}
\rho^{-p}|Du|^p\, dx\le C\int\limits_{\widetilde{\cal P}_{2\rho,8}(\xi'')}
\left(\rho^{-2p}|u|^p+|D(Du)|^p\right)\, dx
$$
follows from standard embedding theorem (the set $\widetilde{\cal P}_{\rho,\gamma}(\xi'')$ is introduced in the proof of Theorem \ref{tildeLpq}).

Using a proper partition of unity in ${\cal K}$, we arrive at
$$\int\limits_{\cal K}|Du|^p|x'|^{\mu p-p} dx\le
C\int\limits_{\cal K}\left(|u|^p|x'|^{\mu p-2p} +|D(Du)|^p|x'|^{\mu p}\right)\, dx.
$$
This implies the first estimate in (\ref{estgrad}).\medskip

In a similar way, the embedding
\begin{multline*}
\int\limits_{\widetilde{\cal P}_{\rho,2}(\xi'')}\bigg( \int\limits_
{\mathbb R}\rho^{-q}|Du|^q \, dt\bigg)^{p/q} dx\\
\le C\int\limits_{\widetilde{\cal P}_{2\rho,8}(\xi'')}
\bigg( \int\limits_{\mathbb R}\left(|\partial_t u|^q+|D(Du)|^q+\rho^{-2q}|u|^q\right)\, dt\bigg)^{p/q} dx
\end{multline*}
implies the second estimate in (\ref{estgrad}).
\end{proof}

\begin{lem}\label{aux}
Let $\alpha>-1$, $\alpha+\beta>-m$,
$$F(z',w')\equiv\exp(-\sigma|z'-w'|^2)d(z')^{\alpha}|z'|^{\beta}(|z'|+1)^{\gamma}.
$$
Then
$${\cal I}\equiv\int\limits_{K}F(z',w')\,dz'\le
 C\,(|w'|+1)^{\alpha+\beta+\gamma},$$
where $C$ may depend on $\sigma$, $\alpha$, $\beta$, $\gamma$ and
$K$.
\end{lem}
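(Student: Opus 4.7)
The inequality is a standard Gaussian-against-polynomial-weight bound in a cone, and my plan combines Gaussian localization with a careful treatment of the boundary weight $d(z')^\alpha$. I would proceed in three steps.

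First, I would absorb the factor $(|z'|+1)^\gamma$ into the Gaussian. The elementary inequality $|z'|+1\le (|w'|+1)(1+|z'-w'|)$ gives
\[
(|z'|+1)^\gamma\le (|w'|+1)^{\gamma}\,(1+|z'-w'|)^{|\gamma|},
\]
and $(1+|z'-w'|)^{|\gamma|}\exp(-\sigma|z'-w'|^2)\le C\exp(-\sigma'|z'-w'|^2)$ for any $\sigma'<\sigma$. This reduces matters to proving the bound with $\gamma=0$ and a slightly smaller $\sigma$.

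Second, I would split the argument by the size of $|w'|$. For $|w'|\le 1$ the target estimate is $\mathcal I\le C$, so it suffices to prove uniform integrability. Split $K=(K\cap B^m_2)\cup (K\setminus B^m_2)$. On the outer part, $|z'-w'|\ge |z'|/2$ and the Gaussian provides arbitrary polynomial decay, which dominates $d(z')^\alpha|z'|^\beta$ and gives a finite contribution. On the inner part, passing to polar coordinates $z'=r\omega$ with $\omega\in\omega:=K\cap S^{m-1}$ and using $d(r\omega)=r\,\delta(\omega)$, where $\delta(\omega)$ is the spherical distance from $\omega$ to $\partial\omega$, separates the integral into
\[
\int_0^2 r^{\alpha+\beta+m-1}\,dr\cdot\int_{\omega}\delta(\omega)^\alpha\,d\omega;
\]
the first factor is finite thanks to $\alpha+\beta>-m$, and the second (involving the distance on $S^{m-1}$ to the smooth $(m-2)$-dimensional manifold $\partial\omega$) is finite under $\alpha>-1$.

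Third, for $|w'|>1$ I would write $\mathcal I=\mathcal I_1+\mathcal I_2$, where $\mathcal I_1$ is the integral over the near region $\{z'\in K:|z'-w'|\le |w'|/2\}$ and $\mathcal I_2$ over its complement. For $\mathcal I_2$, $|z'-w'|\ge |w'|/2$ permits the factorisation $\exp(-\sigma|z'-w'|^2)\le \exp(-\sigma|w'|^2/8)\,\exp(-\sigma|z'-w'|^2/2)$; the super-polynomial decay in $|w'|$ swallows any polynomial factor, and the remaining integral can be bounded as in Step~2. For $\mathcal I_1$, the inclusion $|w'|/2\le|z'|\le 3|w'|/2$ extracts the factor $|z'|^\beta\le C|w'|^\beta$, reducing the problem to the Gaussian integral of $d(z')^\alpha$ on the near region.

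The main obstacle is the estimate
\[
\int_{|z'-w'|\le |w'|/2}\exp(-\sigma|z'-w'|^2)\,d(z')^\alpha\,dz'\le C\,(|w'|+1)^{\alpha}
\]
when $\alpha<0$. For $\alpha\ge 0$ one simply uses $d(z')\le|z'|\le C|w'|$, so $d(z')^\alpha\le C|w'|^\alpha$, and the Gaussian integral is $O(1)$. For $\alpha\in(-1,0)$ the argument rests on the uniform slab estimate ${\rm mes}\{z'\in K\cap B^m_1(w'):d(z')<t\}\le Ct$ (valid because $\partial\omega\in{\cal C}^{1,1}$, so $K$ satisfies a uniform exterior-cone condition), combined with the fact that the Gaussian concentrates $z'$ on a unit ball around $w'$; on this ball, integration by slabs yields $\int d(z')^\alpha\,dz'\le C\sum_{k\ge 0}2^{-k(\alpha+1)}$, a convergent series precisely because $\alpha>-1$, and the resulting constant is then combined with $|z'|^\beta\le C|w'|^\beta$ to give the claimed bound.
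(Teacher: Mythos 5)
Your Steps 1 and 2, and the case $\alpha\ge 0$ of Step 3, are fine and run parallel to the paper's own argument (which also treats $|w'|\lesssim 1$ in polar coordinates, using $\alpha>-1$ for the angular integral and $\alpha+\beta>-m$ for the radial one, and kills the far region with the Gaussian). The genuine gap is exactly at what you call the main obstacle. For $\alpha\in(-1,0)$ the inequality
\[
\int_{\{|z'-w'|\le|w'|/2\}\cap K}\exp(-\sigma|z'-w'|^2)\,d(z')^{\alpha}\,dz'\ \le\ C\,(|w'|+1)^{\alpha}
\]
is not what your slab argument yields, and in fact it is false. The dyadic slab estimate on a unit ball around $w'$ gives only a bound by an absolute constant; nothing in your argument produces the decaying factor $(|w'|+1)^{\alpha}$, and your closing sentence multiplies this constant by $|w'|^{\beta}$, which gives $C(|w'|+1)^{\beta+\gamma}$ — short of the target by precisely $(|w'|+1)^{\alpha}$. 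This missing factor cannot be recovered by any refinement: take $\beta=\gamma=0$, $\alpha\in(-1,0)$, and $w'\in K$ with $d(w')=1$ and $|w'|=R$ large; then $B^m_{1/2}(w')\subset K$, on it $d(z')\le 3/2$, so the near-region integral is bounded below by $e^{-\sigma/4}(3/2)^{\alpha}\,\mathrm{mes}\,B^m_{1/2}>0$ uniformly in $R$, while the asserted bound $C(R+1)^{\alpha}$ tends to zero. So for negative $\alpha$ the conclusion of the lemma itself fails unless $d(w')\gtrsim |w'|$; the sharp near-diagonal contribution is of order $(d(w')+1)^{\alpha}(|w'|+1)^{\beta+\gamma}$, not $(|w'|+1)^{\alpha+\beta+\gamma}$.

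For what it is worth, the paper's own proof has the same soft spot, so your difficulty is intrinsic rather than a defect of your strategy alone: in the estimate of $I_1$ the paper bounds the integral of $r_z^{\alpha}(\Theta)$ over the layer $B_N^m(w')\setminus B_{N-1}^m(w')$ by $CN^n\int_\omega r_z^{\alpha}(\Theta)\,d\Theta$, which for $\alpha<0$ silently discards a factor of order $\rho^{-\alpha}$ with $\rho=|w'|$ (the layer has radial extent $\sim N$ but sits at radius $\sim\rho$, so in the worst case, $w'$ at bounded distance from $\partial K$, the layer integral is of size $N^{m+\alpha}\rho^{-\alpha}$, not $N^n$). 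Thus for $\alpha\ge 0$ both your argument and the paper's are correct, while for $\alpha\in(-1,0)$ no proof of the stated bound can exist; the statement (and its subsequent use in Lemma \ref{L_p_1}, where $\alpha=-\varepsilon_2 p'<0$) would have to be repaired, e.g.\ by replacing $(|w'|+1)^{\alpha}$ with $(d(w')+1)^{\alpha}$ for negative $\alpha$ and tracking that weaker bound through the application.
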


\begin{proof}
First, let $|w'|\le 2$. Then $|z'-w'|\ge \max\{|z'|-2,0\}$, and
$${\cal I}\le C\int\limits_{\omega}r_z^{\alpha}(\Theta)\,d\Theta
\int\limits_0^{\infty}\exp(-\sigma\cdot\max\{r-2,0\}^2)r^{\alpha+\beta+m-1}(r+1)^{\gamma}\,dr.
$$
We recall that $r_z(\Theta)=\frac{d(z')}{|z'|}\asymp\,\mbox{dist}(\Theta,\partial\omega)$.
Since $\alpha>-1$ and $\alpha+\beta>-m$, both integrals converge,
and we obtain ${\cal I}(w')\le C$.\medskip

Now let $|w'|\equiv\rho\ge 2$. Then we split ${\cal I}=I_1+I_2$,
where
$$ I_1=\int\limits_{K\cap B_{\frac{\rho}2}^m(w')}F(z',w')\,dz',\qquad
 I_2=\int\limits_{K\setminus B_{\frac{\rho}2}^m(w')}F(z',w')\,dz'.
$$

Since $|z'|\asymp \rho\asymp \rho+1$ in $B_{\frac{\rho}2}^m(w')$, we
obtain
$$I_1\le C\rho^{\alpha+\beta+\gamma}\int\limits_{K\cap B_{\frac{\rho}2}^m(w')}
\exp(-\sigma|z'-w'|^2)\ r_z^{\alpha}(\Theta)\,dz'.
$$
We split $B_{\lceil \frac{\rho}2\rceil}^m(w')$ into layers
$B_N^m(w')\setminus B_{N-1}^m(w')$, $N=1,\dots,\lceil
\frac{\rho}2\rceil$, and estimate as follows:
$$I_1\le C\rho^{\alpha+\beta+\gamma}\sum\limits_{N=1}^{\lceil \frac{\rho}2\rceil}
\exp(-\sigma(N-1)^2)N^n\cdot\int\limits_{\omega}r_z^{\alpha}(\Theta)\,d\Theta \le C\rho^{\alpha+\beta+\gamma}.
$$

To estimate $I_2$, we note that if $|z'|> \frac{3\rho}2$ then
$|z'-w'|\ge\frac{|z'|}3\ge\frac{\rho}4+\frac{|z'|}6$. If, otherwise,
$|z'|\le \frac{3\rho}2$ then $|z'-w'|\ge\frac{\rho}2$ implies again
$|z'-w'|\ge\frac{\rho}4+\frac{|z'|}6$. Thus,
\begin{multline*}
I_1\le C\exp\left(-\frac {\sigma\rho^2}{16}\right)
\int\limits_{\omega}r_z^{\alpha}(\Theta)\,d\Theta\\
\times\int\limits_0^{\infty}\exp\left(-\frac{\sigma
r^2}{36}\right)r^{\alpha+\beta+m-1}(r+1)^{\gamma}\,dr \le
C\exp\left(-\frac {\sigma\rho^2}{16}\right),
\end{multline*}
and the statement follows.
\end{proof}

The next lemma is a generalization of \cite[Lemma 10]{KN}, see also
\cite[Lemma 3.2]{Na}.
\begin{lem}\label{L_p_1}
Let $1<p<\infty$, $\sigma>0$, $\varkappa>0$, $0\le r\le 2$,
$0\le\varepsilon_1<\frac 1p$, $0\le\varepsilon_2<1-\frac 1p$,
${\lambda_1}+{\lambda_2}>-m$, and let $\mu$ satisfy
\begin{equation}\label{mu_m}
-\frac mp-\lambda_1<\mu<m-\frac mp+\lambda_2.
\end{equation}
 Also let the kernel ${\cal T}(x,y;t,s)$ satisfy the inequality
\begin{equation}\label{kernel_k1}
|{\cal T}(x,y;t,s)|  \le C\,\frac {\delta^\varkappa\,{\cal
R}_{x}^{\lambda_1+r} {\cal R}_{y}^{\lambda_2}\,|x'|^{\mu-r}}
{(t-s)^{\frac
{n+2-r}2+\varkappa}\,|y'|^{\mu}\,r_x^{\varepsilon_1}r_y^{\varepsilon_2}}
 \exp\left(-\frac{\sigma|x-y|^2}{t-s} \right),
\end{equation}
for $x,y\in{\cal K}$ and $t>s+\delta$. Then for any $s^0>0$ the norm
of the operator
$${\cal T}\ :\ L_{p,1}({\cal K}\times\ (s^0-\delta,s^0+\delta))\ \to \
L_{p,1}({\cal K}\times\ (s^0+2\delta,\infty))$$ does not exceed a
constant independent of $\delta$ and $s^0$.
\end{lem}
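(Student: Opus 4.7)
The plan is to reduce the $L_{p,1}\to L_{p,1}$ bound to a uniform operator-norm estimate for the ``slice'' operators $T_{t,s}$ (the integral operator on $\mathcal{K}$ with kernel $\mathcal{T}(\cdot,\cdot;t,s)$) and then to integrate in $t$. Since $h$ is supported in $|s-s^0|\le\delta$, by Minkowski's integral inequality and Fubini
$$\int_{s^0+2\delta}^{\infty}\|(\mathcal{T}h)(\cdot,t)\|_p\,dt \le \int_{s^0-\delta}^{s^0+\delta}\Big(\int_{s^0+2\delta}^{\infty}\|T_{t,s}\|_{L_p\to L_p}\,dt\Big)\|h(\cdot,s)\|_p\,ds,$$
so it suffices to bound the inner integral uniformly in $s\in(s^0-\delta,s^0+\delta)$, $s^0\in\mathbb R$ and $\delta>0$.

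The key pointwise estimate I aim for is $\|T_{t,s}\|_{L_p\to L_p}\le C\,\delta^{\varkappa}(t-s)^{-1-\varkappa}$; integrating this over $t\in(s^0+2\delta,\infty)$ and using $s^0+2\delta-s\ge\delta$ for $s$ in the relevant range gives the required uniform constant. To prove the pointwise estimate I rescale parabolically by $x=\sqrt{\tau}\,\tilde x$, $y=\sqrt{\tau}\,\tilde y$ with $\tau=t-s$. Since $|x'|+\sqrt{\tau}=\sqrt{\tau}(|\tilde x'|+1)$ and the ratios $r_x,r_y$ are scale-invariant, the $\tau$-powers in (\ref{kernel_k1}) collect into the single prefactor $\delta^{\varkappa}\tau^{-1-\varkappa}$, and the rest produces the ``scale-invariant'' kernel
$$\widetilde K_0(\tilde x,\tilde y)=\frac{|\tilde x'|^{\lambda_1+\mu}\,|\tilde y'|^{\lambda_2-\mu}}{(|\tilde x'|+1)^{\lambda_1+r}\,(|\tilde y'|+1)^{\lambda_2}\,r_{\tilde x}^{\varepsilon_1}\,r_{\tilde y}^{\varepsilon_2}}\,\exp\bigl(-\sigma|\tilde x-\tilde y|^2\bigr).$$
Hence it suffices to prove that the integral operator $T_0$ on $L_p(\mathcal{K})$ with kernel $\widetilde K_0$ is bounded.

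For this I apply the weighted Schur test with weights of the form $\phi(\tilde x)=|\tilde x'|^{\zeta}\,r_{\tilde x}^{-\varepsilon_1/p'}$ and $\psi(\tilde y)=|\tilde y'|^{\zeta'}\,r_{\tilde y}^{-\varepsilon_2/p}$, where $p'$ is the H\"older conjugate of $p$ and the exponents $\zeta,\zeta'\in\mathbb R$ are to be chosen. The $r$-exponents of $\phi,\psi$ are designed so that in each Schur inequality the total powers of $r_{\tilde x}$ (and $r_{\tilde y}$) cancel exactly; after the Gaussian integration over $\tilde y''$ (resp.\ $\tilde x''$), the remaining integrals over $K\subset\mathbb R^m$ match precisely the template of Lemma \ref{aux}. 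Its condition $\alpha>-1$ translates to $\varepsilon_1<1/p$ and $\varepsilon_2<1-1/p$, given. The condition $\alpha+\beta>-m$, combined with the demand that $\phi^{p'},\psi^{p}$ dominate the resulting bounds as $|\tilde x'|,|\tilde y'|\to 0$, reduces to the nonemptiness of certain intervals for $\zeta$ and $\zeta'$, which turns out to be equivalent to (\ref{mu_m}). The remaining constraint controlling the behaviour as $|\tilde x'|,|\tilde y'|\to\infty$ is of the form $\zeta-\zeta'\in[-r/p',\,r/p]$, compatible with the previous intervals thanks to the assumption $\lambda_1+\lambda_2>-m$.

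The main technical obstacle is to balance the small-$|\tilde x'|,|\tilde y'|$ and large-$|\tilde x'|,|\tilde y'|$ regimes of both Schur inequalities simultaneously; this is the step in which the sharpness of the admissible range (\ref{mu_m}) for $\mu$, together with $\lambda_1+\lambda_2>-m$ and the integrability conditions on $\varepsilon_1,\varepsilon_2$, becomes visible.
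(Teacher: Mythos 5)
Your proposal is correct and, at the level of ingredients, is the same proof as the paper's: both reduce the matter to a parabolic rescaling (making the $\tau=t-s$ powers collapse to $\tau^{-1-\varkappa}$, which integrates in $t$ to a constant independent of $\delta$) and both control the spatial operator via a Schur-type test whose two integrals are precisely instances of Lemma~\ref{aux}. Where you differ is purely in organization, and the difference is worth noting. The paper never separates out the slice operators $T_{t,s}$; it applies H\"older directly to the full kernel~(\ref{kernel_k1}) with a fixed split — the $x$-weights $|x'|^{\mu-r}{\cal R}_x^{\lambda_1+r}r_x^{-\varepsilon_1}$ go with $|h|^p$ into the $1/p$ factor, the $y$-weights ${\cal R}_y^{\lambda_2}|y'|^{-\mu}r_y^{-\varepsilon_2}$ go into the $1/p'$ factor — and then evaluates ${\cal I}_1$ and ${\cal I}_2$ (both via the rescaling and Lemma~\ref{aux}) inline, with one Minkowski step to pull the $s$-integral past the $L_p(dx)$-norm. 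You instead rescale first, isolate a $\tau$-independent kernel $\widetilde K_0$, and run an abstract two-weight Schur test with weights $|\tilde x'|^{\zeta}r_{\tilde x}^{-\varepsilon_1/p'}$, $|\tilde y'|^{\zeta'}r_{\tilde y}^{-\varepsilon_2/p}$. This costs a small amount of extra bookkeeping (you have to verify that the interval of admissible $(\zeta,\zeta')$ is nonempty), but it buys transparency: the role of the scaling becomes manifest, the $t$-integration is trivially decoupled, and the conditions on $\varepsilon_1,\varepsilon_2$, $\mu$, $\lambda_1+\lambda_2>-m$ appear cleanly as the Schur-solvability conditions. I checked your claimed reduction: the $|\tilde x'|\to 0$ and $|\tilde x'|\to\infty$ constraints give $\zeta\in(\frac{-m-\mu-\lambda_1}{p},\frac{\lambda_1+\mu}{p'}]$, the analogous ones for $\tilde y'$ give $\zeta'\in(\frac{\mu-\lambda_2-m}{p'},\frac{\lambda_2-\mu}{p}]$, and both are nonempty iff~(\ref{mu_m}) holds; the cross constraint $-r/p'\le\zeta-\zeta'\le r/p$ is then satisfiable (take $\zeta=\zeta'$) precisely because $\lambda_1+\lambda_2>-m$. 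So your sketch is sound, but a complete write-up would still need to carry out this arithmetic explicitly rather than asserting it, and to state the two-weight Schur lemma you are invoking.
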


\begin{proof}
Let $h\in L_{p, 1}$ be supported in the layer $|s-s^0|\le\delta$.
Using (\ref{kernel_k1}) and the H\"older inequality, we have
\begin{eqnarray}\label{1st}
|({\cal T}h)(x;t)|&\le& C\,\int\limits_{-\infty}^{t}\frac
{\delta^{\varkappa} ds} {(t-s)^{\varkappa+1-\frac r2}}\nonumber \\
&\times&\biggl(\ \int\limits_{\cal K} \exp
\left(-\frac{\sigma|x-y|^2} {t- s}\right)\frac {|x'|^{(\mu-r)
p}R^{(\lambda_1+r)p}_{x}\ |h(y; s)|^p} {r_x^{\varepsilon_1p}\,(t-
s)^{\frac n2}}\
dy\biggr)^{\frac{1}{p}}\nonumber\\
&\times&\biggl(\ \int\limits_{\cal K}\exp
\left(-\frac{\sigma|x-y|^2}{t- s} \right) \frac {{\cal
R}_y^{\lambda_2p'}}{|y'|^{\mu p'}r_y^{\varepsilon_2p'}\,(t-
s)^{\frac n2}}\ dy \biggr)^{\frac{1}{p'}}.
\end{eqnarray}
Denote by ${\cal I}_1$ the integral in the last large brackets.
Using the change of variable $x=w\sqrt{t- s}$, $y=z\sqrt{t- s}$,
and, in the case $m<n$, integrating with respect to $z''$, we obtain
$${\cal I}_1=C\int\limits_{K}\frac {\exp \left(-\sigma |z'-w'|^2 \right)
|z'|^{(\lambda_2-\mu+\varepsilon_2) p'}\, dz'} {(t-s)^{\mu
p'}d(z')^{\varepsilon_2p'}(|z'|+1)^{\lambda_2p'}}\leq C
\left(|x'|+\sqrt {t- s}\right)^{-\mu p'}\!.
$$
(the last inequality is by Lemma \ref{aux}).

From this estimate and (\ref{1st}), it follows that
\begin{eqnarray*}
&&\int\limits_{s^0+2\delta}^{\infty}\Vert ({\cal T}h)(\cdot;
t)\Vert_p\ dt\le C\,\int\limits_{s^0+2\delta}^{\infty} \biggl(\
\int\limits_ {\cal K}\biggl(\ \int\limits_{-\infty}^t \biggl(\
\int\limits_{\cal K}
\exp \left(-\frac {\sigma|x-y|^2}{t- s} \right)\\
&\times&\ \frac {|x'|^{(\lambda_1+\mu)p}|h(y; s)|^p\, dy}
{{\left(|x'|+\sqrt {t-
s}\right)^{(\lambda_1+\mu+r)p}}r_x^{\varepsilon_1p}\, ({t-
s})^{\frac n2}}\biggr)^{\frac{1}{p}}\frac {\delta^{\varkappa}\, ds}
{(t- s)^{\varkappa+1-\frac r2}}\biggr)^p dx\biggr)^{\frac{1}{p}}dt.
\end{eqnarray*}
Using Minkowski inequality, we estimate the right-hand side by
\begin{eqnarray*}
&&C\,\int\limits_{ s^0+2\delta}^{\infty}\int\limits_ { s^0-\delta}^
{ s^0+\delta}\ \frac {\delta^{\varkappa}\, ds dt}
{(t-s)^{\varkappa+1-\frac r2}}\,\biggl(\ \int\limits_{\cal K}
\int\limits_{\cal K}\exp \left(-\frac {\sigma|x-y|^2}{t- s} \right)\\
&\times&\, \frac {|x'|^{(\lambda_1+\mu)p}|h(y; s)|^p\, dy dx}
{\left(|x'|+\sqrt {t-
s}\right)^{(\lambda_1+\mu+r)p}r_x^{\varepsilon_1p}\,
({t-s})^{\frac n2}}\biggr)^{\frac{1}{p}}\\
&\le &C\,\int\limits_{s^0-\delta}^{ s^0+\delta} \| h(\cdot,s)\|_p\
ds \int\limits_ {s^0+2\delta}^{\infty}\frac {\delta^{\varkappa} dt}
{(t-s)^{\varkappa+1-\frac r2}} \cdot \sup\limits_y{\cal
I}_2^{\frac{1}{p}},
\end{eqnarray*}
where
$$
{\cal I}_2=\int\limits_{\cal K} \exp \left(-\frac
{\sigma|x-y|^2}{t-s} \right) \frac {|x'|^{(\lambda_1+\mu)p}\ dx}
{{\left(|x'|+\sqrt
{t-s}\right)^{(\lambda_1+\mu+r)p}}r_x^{\varepsilon_1p}\,
({t-s})^{\frac n2}}.
$$
We apply the change of variables $x=z\sqrt{t- s}$ and $y=w\sqrt{t-
s}$ and, in the case $m<n$, integrate with respect to $z''$. This
leads to
$${\cal I}_2= C\,\int\limits_{K}\frac
{\exp\left(-\sigma |z'-w'|^2 \right)
|z'|^{(\lambda_1+\mu+\varepsilon_1)p}\,dz'} {(t- s)^{\frac
{rp}2}d(z')^{\varepsilon_1p}(|z'|+1)^{(\lambda_1+\mu+r)p}}\le C\,(t-
s)^{-\frac {rp}2}
$$
(the last inequality is by Lemma \ref{aux}).

Thus,
$$\int\limits_{ s^0+2\delta}^{\infty}\Vert ({\cal K}h)(\cdot; t)\Vert_p\ dt
\le C\,\Vert h\Vert_{p,1}\, \sup\limits_{|s- s^0|<\delta}
\int\limits_ { s^0+2\delta}^{\infty}\frac {\delta^{\varkappa} dt}
{(t-s)^{1+\varkappa}}\le C\,\Vert h\Vert_{p,1},
$$
which completes the proof.
\end{proof}

The next proposition is proved in \cite[Lemmas A.1 and A.3 and Remark A.2]{KN}, see also
\cite[Lemmas 2.1 and 2.2]{Na}.

\begin{prop}\label{L_p}
Let $1\le p\le\infty$, $\sigma>0$, $0<r\le 2$, ${\lambda_1}+{\lambda_2}>-m$,
and let $\mu$ satisfy (\ref{mu_m}). Suppose also that the kernel ${\cal T}(x,y;t,s)$
satisfies the inequality
\begin{equation}\label{kernel_k}
|{\cal T}(x,y;t,s)|\le C\,\frac {R_{x}^{\lambda_1+r}
R_{y}^{\lambda_2}}{(t-s)^{\frac {n+2-r}2}}\,
\frac{|x'|^{\mu-r}}{|y'|^{\mu}}\,\exp
\left(-\frac{\sigma|x-y|^2}{t-s} \right),
\end{equation}
 for $t>s$. Then the integral operator ${\cal T}$ is bounded in
$L_p(\mathbb R^n\times\mathbb R)$ and in $\widetilde L_{p,\infty}(\mathbb R^n\times\mathbb R)$.
\end{prop}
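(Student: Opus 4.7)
\textbf{Proof plan for Proposition \ref{L_p}.}

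The plan is to prove both boundedness statements by a Schur--H\"older argument parallel to the proof of Lemma \ref{L_p_1} in the excerpt, but without the smoothing factor $\delta^\varkappa/(t-s)^\varkappa$. The central analytic tool is the Gaussian--weight integral of Lemma \ref{aux}, applied twice after rescaling.

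First, for the $L_p$ bound, I would start from $({\cal T}h)(x;t)=\int_{-\infty}^t\int_{\cal K}{\cal T}(x,y;t,s)\,h(y;s)\,dy\,ds$ and split the kernel via H\"older in $y$ into two Gaussian pieces with equal decay rates $\sigma/2$, placing the factor $|y'|^{-\mu}\,{\cal R}_y^{\lambda_2}$ into the dual piece. The dual integral
$$\mathcal{J}(x;t,s)=\int_{\cal K}\exp\!\left(-\tfrac{\sigma|x-y|^2}{t-s}\right)\frac{{\cal R}_y^{\lambda_2 p'}}{|y'|^{\mu p'}(t-s)^{n/2}}\,dy$$
is then evaluated by the substitution $y=w\sqrt{t-s}$ (first integrating out $y''$ over $\mathbb R^{n-m}$ when $m<n$), which reduces it to an instance of Lemma \ref{aux} with $\alpha=0$, $\beta=(\lambda_2-\mu)p'$, $\gamma=-\lambda_2 p'$. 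The condition $\alpha>-1$ is automatic, and $\alpha+\beta>-m$ is exactly the upper bound $\mu<m-m/p+\lambda_2$ in (\ref{mu_m}); the output is $\mathcal{J}(x;t,s)^{1/p'}\le C\,(|x'|+\sqrt{t-s})^{-\mu}$.

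Substituting back, raising to the $p$-th power, and applying Fubini in $(x,t)$, the remaining $x$-integral (again through $x=z\sqrt{t-s}$) is another instance of Lemma \ref{aux}, this time with exponents forcing the lower bound $\mu>-m/p-\lambda_1$; the hypothesis $\lambda_1+\lambda_2>-m$ guarantees that the two conditions on $\mu$ are mutually compatible. What survives is a one-dimensional integral in $\tau=t-s$ whose leading behaviour is $\tau^{-1+r/2}$ times a Gaussian factor, convergent precisely because $0<r\le 2$. The endpoint cases $p=1$ and $p=\infty$, where the H\"older split degenerates, are handled by direct $L^1$--$L^\infty$ estimates and duality.

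For the $\widetilde L_{p,\infty}$ bound the argument is simpler: given $h$ with $\|h(y,\cdot)\|_{L^\infty_t}\in L^p_y$, one pulls $\|h(y,\cdot)\|_{L^\infty_t}$ out of the $s$-integral and is reduced to proving that $x\mapsto\int_{-\infty}^t\!\int_{\cal K}|{\cal T}(x,y;t,s)|\,g(y)\,dy\,ds$ is bounded on $L^p_x$ uniformly in $t$. Carrying out the $s$-integration first using the Gaussian decay yields a kernel in $(x,y)$ of the form $|x'|^{\mu-r}|y'|^{-\mu}\,{\cal R}_x^{\lambda_1+r}{\cal R}_y^{\lambda_2}|x-y|^{-n+r}$ (up to harmless factors), whose $L^p_x\to L^p_x$ boundedness once again reduces to Lemma \ref{aux} via rescaling.

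The main obstacle is the careful bookkeeping of exponents so that the two successive applications of Lemma \ref{aux} produce matching powers of $(|x'|+\sqrt{t-s})$ which then cancel with the factor $|x'|^{\mu-r}/|y'|^\mu$ built into the kernel hypothesis (\ref{kernel_k}); this is exactly why the dilation exponent $r$ with $0<r\le 2$ is sharp and why the condition $\lambda_1+\lambda_2>-m$ is needed in addition to (\ref{mu_m}). Once the algebra of these exponents is correctly tracked, both estimates follow mechanically.
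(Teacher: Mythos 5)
Note first that the paper itself does not prove Proposition \ref{L_p}: it is imported from \cite[Lemmas A.1, A.3 and Remark A.2]{KN} and \cite[Lemmas 2.1, 2.2]{Na}, so the only in-house model is the proof of Lemma \ref{L_p_1}, which you imitate. That is precisely where your plan breaks down. In your scheme the dual ($y$-)integral is spent completely and yields $(|x'|+\sqrt{t-s})^{-\mu}$, the kernel leaves the factor $(t-s)^{-(1-r/2)}$ outside, and the second application of Lemma \ref{aux} gives $C(t-s)^{-r/2}$; the net time factor is therefore $(t-s)^{-1}$, and
$$\int_{-\infty}^{t}\frac{ds}{t-s}$$
diverges at both ends. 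No Gaussian factor survives the spatial integrations to help you: $\exp(-\sigma|x-y|^2/(t-s))\to 1$ as $t-s\to\infty$, so your assertion that the final one-dimensional $\tau$-integral converges ``because $0<r\le 2$'' is false. In Lemma \ref{L_p_1} convergence is rescued by the extra factor $\delta^{\varkappa}(t-s)^{-\varkappa}$, the restriction $t-s\ge 2\delta$, the support of $h$ in a $\delta$-layer and the $L_{p,1}$-in-time norms; none of these is available here. Nor can Minkowski in $s$ save the $L_p(dx\,dt)$ bound: for fixed $s$ the resulting profile behaves like $(t-s)^{-1}$, which is not in $L_p(dt)$ near $t=s$.

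The missing idea is that the decay in $t-s$ must come from the weight factors, i.e.\ one must not spend all of ${\cal R}_x^{\lambda_1+r}{\cal R}_y^{\lambda_2}|x'|^{\mu-r}|y'|^{-\mu}$ inside the spatial integrals. A Schur test performed jointly in $(y,s)$ with a power weight $w(y)=|y'|^{-c}$ does work: by Lemma \ref{aux} (after the scaling $y=z\sqrt{t-s}$ and integration in $y''$) one gets
$$\int\limits_{-\infty}^{t}\int\limits_{\cal K}|{\cal T}(x,y;t,s)|\,|y'|^{-cp'}\,dy\,ds
\le C\,|x'|^{\lambda_1+\mu}\int\limits_{0}^{\infty}\frac{\tau^{\frac r2-1}\,d\tau}{(|x'|+\sqrt{\tau})^{\lambda_1+\mu+cp'+r}}
\le C\,|x'|^{-cp'},$$
provided $-( \lambda_1+\mu)<cp'<m+\lambda_2-\mu$, the convergence of the $\tau$-integral at infinity being exactly the point you lose; the symmetric estimate over $(x,t)$ requires $\mu-\lambda_2<cp<m+\lambda_1+\mu$, and the four constraints on $c$ are compatible precisely under (\ref{mu_m}) together with $\lambda_1+\lambda_2>-m$ (with the endpoints $p=1,\infty$ obtained from the two one-sided estimates alone). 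This, rather than compatibility of the two bounds in (\ref{mu_m}), is where the hypothesis $\lambda_1+\lambda_2>-m$ really works. The same remark applies to your $\widetilde L_{p,\infty}$ sketch: ${\cal R}_x$ and ${\cal R}_y$ depend on $t-s$, so they cannot appear intact in an $s$-integrated kernel of the form $|x-y|^{r-n}$; it is exactly their decay for $\sqrt{t-s}\gg|x'|,|y'|$ that makes the $s$-integral finite, so they must be consumed there.
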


\vspace{6mm}

\noindent {\bf Acknowledgements.} V.~K. was supported by the
Swedish Research Council (VR). A.~N. was supported by Federal Scientific and Innovation Program
and by RFBR grant 09-01-00729. He also acknowledges the Link\"oping University for the financial
support of his visits in November 2009 and in February 2011.

\end{document}